\setlist{itemsep=1pt,parsep=0pt,topsep=2pt,partopsep=0pt} 
\def\itm#1{\rm ({#1})} 
\def\itmit#1{\itm{\it #1\,}} 
\def\abc{\itmit{\alph{*}}}
\def\endofClaim{\hfill\scalebox{.6}{$\Box$}}
\let\subset\subseteq  
\let\eps\varepsilon 
\let\rho\varrho 
\def\le{\leqslant}
\def\ge{\geqslant}
\def\leq{\leqslant}
\def\geq{\geqslant}
\def\sublem#1{\text{\tiny{L\ref{#1}}}}
 \newtheorem*{theorem*}{Theorem}
\newtheorem{theorem}{Theorem}%[section] 
\newtheorem*{thmMain}{Theorem~\ref{Thm:Main}}
\newtheorem{lemma}[theorem] {Lemma}
\newtheorem{claim}[theorem]{Claim} 
\theoremstyle{definition}
\theoremstyle{remark} 
\newcommand{\oldqed}{}
\newenvironment{claimproof}[1][Proof]{
  \renewcommand{\oldqed}{\qedsymbol}
  \renewcommand{\qedsymbol}{\endofClaim}
  \begin{proof}[#1]
}{
  \end{proof}
  \renewcommand{\qedsymbol}{\oldqed}
}
\newcommand{\By}[2]{\overset{\mbox{\tiny{#1}}}{#2}} 
\newcommand{\ByRef}[2]{   \By{\eqref{#1}}{#2} }
\newcommand{\leByRef}[1]{ \ByRef{#1}{\le} } 
\newcommand{\geByRef}[1]{ \ByRef{#1}{\ge} } 
\newcommand{\Prob}{\mathbb{P}}
\newcommand{\Exp}{\mathbb{E}}
\def\NN{\mathbb{N}}
\newcommand{\Bin}{\textup{Bin}}
\title{Triangle-free subgraphs of random graphs}
\author[P. Allen]{Peter Allen}
\author[J. B\"ottcher]{Julia B\"ottcher}
\author[Y. Kohayakawa]{Yoshiharu Kohayakawa}
\author[B. Roberts]{Barnaby Roberts}
  \address{
    Department of Mathematics, The London School of Economics and Political Science, Houghton Street,
    London WC2A 2AE, U.K.
  }
  \email{p.d.allen|j.boettcher|b.j.roberts@lse.ac.uk}
  \address{
   Instituto de Matem\'atica e Estat\'{\i}stica, Universidade de
    S\~ao Paulo, Rua do Mat\~ao 1010, 05508--090~S\~ao Paulo, Brazil.
  }
  \email{yoshi@ime.usp.br}
\thanks{%
  The third author was partially supported by FAPESP
  (2013/03447-6, 2013/07699-0), CNPq (477203/2012-4, 310974/2013-5
  and~459335/2014-6), NSF (DMS~1102086) and NUMEC/USP (Project
  MaCLinC/USP)}  
\begin{document}

\begin{abstract}
  % The Andr\'asfai--Erd\H{o}s--S\'os Theorem~\cite{AES} states that all
  % triangle-free graphs on $n$ vertices with minimum degree strictly greater
  % than $2n/5$ are bipartite. Thomassen~\cite{Thomassen02} proved that when
  % the minimum degree condition is relaxed to $(\frac13+\eps)n$, the result
  % is still guaranteed to be $r_\eps$-partite, where $r_\eps$ does not
  % depend on $n$. We prove best possible random graph analogues of these
  % theorems.
  Recently there has been much interest in studying random graph analogues
  of well known classical results in extremal graph theory. Here we follow
  this trend and
  investigate the structure of triangle-free subgraphs of $G(n,p)$ with
  high minimum degree. We prove that asymptotically almost surely each
  triangle-free spanning subgraph of $G(n,p)$ with minimum degree at
  least $\big(\frac{2}{5} + o(1)\big)pn$ is $\mathcal O(p^{-1}n)$-close to
  bipartite, and each spanning triangle-free subgraph of $G(n,p)$ with
  minimum degree at least $(\frac{1}{3}+\eps)pn$ is $\mathcal
  O(p^{-1}n)$-close to $r$-partite for some $r=r(\eps)$. These are random graph analogues of a
  result by Andr\'asfai, Erd\H{o}s, and S\'os~[Discrete Math. 8 (1974),
  205--218], and a result by Thomassen~[Combinatorica 22 (2002), 591--596].  We also
  show that our results are best possible up to a constant factor.
\end{abstract}

\maketitle

\section{Introduction}

In a 1948 edition of the recreational math journal Eureka, Blanche
Descartes proved that triangle-free graphs can have arbitrarily large
chromatic number, and thus be complex in structure. This motivates the
question of which additional restrictions on the class of triangle-free graphs
allow for a bound on the chromatic number.
By Mantel's theorem~\cite{Mantel}, the densest triangle-free graphs
are balanced complete bipartite graphs. So we may first ask whether triangle-free
graphs~$H$ with minimum degree somewhat below $\frac12 v(H)$ are still
necessarily bipartite. This is true, as Andr\'asfai,
Erd\H{o}s and S\'os showed in~1974.

\begin{theorem}[Andr\'asfai, Erd\H{o}s, S\'os~\cite{AES}]\label{Thm:AES}
  All triangle-free graphs~$H$ with $\delta(H)>\frac25 v(H)$ are bipartite.
\end{theorem}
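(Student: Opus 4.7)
The plan is to argue by contradiction. Suppose $H$ is triangle-free on $n$ vertices with $\delta(H) > \frac{2}{5}n$ but is not bipartite, and fix a shortest odd cycle $C = v_1 v_2 \cdots v_{2k+1}$ in $H$. Triangle-freeness forces $k \geq 2$, and minimality forces $C$ to be induced, since any chord would split $C$ into two shorter cycles, exactly one of them odd.

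For each $u \in V(H)$ put $d(u) := |N(u) \cap V(C)|$. The key geometric step is to pin down which pairs of vertices on $C$ can share a common neighbor off $C$. If $u \notin V(C)$ has two neighbors $v_i, v_j$ on $C$ at cyclic distance $r$ along $C$ (so $1 \leq r \leq k$), then the path $v_i u v_j$ together with the two arcs of $C$ joining $v_i$ to $v_j$ produces cycles of lengths $r+2$ and $2k+3-r$. A short case analysis kills every value except $r=2$: the value $r=1$ gives a triangle; odd $r \geq 3$ gives a shorter odd cycle of length $r+2 \leq k+2 < 2k+1$; and even $r \geq 4$ gives a shorter odd cycle of length $2k+3-r \leq 2k-1 < 2k+1$. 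Hence the neighbors of any $u \notin V(C)$ inside $V(C)$ are pairwise at cyclic distance exactly $2$.

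Now introduce the auxiliary graph on $V(C)$ with edges $\{v_i, v_{i+2}\}$ (indices modulo $2k+1$). Since $\gcd(2, 2k+1) = 1$, this auxiliary graph is itself a single cycle of length $2k+1 \geq 5$, hence triangle-free, so no three vertices of $C$ are pairwise at cyclic distance $2$. Combining this with the previous step gives $d(u) \leq 2$ for every $u \notin V(C)$, while $d(u) = 2$ for $u \in V(C)$ since $C$ is induced. The double count
\begin{equation*}
\sum_{i=1}^{2k+1} \deg(v_i) \;=\; \sum_{u \in V(H)} d(u) \;\leq\; 2(2k+1) + 2\bigl(n - (2k+1)\bigr) \;=\; 2n
\end{equation*}
then collides with the minimum-degree lower bound
\begin{equation*}
\sum_{i=1}^{2k+1} \deg(v_i) \;>\; (2k+1)\cdot\frac{2n}{5} \;\geq\; 2n,
\end{equation*}
delivering the desired contradiction.

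The main obstacle is the geometric step that pins down exactly which pairs on a shortest odd cycle can share an external common neighbor; once that is in hand, the double count is routine and, strikingly, tight precisely when $C$ is a $5$-cycle, which is where the constant $2/5$ enters.
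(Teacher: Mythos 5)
Your proof is correct and complete: the paper itself only cites Andr\'asfai--Erd\H{o}s--S\'os without proof, and your argument is the standard one for this theorem (double-counting edges incident to a shortest odd cycle, after showing that any vertex off the cycle has at most two neighbours on it because common neighbours must sit at cyclic distance exactly $2$). All the case analysis checks out, including the tightness at $2k+1=5$, which is where the constant $\frac25$ comes from.
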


Triangle-free graphs of smaller minimum degree do not need to be bipartite,
as blow-ups of a $5$-cycle illustrate. But one may still ask whether their
chromatic number is bounded (questions of this type were first addressed by
Erd\H{o}s and Simonovits in \cite{ErdSim}). In 2002 Thomassen~\cite{Thomassen02}
proved that this is the case for triangle-free graphs of minimum degree at
least $(\frac{1}{3}+\eps)n$.

\begin{theorem}[Thomassen~\cite{Thomassen02}]\label{Thm:Thomassen}
  For any $\eps>0$ there exists $r_\eps$ such that if $H$ is triangle-free
  and $\delta(H)>(\frac{1}{3}+\eps)v(H)$ then $H$ is $r_\eps$-colourable.
\end{theorem}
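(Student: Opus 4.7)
My plan is to produce, for each $\eps>0$, a triangle-free graph $F_\eps$ whose order depends only on $\eps$ together with a graph homomorphism $H \to F_\eps$; then $\chi(H) \le \chi(F_\eps) \le v(F_\eps) =: r_\eps$, as required. The starting local fact is that, in a triangle-free graph satisfying the hypothesis, adjacent vertices have very different neighbourhoods: for every edge $uv \in E(H)$ triangle-freeness forces $N(u) \cap N(v) = \emptyset$, hence $|N(u)\triangle N(v)| = |N(u)|+|N(v)| > (\tfrac{2}{3}+2\eps)\, v(H)$. Equivalently, a uniformly random vertex distinguishes the two endpoints of any given edge with probability exceeding $\tfrac{2}{3}+2\eps$.

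To build the homomorphism I would fix a \emph{reference set} $S = \{s_1,\ldots,s_k\} \subset V(H)$ and assign each vertex its \emph{type} $\tau_S(v) := (\mathbbm{1}[vs_i\in E(H)])_{i=1}^{k} \in \{0,1\}^k$. The types partition $V(H)$ into at most $2^k$ classes; if no class contains an $H$-edge then $\tau_S$ is a proper colouring using $2^k$ colours and we are done. The ``no-edge-in-a-class'' condition is precisely that $S$ is a separating system for the set system $\{N(u)\triangle N(v): uv\in E(H)\}$, each member of which has density greater than $\tfrac{2}{3}+2\eps$ by the local fact above. A naive random choice of $S$ of size $k = O_\eps(\log v(H))$ separates all edges, but this yields polynomial-in-$v(H)$ colours~--- useless for our purposes.

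The main obstacle is therefore to pin $k$ down to a function of $\eps$ alone. My plan is to combine reference-set refinement with Theorem~\ref{Thm:AES}. Starting from a modest random $S$, form the type partition; on each class $V_\tau$ with induced minimum degree exceeding $\tfrac{2}{5}|V_\tau|$ invoke the AES theorem to colour $H[V_\tau]$ with only two colours. For any class failing this AES condition the relative degree deficit supplies many internal ``distinguishing'' vertices; add one such vertex to $S$ and iterate. Termination in $O_\eps(1)$ rounds should follow from a potential argument on, for instance, $\sum_\tau e(H[V_\tau])/v(H)^2$, which one expects to decrease multiplicatively under each refinement. A cleaner alternative is a graph-limits (graphon) compactness route: from a hypothetical sequence of counterexamples one extracts a triangle-free limit graphon of essential degree at least $\tfrac{1}{3}+\eps$, and the classification of such graphons as blow-ups of finite triangle-free graphs (Andr\'asfai- or Vega-type graphs) directly provides the required $F_\eps$. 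In either approach, the hardest and quantitatively decisive step is the one that strips the $\log v(H)$ dependence from the bound on $k$.
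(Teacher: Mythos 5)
This is a result the paper quotes from Thomassen~\cite{Thomassen02} rather than proves, so I am judging your argument on its own terms. Your setup is sound and is genuinely in the spirit of known proofs (it resembles the ``types over a reference set'' / VC-dimension approach of \L{}uczak--Thomass\'e): the local fact $|N(u)\triangle N(v)|>(\tfrac23+2\eps)v(H)$ for every edge $uv$ is correct, the observation that a separating set $S$ yields a proper $2^{|S|}$-colouring is correct, and so is the $O_\eps(\log v(H))$ bound from a union bound. But you have correctly located, and then not closed, the only hard step: bounding $|S|$ by a function of $\eps$ alone. As written, neither of your two routes works.

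For the refinement route: a class $V_\tau$ failing the Andr\'asfai--Erd\H{o}s--S\'os condition only hands you a vertex $v$ with $\deg(v,V_\tau)\le\tfrac25|V_\tau|$. Refining by $N(v)$ splits off the (independent) set $N(v)\cap V_\tau$, but this set may be tiny and meet few of the edges of $H[V_\tau]$, so there is no reason the potential $\sum_\tau e(H[V_\tau])/v(H)^2$ drops by a constant factor per round; moreover the classes can have size $o(v(H))$, in which case the global minimum degree hypothesis says nothing about $\delta(H[V_\tau])/|V_\tau|$, so ``most'' classes fail the AES condition and you are in the unanalysed iteration generically. You would need a lemma producing, inside every non-independent class, a vertex whose neighbourhood covers a constant fraction of that class's edges --- that is essentially the content of the theorem and it is not supplied. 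The graphon route has a different, fatal problem: bounded chromatic number is not continuous in cut norm. A sequence of counterexamples can converge to a blow-up of a finite triangle-free graph while each term contains a sparse ($o(v(H)^2)$-edge) triangle-free subgraph of unbounded chromatic number hidden inside the parts; indeed this is exactly the phenomenon exploited by Theorem~\ref{Thm:Construction} of the paper, where an $O(p^{-1}n)$-edge perturbation destroys $r$-partiteness. So identifying the limit object gives no control on $\chi$ of the finite graphs. The proposal is therefore a reasonable plan with the decisive step missing, not a proof.
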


A construction of Hajnal (see~\cite{ErdSim}) shows that the minimum degree bound
in this theorem cannot be replaced by $(\frac{1}{3}-\eps)n$. A much
stronger result was established by Brandt and Thomass\'e~\cite{BraTho}, who
showed that triangle-free graphs~$H$ with $\delta(H)>\frac13n$ are $4$-colourable.

\medskip

In this paper we are interested in random graph analogues of
Theorem~\ref{Thm:AES} and Theorem~\ref{Thm:Thomassen}. Establishing such
analogues for prominent results in extremal graph theory has been a
particularly fruitful area of study in the last few years. A good overview can be found in Conlon's survey paper~\cite{ConICM}.

In order to study these kinds of questions systematically,
Kohayakawa~\cite{KohSparse} and R\"odl (unpublished) developed a sparse
analogue of Szemer\'edi's Regularity Lemma, and, together with
\L{}uczak~\cite{KLR} formulated the K\L{}R conjecture which asserts the
existence of a corresponding `counting lemma'. Recently Conlon, Samotij,
Schacht and Gowers~\cite{CGSS_KLR} proved this conjecture (see also~\cite{BalMorSam,SaxTho}). It is easy (as
observed in~\cite{CGSS_KLR}) to use these results to prove `approximate'
random versions of Theorems~\ref{Thm:AES} and~\ref{Thm:Thomassen}, as well
as to re-prove Mantel's theorem for random graphs. Thus if $p\gg n^{-1/2}$
then asymptotically almost surely (a.a.s.) the random graph $G(n,p)$ has the property that all
subgraphs with minimum degree a little larger than $\frac{2}{5}pn$ can be
made bipartite by deleting $o(pn^2)$ edges. Similarly, the sparse random
version of Mantel's theorem obtained states that any subgraph with a little
more than half the edges of $G(n,p)$ contains a triangle.

One might expect that all subgraphs of $G_{n,p}$
with minimum degree a little larger than $\frac25pn$ are
bipartite. Indeed, an alternative sparse random version of Mantel's
theorem, proved by DeMarco and Kahn~\cite{MarKah_triangle}, states that a
largest triangle-free subgraph of $G(n,p)$ coincides exactly with a largest
bipartite subgraph for $p\gg(\log
  n / n)^{1/2}$. However, subgraphs of $G(n,p)$ with minimum
degree larger than $\frac{2}{5}pn$ which are not bipartite do exist (see
Theorem~\ref{Thm:Construction} below). In this paper we determine for all
$p$ how far from bipartite such graphs can be.

\begin{theorem}\label{Thm:Main}
For any $\gamma>0$, there exists $C$ such that for any $p(n)$ the random graph $\Gamma =G(n,p)$ a.a.s.\ has the property that all triangle-free spanning subgraphs $H\subset\Gamma$ with $\delta(H)\geq (\frac{2}{5} + \gamma)pn$ can be made bipartite by removing at most $\min\big(C p^{-1}n,(\frac14+\gamma)pn^2\big)$ edges.
\end{theorem}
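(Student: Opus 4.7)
The plan is a two-stage argument: a \emph{stability step} using sparse regularity and the K\L{}R counting lemma, followed by a \emph{refinement} producing the tight $O(p^{-1}n)$ bound.

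\emph{Stage 1.} Apply the Kohayakawa--R\"odl sparse regularity lemma to $H\subset\Gamma=G(n,p)$ to obtain a reduced graph $R$ of minimum degree at least $\bigl(\tfrac25+\tfrac\gamma2\bigr)v(R)$. The K\L{}R counting lemma, established in~\cite{CGSS_KLR}, guarantees that any triangle in $R$ would lift to triangles in $H$, contradicting triangle-freeness; hence $R$ is triangle-free. Applying Theorem~\ref{Thm:AES} to $R$ forces $R$ to be bipartite, and pulling the bipartition back to $V(H)$ yields a bipartition $V(H)=A_0\cup B_0$ with at most $\eta pn^2$ non-crossing edges, for any prescribed $\eta=\eta(\gamma)>0$.

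\emph{Stage 2.} Starting from $(A_0,B_0)$, perform a standard max-cut improvement: iteratively move any vertex whose same-side $H$-degree exceeds its cross-side $H$-degree. This yields a bipartition $(A,B)$ in which $d_{\mathrm{other}}(v)\ge\tfrac12 d_H(v)\ge\bigl(\tfrac15+\tfrac\gamma2\bigr)pn$ for every $v$, while still $e(H[A])+e(H[B])\le\eta pn^2$. The central lever is that each bad edge $uv\in H[A]\cup H[B]$ forces $N_H(u)\cap N_H(v)=\emptyset$ (triangle-freeness), whereas typical $G(n,p)$ concentration shows $|N_\Gamma(u)\cap N_\Gamma(v)|$ is close to $p^2n$ for the vast majority of pairs. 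A careful local argument exploiting these two facts---preceded by a preliminary cleanup removing the $O_\gamma(p^{-1})$ atypical vertices of $\Gamma$, whose contribution to the bad-edge count can be absorbed into the $O(p^{-1}n)$ error---yields the bound $e(H[A])+e(H[B])=O_\gamma(p^{-1}n)$. The $p^{-1}$ scaling emerges naturally as the threshold at which the typical codegree structure of $G(n,p)$ forces a triangle unless $H$ is close to bipartite.

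The main obstacle is Stage~2. The random structure of $\Gamma$ guarantees concentration only for typical vertices, so one must carefully identify and treat the $O_\gamma(p^{-1})$ atypical ones separately. One must also handle the low-$p$ regime in which $Cp^{-1}n\ge\bigl(\tfrac14+\gamma\bigr)pn^2$: here the second argument of the $\min$ becomes active and is obtained by passing to a maximum bipartite subgraph of $H$, using the a.a.s.\ estimate $|H|\le\bigl(\tfrac12+o(1)\bigr)pn^2$. A subtle but important point is that the max-cut improvement must not move so many vertices that the Stage~1 partition is destroyed; monotonicity of the cut size guarantees convergence and near-preservation of the bipartition.
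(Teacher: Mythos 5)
Your Stage~1 matches the paper's first step exactly (sparse regularity, the K\L{}R triangle counting lemma to make the reduced graph triangle-free, Theorem~\ref{Thm:AES} to make it bipartite, giving a cut with at most $\eta pn^2$ non-crossing edges), and your treatment of the low-$p$ regime via a maximum bipartite subgraph is also what the paper does. The problem is Stage~2, which is where the entire content of the theorem lives and which you have not actually argued. You correctly isolate the two relevant facts --- a non-crossing edge $uv$ forces $N_H(u)\cap N_H(v)=\emptyset$ while $\deg_\Gamma(u,v,B)\approx p^2|B|$ for typical pairs --- but the naive combination of these does not give $O(p^{-1}n)$. What it gives is: for each non-crossing edge $uv$, one endpoint, say $u$, has lost at least $\tfrac12(1-\eps)p^2|B|$ of its $\Gamma$-neighbours in $B$ from the set $N_\Gamma(u,v,B)$. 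The difficulty is that a single vertex $u$ can be the ``losing'' endpoint of many non-crossing edges $uv_1,\dots,uv_k$, and the lost neighbours forced by different $v_i$ can all be the \emph{same} set $S=N_\Gamma(u,B)\setminus N_H(u,B)$ of size roughly $\tfrac12 p|B|$ --- which is perfectly compatible with the minimum-degree hypothesis. So no contradiction, and no bound on the number of non-crossing edges, follows from a purely local argument at each edge.

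The missing idea is a \emph{global} probabilistic statement about $G(n,p)$: if many leaves $v_1,\dots,v_s$ of a star centred at $u$ each have $\deg_\Gamma(v_i,S)\ge(1+\eps)qp^2|B|$ for a single set $S\subset N_\Gamma(u,B)$ with $|S|\le qp|B|$, then all the $v_i$ have atypically inflated degree into $S$, an event of probability $\exp(-\Omega(\eps^2 qp^2|B|s))$; one must then survive a union bound over the $2^{O(pn)}$ choices of $S$ and, crucially, over the $2^n$ choices of the set $B$ (which depends on the adversarial subgraph $H$). This forces one to consider $\Theta(p^{-1})$ \emph{vertex-disjoint} such stars with $s=\Theta(p^{-1})$ leaves each (the paper's Lemma~\ref{Lem:Stars}), to orient the non-crossing edges toward the endpoint that lost more common neighbours so that a vertex of high in-degree is the centre of such a star (with a bias toward the exceptional vertices in $\tilde X$ that send a constant fraction of their degree inside $X$, which otherwise break the minimum-degree contradiction), and finally to run a greedy disjoint-star extraction to convert ``fewer than $\tfrac12 p^{-1}$ disjoint bad stars'' into $e_{H'}(X)=O(p^{-1}n)$. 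None of this appears in your proposal; ``a careful local argument yields the bound'' is precisely the step that cannot be done locally. You also omit the size bounds $(\tfrac25+\tfrac12\gamma)n\le|X|,|Y|\le(\tfrac35-\tfrac12\gamma)n$ needed to turn a large deleted set $S$ into a violation of $\delta(H)\ge(\tfrac25+\gamma)pn$. As written, Stage~2 is a description of the desired conclusion rather than a proof of it.
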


In addition we derive an analogous random graph version of Theorem~\ref{Thm:Thomassen}.

\begin{theorem}\label{Thm:ChromaticThreshold}
For any $\gamma >0$, there exist $C$ and $r$ such that for any $p(n)$ the random graph $\Gamma = G(n,p)$ a.a.s.\ has the property
that all
triangle-free spanning subgraphs $H\subset \Gamma$ with $\delta (H)\geq (\frac{1}{3}+\gamma)pn$ can be made $r$-partite by removing at most $\min\big(Cp^{-1}n,(\frac{1}{2r}+\gamma)pn^2\big)$ edges.
\end{theorem}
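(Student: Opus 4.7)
The plan is to follow the template of Theorem~\ref{Thm:Main}, with Thomassen's theorem (Theorem~\ref{Thm:Thomassen}) replacing Andr\'asfai-Erd\H{o}s-S\'os (Theorem~\ref{Thm:AES}). I would split into two regimes depending on whether $p \le c n^{-1/2}$ or $p \ge c n^{-1/2}$ for a suitable $c=c(\gamma,r)>0$, corresponding to which term in the $\min$ is binding.

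In the sparse regime $p \le c n^{-1/2}$ the bound $(\tfrac{1}{2r}+\gamma)pn^2$ is dominant. Since a.a.s.\ $|E(H)| \le (\tfrac12+o(1))pn^2$, a uniformly random balanced $r$-partition of $V(H)$ leaves at most $(\tfrac{1}{2r}+o(1))pn^2$ internal edges in expectation, and a short first-moment or McDiarmid argument produces a deterministic partition attaining the claimed bound. The minimum degree hypothesis is essentially unused in this regime.

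For the dense regime $p \ge c n^{-1/2}$, I would apply the sparse regularity lemma of Kohayakawa-R\"odl to $H$ relative to $\Gamma$, producing a partition $V(H) = V_0 \cup V_1 \cup \dots \cup V_t$ with $|V_0| \le \eps n$ and with all but $\eps \binom{t}{2}$ pairs $\eps$-regular. Form the reduced graph $R$ on $[t]$ by joining $i\sim j$ iff $(V_i,V_j)$ is $\eps$-regular of relative density at least $\mu p$ in $H$, for a suitable $\mu=\mu(\gamma)>0$. The triangle-counting form of the K\L{}R conjecture established by Conlon, Gowers, Samotij and Schacht~\cite{CGSS_KLR} then forces $R$ to be triangle-free, and a routine deletion count shows $\delta(R) \ge (\tfrac13+\tfrac{\gamma}{2})t$. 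Thomassen's theorem applied to $R$ with parameter $\gamma/2$ yields a proper $r$-colouring $c:[t]\to[r]$ for some $r = r_{\gamma/2}$. Lifting this cluster-by-cluster and colouring $V_0$ arbitrarily already produces an $r$-partition of $V(H)$ with only $o(pn^2)$ internal edges.

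Sharpening this from $o(pn^2)$ down to $Cp^{-1}n$ is the main obstacle. Here I would run a stability argument mirroring the one used for Theorem~\ref{Thm:Main}: triangle-freeness of $H$ forces each neighbourhood $N_H(v)$ to be $H$-independent, and together with the $r$-colouring of $R$ and regularity most vertices have a canonical colour $c^\star(v)\in [r]$ — the class containing essentially none of $N_H(v)$. Using typical codegree concentration in $\Gamma = G(n,p)$ together with the triangle-free constraint, one shows that $c^\star(v)$ agrees with the cluster colour $c$ for all but $O(p^{-1})$ vertices; reassigning the misaligned vertices to $c^\star$ and performing a short greedy clean-up drives the internal edge count down to $O(p^{-1}n)$. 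The technical heart of this step is controlling the codegree deviations tightly enough that the reassignment strictly decreases the internal edge count while the $(\tfrac13+\gamma)pn$ minimum degree is preserved throughout, uniformly across all densities $p \gg n^{-1/2}$.
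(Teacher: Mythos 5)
Your sparse-regime argument ($p\le cn^{-1/2}$) matches the paper's and is fine. The dense regime, however, has a genuine gap, and it is precisely at the step you flag as "the main obstacle". First, your canonical colour $c^\star(v)$ is not well defined: for a general $r$-colouring produced by Thomassen's theorem applied to $R$ there is no reason that $N_H(v)$ essentially avoids a \emph{unique} colour class (this is special to the bipartite situation of Theorem~\ref{Thm:Main}, where the max cut supplies the canonical side). Second, and more fundamentally, any partition obtained by lifting a colouring of $R$ cluster-by-cluster cannot reach $O(p^{-1}n)$ internal edges, because sparse regularity says nothing about edges \emph{inside} a single cluster $V_i$: already $e_H(V_i)$ can be of order $p|V_i|^2=\Theta(pn^2/t^2)$, which for $p\gg n^{-1/2}$ vastly exceeds $Cp^{-1}n$. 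No reassignment of "misaligned" vertices keyed to the cluster colouring addresses this, so the clean-up step as described would fail.

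The paper's proof takes a different route and, notably, never invokes Thomassen's theorem. After applying the minimum-degree form of the sparse regularity lemma, it classifies each vertex $v$ by its neighbourhood profile $I(v)=\{i: \deg(v,V_i)>10dp|V_i|\}$, setting aside a small exceptional set $W$ of degree-atypical vertices, and takes the final partition to be $W$ together with the $2^t$ profile classes $N_I$ (so $r=2^{t_1}+1$). The minimum degree forces $|I|>t/3$ for every nonempty $N_I$. Then a dichotomy does the work: if $R[I]$ contains an edge $ij$, regularity inheritance (Lemma~\ref{Lem:RIpart}) plus triangle-freeness shows $|N_I|=O(\max(p^{-2},p^{-1}\log n))$, so $e_H(N_I)=O(p^{-1}n)$; if $R[I]$ is independent, then $|I|<2t/3$, the set $S_I=\bigcup_{i\in I}V_i$ has size at least $n/3$ and is disjoint from $N_I$, and the directed-edge/bad-in-star argument from the proof of Theorem~\ref{Thm:Main} (Lemma~\ref{Lem:Stars}) applied with $S_I$ in place of $Y$ bounds $e_H(N_I)$ by $O(p^{-1}n)$, the minimum degree $(\frac13+\gamma)pn$ entering exactly where $(\frac25+\gamma)pn$ did before. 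If you want to salvage your plan, you should replace "lift Thomassen's colouring and repair it" by this neighbourhood-profile decomposition; the in-star machinery you already have from Theorem~\ref{Thm:Main} then carries over.
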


Up to the values of $C$, these theorems are best possible.

\begin{theorem}\label{Thm:Construction}
For any $\gamma>0$ and $r\in \NN$, there exist constants $c,c'>0$ such that if $n^{-1/2}/c'\leq p(n) \leq c'$ then  $\Gamma =G(n,p)$ a.a.s\ has a triangle-free spanning subgraph $H$ with $\delta(H)\geq (\frac{1}{2} - \gamma)pn$ which cannot be made $r$-partite by removing fewer than $c p^{-1}n$ edges.
\end{theorem}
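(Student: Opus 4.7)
The plan is to construct $H$ as a small ``twist'' of a balanced bipartite subgraph of $\Gamma = G(n,p)$. Fix a balanced partition $[n] = A \cup B$ with $|A| = |B| = n/2$ and let $H_0 = \Gamma[A,B]$, which satisfies $\delta(H_0) = (1+o(1))pn/2$ a.a.s.\ by Chernoff. To introduce non-$r$-partiteness, pick a triangle-free $r$-chromatic graph $J$ of constant order (for example $J = K_2$ when $r = 2$, $J = C_5$ when $r = 3$, or a triangle-free Kneser/Mycielski graph for general $r$), partition $A$ into $|V(J)|$ equal classes $\{U_v\}_{v \in V(J)}$, and for each $vw \in E(J)$ extract a bipartite subgraph of $\Gamma[U_v, U_w]$ with maximum degree $\Theta(\gamma/p)$ on each side; standard concentration together with a K\"onig/Hall-style bounded-degree subgraph argument guarantees that such a subgraph exists a.a.s.\ throughout $n^{-1/2}/c' \leq p \leq c'$. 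The union $F \subset \Gamma[A]$ is triangle-free (inherited from $J$), has $\chi(F) = r$, maximum degree at most $\gamma/p$, and $|E(F)| = \Theta_r(n/p)$. Now form $H' = H_0 \cup F$ and destroy all triangles of $H'$: every triangle has the form $\{a_1, a_2, b\}$ with $a_1 a_2 \in F$, $a_1, a_2 \in A$, and $b \in B$ a common $\Gamma$-neighbour of $a_1, a_2$; delete one of the two incident $H_0$-edges per triangle. A union bound shows that each $a \in A$ loses at most $p^2 n \Delta(F)/2 \leq \gamma pn/2$ edges and each $b \in B$ loses at most $p^2|E(F)| = O(\gamma pn)$ edges, so the resulting $H$ is triangle-free with $\delta(H) \geq (\tfrac12 - \gamma)pn$ for constants chosen appropriately in terms of $\gamma$ and $r$.

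The heart of the argument is showing $H$ is $\Omega_r(n/p)$-far from $r$-partite. For an arbitrary $r$-partition $(X_1, \dots, X_r)$, set $a_i = |X_i \cap A|$ and $b_i = |X_i \cap B|$; the monochromatic edges decompose as $(1-o(1))p \sum_i a_i b_i$ from $H_0$ plus a contribution from $F$. If some $X_i$ is ``pure $B$'' (so $b_i > 0$ and $a_i = 0$), then $A$ lies in at most $r-1$ parts, and since $\chi(F) = r$ and $F$ is a $J$-blow-up, any such $(r-1)$-colouring of $F$ leaves a constant fraction of its edges monochromatic---controlled by the maximum fractional $(r-1)$-cut of $J$, which is strictly less than $1$ (for $J = C_5$ one gets exactly $|E(F)|/5 = \Omega_r(n/p)$ monochromatic $F$-edges). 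Otherwise every $B$-containing part also contains $A$; by choosing the $A$-partition to realise a proper $r$-colouring of the $J$-blow-up one drives the $F$-contribution to zero, but then the minimum class size of any proper $r$-colouring of $J$ forces $a_i = \Omega_r(n)$ for each $i$, so $\sum_i a_i b_i \geq (n/2) \min_i a_i = \Omega_r(n^2)$, giving $H_0$-mono $= \Omega_r(pn^2) \geq \Omega_r(n/p)$ via $p \geq n^{-1/2}/c'$. These two cases cover all $r$-partitions.

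The main obstacle is the $r$-partition case analysis, and in particular handling partitions that mix features of both cases above: one must show that if the partition has one or more pure-$B$ parts together with some parts genuinely mixing $A$ and $B$, then the $F$-monochromatic count forced by the residual $(r-1)$-colouring of $F$ already exceeds $\Omega_r(n/p)$, so no convex combination of the two extremes beats the bound. This requires a careful bookkeeping argument combining the $H_0$- and $F$-monochromatic contributions over the size-$r$ simplex of partitions. A secondary technical point is selecting, for each $r$, a triangle-free $r$-chromatic template $J$ whose maximum fractional $(r-1)$-cut is bounded below $1$ and whose proper $r$-colourings have classes of comparable sizes, both by constants depending only on $r$---this is explicit for $r = 2, 3$ (using $K_2$ and $C_5$) and is a routine calculation for general $r$ via Kneser or Mycielski constructions.
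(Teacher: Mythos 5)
Your overall architecture matches the paper's: a random bipartite backbone between two halves, a sparse triangle-free blow-up planted on one side, and a final pass deleting backbone edges that lie in triangles. But there is a genuine gap at the heart of the ``far from $r$-partite'' argument, namely in how $F$ is constructed. You extract the blow-up edges via a K\"onig/Hall-type bounded-degree subgraph argument, and then assert that any $(r-1)$-colouring of $F$ leaves a constant fraction of $E(F)$ monochromatic. That assertion is false for an arbitrary bounded-degree subgraph: the majority-colouring argument only hands you, for some $vw\in E(J)$, two subsets $U'\subset U_v$, $V'\subset U_w$ of size $\Omega_r(n)$ lying in a common colour class, and you then need $e_F(U',V')=\Omega(p^{-1}n)$. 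A union of $\Theta(\gamma/p)$ ``shift'' matchings between $U_v$ and $U_w$ has the right maximum degree and edge count but admits two linear-sized subsets with \emph{no} edges between them, so a Hall-type extraction gives no control here. The paper instead sparsifies $\Gamma$ on the structured side by keeping each edge independently with probability $p'=\Theta(p^{-2}n^{-1})$, and then a Chernoff--union-bound argument (Lemma~\ref{lem:G1nice}\ref{G1nice:nosparse}) shows every pair of disjoint sets of size $\Omega_r(n)$ spans more than $2cp^{-1}n$ edges. This quasirandomness of the planted sparse graph is the missing ingredient; without it the construction can be made $r$-partite cheaply.

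Two further points. First, your choice of a template with $\chi(J)=r$ (relying on $B$ to ``absorb'' one colour) is what creates the mixed-case bookkeeping you flag as the main obstacle: you must rule out partitions where classes contain small but nonzero amounts of both sides, and your dichotomy (``pure $B$'' versus ``every $B$-part meets $A$'') does not cover, say, a class with one vertex of $B$ and half of $A$. The paper sidesteps all of this by planting a triangle-free graph that is itself \emph{not} $r$-partite ($\chi\ge r+1$) inside one side and keeping the other side independent; then non-$r$-partiteness is a statement about $H[B]$ alone (Claim~\ref{clm:notpart}) and no interaction with the backbone is needed. Second, your concentration claim that each $b\in B$ loses only $O(\gamma pn)$ backbone edges is a sum of $\Theta(p^{-1}n)$ dependent indicators (pairs of edges from $b$ into $A$ hitting an $F$-edge) with dependency degree $\Delta(F)=\Theta(\gamma/p)$; ``a union bound'' does not suffice, and the paper handles this by revealing the edges in stages and counting, for fixed $\deg(b,B)$, the binomially distributed number of vertices on the other side that complete a triangle (Lemma~\ref{lem:G1nice}\ref{G1nice:triB}). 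Both issues are repairable, but the quasirandomness of $F$ is a missing idea rather than a missing calculation.
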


Note that for $p\ll n^{-1/2}$ the minimum in each of Theorems~\ref{Thm:Main} and~\ref{Thm:ChromaticThreshold} is achieved by the second term and that these statements are easy: For such values of $p$ only a tiny fraction of the edges of $G(n,p)$ are in triangles and the question reduces to asking for the largest bipartite (respectively, $r$-partite) subgraph of $G(n,p)$.
For $p$ close to $1$, by the original Theorems~\ref{Thm:AES}
and~\ref{Thm:Thomassen}, the conclusion of Theorem~\ref{Thm:Construction}
becomes false, so that we need the condition $p\leq c'$.

\medskip

It would be interesting to obtain analogous results for $K_r$-free subgraphs of $G(n,p)$ for $r>3$. It would also be interesting
to know whether Theorem~\ref{Thm:ChromaticThreshold} could be improved to generalise the result of Brandt and Thomass\'e.
We conjecture that this is the case.

\medskip

\paragraph{\bf Organisation}
In Section~2 we will introduce some of the main tools that will be used throughout the paper.
Section~3 of this paper will give a method of constructing a triangle-free subgraph from a given, randomly generated graph.  We will then prove a series of results about this construction which will result in proving Theorem~\ref{Thm:Construction}.  In Section~4 we will state and prove some properties that a.a.s.\ $\Gamma=G(n,p)$ possesses. We will then use these properties in Section~5 to prove Theorem~\ref{Thm:Main} and Theorem~\ref{Thm:ChromaticThreshold}.

\section{Tools}

\paragraph{\bf Notation}
% Most notation used will be standard.  
% The vertex set, $V(G)$, of our graphs   NOT ALWAYS.
We write $[n]$ for the set $\{1,...,n\}$, and the notation $x=(1\pm \eps)$ is used to mean $x \in [1-\eps, 1+\eps]$.  
  
In a graph~$G$ we say a vertex is a \emph{common neighbour} of a pair of
vertices if it is adjacent to both of them.  For disjoint
sets of vertices $X$ and
$Y$ in $G$ we will use $E_G(X,Y)$ to denote the set of edges between $X$
and $Y$ in $G$ and $E_G(X)$ to denote the set of edges of $G$ with both
ends in $X$.  We denote the sizes of these sets by $e_G(X,Y)$ and $e_G(X)$
respectively.  We will use $N_G(v,X)$ to denote the set of vertices in $X$
which are adjacent to a vertex~$v$ of~$G$ and $\deg_G(v,X)$ for the number
of vertices in $N_G(v,X)$.  For two vertices $u,v$ we will write
$N_G(u,v,X)$ for the common neighbourhood $N_G(u,X)\cap N_G(v,X)$ of~$u$
and~$v$ in~$X$, and $\deg_G(u,v,X)$ for its size.  For $X=V(G)$ we will simply use $N_G(v)$, $\deg_G(v)$ and
$N_G(u,v)$.  Often, when it is clear which graph is being referred to, we
also omit the subscripts.

Throughout the paper we shall omit floor and ceiling symbols when this does
not affect our argument.

\medskip

\paragraph{\bf Probability}

% We say that an event holds \emph{asymptotically almost surely} (a.a.s.)  if
% it holds with probability tending to~$1$.   USED ALREADY
We write $\Bin(n,p)$ for the binomial distribution with~$n$ trials and
success probability~$p$.
Our proofs we will make frequent use of the following Chernoff bound, which  
is an immediate corollary of~\cite[Theorem~2.1]{purpleBook}.

\begin{lemma}[Chernoff bound]\label{Chernoff}
  Let~$X$ be a random variable with distribution $\Bin(n,p)$ and
  $0<\delta<\frac{3}{2}$. Then
  \begin{equation}\label{eq:Chernoff}
    \Prob (X<(1-\delta)\Exp X)<\exp\big(\tfrac{-\delta ^2}{3}\Exp X\big)\quad\text{ and } \quad
    \Prob (X>(1+\delta)\Exp X)<\exp\big(\tfrac{-\delta ^2}{3}\Exp X\big)\,.
  \end{equation}
\end{lemma}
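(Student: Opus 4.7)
The plan is to deduce this from the classical Chernoff--Cram\'er exponential moment argument, which is precisely what is packaged in Theorem~2.1 of~\cite{purpleBook}. Writing $X = X_1 + \dots + X_n$ as a sum of independent Bernoulli$(p)$ variables, Markov's inequality applied to $e^{tX}$ for $t>0$ (respectively $t<0$), combined with the estimate $\Exp e^{tX} = (1-p+pe^t)^n \leq \exp(\mu(e^t-1))$ where $\mu = \Exp X = np$, and with the optimal choices $t = \ln(1\pm\delta)$, yields the standard two-sided bounds
\begin{equation*}
\Prob\bigl(X \geq (1+\delta)\mu\bigr) \leq \exp\bigl(-\mu\varphi(\delta)\bigr),\qquad
\Prob\bigl(X \leq (1-\delta)\mu\bigr) \leq \exp\bigl(-\mu\psi(\delta)\bigr),
\end{equation*}
with $\varphi(\delta) = (1+\delta)\ln(1+\delta)-\delta$ and $\psi(\delta) = (1-\delta)\ln(1-\delta)+\delta$.

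It then suffices to verify the elementary real-variable inequalities $\varphi(\delta) \geq \delta^2/3$ on $(0,3/2)$ and $\psi(\delta) \geq \delta^2/3$ on $(0,1)$ (only the latter range is meaningful for the lower tail, since the event $X < (1-\delta)\Exp X$ is empty for $\delta \geq 1$). For $\psi$ a Taylor expansion gives $\psi(\delta) = \sum_{k \geq 2} \delta^k / \bigl(k(k-1)\bigr) \geq \delta^2/2$, so the bound is immediate. For $\varphi$, set $f(\delta) = \varphi(\delta) - \delta^2/3$; then $f(0) = f'(0) = 0$ and $f''(\delta) = 1/(1+\delta) - 2/3$ vanishes only at $\delta = 1/2$, so $f$ is convex on $(0,1/2)$ and concave on $(1/2,3/2)$. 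Hence $f$ is non-negative throughout $(0,3/2)$ as soon as $f(3/2) \geq 0$, which is a direct numerical check.

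The only step needing any care is this final verification: the constant $1/3$ in the exponent of~\eqref{eq:Chernoff} is exactly what forces the admissible range $\delta < 3/2$, and the inequality $\varphi(3/2) \geq 3/4$ is essentially tight at the endpoint. Nothing in the argument beyond independence of the Bernoulli summands plays any role, so in fact the same bound applies to any sum of independent $[0,1]$-valued random variables with expectation $\mu$, which is the form in which~\cite[Theorem~2.1]{purpleBook} is stated and from which our lemma is an immediate consequence.
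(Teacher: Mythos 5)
Your proposal is correct and follows exactly the route the paper intends: the paper simply cites \cite[Theorem~2.1]{purpleBook} and calls the lemma an immediate corollary, and you derive that theorem by the standard exponential-moment argument and then supply the elementary calculus showing $(1+\delta)\ln(1+\delta)-\delta\ge\delta^2/3$ on $(0,3/2)$ and $(1-\delta)\ln(1-\delta)+\delta\ge\delta^2/2$ on $(0,1)$. The convexity/concavity analysis of $f(\delta)=\varphi(\delta)-\delta^2/3$ with the endpoint check $f(3/2)>0$, and the observation that the lower-tail event is vacuous for $\delta\ge1$, correctly close the only gaps between the cited theorem and the stated lemma.
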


\medskip

\paragraph{\bf Sparse regularity}

We define the \emph{density} $d(U,V)$ of a pair of disjoint vertex sets
$(U,V)$ to be the value $e(U,V)/|U||V|$.  
A pair $(U,V)$ is called
\emph{$(\eps,d,p)$-regular} if for any sets $U'\subset U$, $V'\subset V$
satisfying $|U'| \geq \eps |U|$, $|V'|\geq \eps |V|$ we have
$d(U',V')= (1\pm \eps)d p$. We call a pair \emph{$(\eps,p)$-regular} if it is $(\eps,d,p)$-regular for
 some $d$.  

An \emph{$(\eps,d,p)$-regular-partition} of a
graph $H$ is a vertex partition $V_0\cup V_1 \cup \cdots \cup V_t$ of
$V(G)$ with $|V_0| \leq \eps |V|$ and $|V_1|=|V_2|=\cdots =|V_t|$ such that
all but at most $\eps \binom{t}{2}$ pairs $(V_i,V_j)$ with $i,j\geq 1$ are
$(\eps,d,p)$-regular. % with density at least $d$. 
The corresponding \emph{$(\eps,d,p)$-reduced graph} $R$ is the graph with
vertex set $[t]$ where $ij$ is an edge precisely if
$(V_i,V_j)$ is an $(\eps,d,p)$-regular pair in $H$. % with density at least $d$.
The following version of the Sparse Regularity Lemma can for example be easily deduced from~\cite[Theorem~8]{BKT_adversarial}.\footnote{The statement is identical to that in~\cite{BKT_adversarial} except for the final `Furthermore' conclusion. That we can assume no part is in many irregular pairs follows from the proof there. Now the final condition can be obtained by applying~\cite[Theorem~8]{BKT_adversarial} with $\eps/10$ replacing $\eps$ and removing vertices from $V_1,\dots,V_{v(R)}$ to $V_0$, keeping the sizes of the $V_i$ equal, until no vertices failing the condition remain. Vertices $v$ which have more than $(d+\eps/10)pn$ neighbours not on edges of $R$ in the original partition either have $\deg_\Gamma(v,V_j)\ge(1+\eps/10)p|V_j|$ neighbours for some $j$, or not. There can only be $o(n)$ of the former by a standard application of the Chernoff bound, and at most $\eps'|V_i|$ of the latter in any $V_i$ by regularity. The same Chernoff bound shows that $o(n)$ further vertices are removed due to changing the partition. It is easy to check that $R$ is still an $(\eps,d,p)$-reduced graph for the new partition.}. 

\begin{lemma}[Sparse Regularity Lemma, Minimum Degree Form]\label{Lem:MinDegSparseReg}
  For all $\beta \in [0,1]$, $\eps >0$ and every integer $t_0$ there exists
  $t_1 \geq 1$ such that for all $d\in [0,1]$ the following holds a.a.s.\
  for $\Gamma = G(n,p)$ if $p=\omega(\log^4 n /n)$.  Let $H$ be a spanning
  subgraph of $\Gamma$ with $\deg_H(v)\geq \beta \deg_\Gamma (v)$ for all
  $v\in V(H)$.  Then there is an $(\eps,d,p)$-regular-partition of $H$ with
  reduced graph $R$ of minimum degree $\delta (R)\geq (\beta - d -\eps)v(R)$
  with $t_0 \leq v(R) \leq t_1$. Furthermore, for each $i\in V(R)$ the number of $j\in V(R)$ such that $(V_i,V_j)$ is not $(\eps,p)$-regular is at most $\eps v(R)$, and for each $i\in V(R)$ and $v\in V_i$, at most $(d+\eps)pn$ neighbours of $v$ lie in $\bigcup_{j:ij\not\in R}V_j$.
\end{lemma}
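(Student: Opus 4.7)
The approach is to derive this enhanced version from \cite[Theorem~8]{BKT_adversarial} by a cleanup step. First, apply that theorem with $\eps/10$ in place of $\eps$ (keeping $d$ and $t_0$) to obtain an $(\eps/10,d,p)$-regular partition $V_0\cup V_1\cup\cdots\cup V_t$ of $H$ with reduced graph $R$ satisfying $\delta(R)\ge(\beta-d-\eps/10)v(R)$ and $t_0\le t\le t_1$ for some~$t_1$ depending only on $\eps, d, t_0$. Inspection of that proof yields the first ``Furthermore'' conclusion essentially for free: we may assume each $V_i$ lies in at most $(\eps/10)v(R)$ non-$(\eps/10,p)$-regular pairs.

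For the second ``Furthermore'' conclusion, call a vertex $v\in V_i$ \emph{bad} if more than $(d+\eps)pn$ of its $H$-neighbours lie in $\bigcup_{j:\,ij\notin R}V_j$. The plan is to move all bad vertices into $V_0$ and then, if necessary, move a few additional vertices so that $V_1,\ldots,V_t$ remain of equal size. The key claim is that only $o(n)$ vertices are moved in total, so that $|V_0|\le\eps|V|$ is preserved and the resulting partition remains $(\eps,d,p)$-regular with reduced graph $R$: the regularity constant of each surviving pair worsens by only $o(1)$, which is absorbed by the initial choice of $\eps/10$.

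To bound the bad vertices, split them into two classes. \emph{Class~(a)} consists of vertices $v$ with $\deg_\Gamma(v,V_j)\ge(1+\eps/10)p|V_j|$ for some $j$; a standard Chernoff bound, union-bounded over~$v$ and the (at most $t_1$) choices of~$j$, shows that a.a.s.\ only $o(n)$ such vertices exist in $\Gamma=G(n,p)$ when $p=\omega(\log^4 n/n)$. \emph{Class~(b)} consists of bad vertices $v$ with $\deg_\Gamma(v,V_j)\le(1+\eps/10)p|V_j|$ for every $j$. For such $v\in V_i$, the contribution to its $H$-degree from non-$(\eps/10,p)$-regular pairs is at most $(\eps/10)v(R)\cdot(1+\eps/10)p\cdot\lceil n/v(R)\rceil=O(\eps pn)$, using the first ``Furthermore'' conclusion together with the $\Gamma$-degree bound. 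For each regular non-$R$ pair $(V_i,V_j)$, regularity shows that all but $(\eps/10)|V_i|$ vertices of $V_i$ have $\deg_H(\,\cdot\,,V_j)$ within $(1\pm\eps/10)$ of the pair's density times $|V_j|$ (where that density is either below $(1-\eps/10)dp$ or, a.a.s.\ atypically, at most $(1+o(1))p$), and summing over $j$ forces at most $(\eps/10)|V_i|$ vertices of $V_i$ to be bad in Class~(b). A second Chernoff argument bounds the rebalancing step by $o(n)$ further vertices moved. The main obstacle is the careful accounting in Class~(b), which must simultaneously handle irregular pairs and regular non-$R$ pairs of both too-low and too-high density without spending more than $(d+\eps)pn$ of degree budget.
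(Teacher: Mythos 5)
Your proposal follows essentially the same route as the paper, which proves this lemma only in a footnote: apply \cite[Theorem~8]{BKT_adversarial} with $\eps/10$ in place of $\eps$, read the first ``Furthermore'' off that proof, move the vertices violating the degree condition to $V_0$ (splitting them into those with atypically large $\Gamma$-degree to some part, handled by Chernoff, and the rest, handled by regularity), and rebalance using a further Chernoff bound. The Class~(b) accounting that you flag as the main obstacle is precisely the step the paper also dispatches with a one-line appeal to regularity, so your sketch is at the same level of detail as, and structurally identical to, the paper's own argument.
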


When applying the Sparse Regularity Lemma we will wish to say that if
$H$ was triangle-free our reduced graph must also be triangle-free.  In
order to do this we require the following triangle case of the K\L{}R
Conjecture.

\begin{lemma}[Kohayakawa, {\L}uczak and R\"odl~{\cite[Lemma~7]{KLR}}]\label{Lem:KLR}
  For any $d>0$, there exists $\eps'>0$ such that for any $\eta>0$ there
  exists $c>0$ such that if $p \geq cn^{-1/2}$ then a.a.s.\ $\Gamma=G(n,p)$ has the following property. If $G\subset\Gamma$ contains an $(\eps',d,p)$-regular
  triple on parts of size at least $\eta n$, then $G$ contains a triangle.
\end{lemma}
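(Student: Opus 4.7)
My plan is a first-moment (union-bound) argument, following the original approach of Kohayakawa, \L{}uczak and R\"odl. Let $X$ count the triples $(V_1,V_2,V_3)$ of pairwise disjoint subsets of $V(\Gamma)$ with each $|V_i|=m\ge\eta n$, together with a subgraph $G\subset\Gamma[V_1,V_2,V_3]$ that is $(\eps',d,p)$-regular on every pair $(V_i,V_j)$ and triangle-free. It suffices to show $\Exp X=o(1)$: then by Markov's inequality no such configuration exists a.a.s.

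For a fixed partition, $\Exp X$ receives a contribution of $N(V_1,V_2,V_3)\cdot p^{3(1\pm\eps')dpm^2}$, where $N(V_1,V_2,V_3)$ counts triangle-free $(\eps',d,p)$-regular tripartite graphs on the parts. There are at most $3^n$ relevant partitions, so the key step is a counting bound on $N(V_1,V_2,V_3)$ that beats the trivial bound $\binom{m^2}{dpm^2}^3\le(e/dp)^{3(1+\eps')dpm^2}$ by a factor of $p^{\Omega(dpm^2)}$. To obtain such a bound I would condition on the bipartite graph $F_{12}$ and count triangle-free extensions $(F_{13},F_{23})$ on $(V_1,V_3)$ and $(V_2,V_3)$: the triangle-free condition forces, for each $v\in V_3$, that $\bigl(N_{F_{13}}(v),N_{F_{23}}(v)\bigr)$ spans no $F_{12}$-edge. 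Uniformly chosen sets of sizes $\approx dpm$ would, in expectation, span $\Omega(d^3p^3m^2)$ $F_{12}$-edges, so a Janson-type inequality bounds the fraction of choices avoiding all $F_{12}$-edges by $\exp(-\Omega(d^3p^3m^2))$. Taking the product over the $m$ vertices of $V_3$ yields a saving of $\exp(-\Omega(d^3p^3m^3))$, which after weighting by edge probabilities gives the required factor.

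Combining the ingredients yields $\Exp X\le 3^n\cdot(Cp)^{\Omega(dpm^2)}$ for some constant $C=C(d,\eps',\eta)$. Since $m\ge\eta n$ and $p\ge cn^{-1/2}$, we have $dpm^2\ge d\eta^2c^2 n$, which dominates the $3^n$ once $c$ is taken large enough, giving $\Exp X=o(1)$. The main obstacle is the counting lemma itself: the heuristic above assumes that the neighbourhood pairs for distinct $v\in V_3$ behave independently, but in a regular triple they are constrained by the bilateral regularity of $(F_{13},F_{23})$. Handling this rigorously requires splitting the count according to the joint degree profile of $V_3$ into $V_1$ and $V_2$, and invoking the regularity of $F_{12}$ to control contributions from small, medium and large neighbourhood sets separately — this is precisely the delicate content of \cite[Lemma~7]{KLR}.
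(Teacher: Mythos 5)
This lemma is not proved in the paper at all: it is quoted verbatim as \cite[Lemma~7]{KLR}, so there is no internal proof to compare against. Your outline does follow the strategy of the original KŁR argument (first moment over pairs consisting of a tripartition and a candidate triangle-free regular tripartite graph $F$, using $\Prob(F\subset\Gamma)=p^{e(F)}$, with the $3^n$ (really $4^n$, plus a sum over $m$, which is harmless) choices of partition beaten because $dpm^2\ge d\eta^2c^2n\gg n$ once $c$ is large), and the quantifier bookkeeping at the end — $\eps'$ depending on $d$, then $c$ depending on $\eta$ — is consistent with the statement.

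The genuine gap is that the entire mathematical content of the lemma is the counting bound on $N(V_1,V_2,V_3)$, and you do not prove it: the Janson heuristic treats the neighbourhood pairs $\bigl(N_{F_{13}}(v),N_{F_{23}}(v)\bigr)$ for distinct $v\in V_3$ as independent uniform sets, which they are not, and your final sentence concedes that making this rigorous ``is precisely the delicate content of \cite[Lemma~7]{KLR}'' — i.e.\ the argument reduces the cited lemma to the cited lemma. Two smaller points. First, you do not need, and in general cannot get, a saving of $p^{\Omega(dpm^2)}$ over the trivial count $\binom{m^2}{dpm^2}^3$; what the KŁR-type counting delivers, and what suffices, is a saving of $\xi^{dpm^2}$ for an arbitrarily small \emph{constant} $\xi=\xi(d)$ (your own Janson estimate $\exp(-\Omega(d^3p^3m^3))=\exp(-\Omega(d^2p^2m)\cdot dpm^2)$ is of exactly this constant-per-edge form, since $p^2m\ge c^2\eta^2$ is only a constant), so the displayed bound $\Exp X\le 3^n(Cp)^{\Omega(dpm^2)}$ overstates what the method gives, even though the weaker, correct bound still tends to $0$. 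Second, one should be slightly careful that it is enough to drop the regularity of $(F_{13},F_{23})$ and only retain the lower-regularity of $F_{12}$ together with the approximate degree of each $v\in V_3$ into $V_1$ and $V_2$; this is the monotonicity that makes the degree-profile decomposition you allude to workable, but it needs to be said, since otherwise the union bound is over a family that is not closed under the operations used in the count.
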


The following lemma combines Lemma~\ref{Lem:MinDegSparseReg} with
Lemma~\ref{Lem:KLR} to give a regular partition of a triangle-free
subgraph $H$ for which the reduced graph is triangle-free.  
% If we have a
% minimum degree condition on $H$ we can find such a partition for which the
% reduced graph also has a suitable minimum degree condition.

\begin{lemma}\label{Lem:TriangleFreeReducedGraph}
  For any $0<\eps,d,\beta<1$ and any $t_0$ there exist $c$ and $t_1$ such
  that for $p\geq cn^{-1/2}$ in $\Gamma=G(n,p)$ a.a.s.\ any triangle-free
  subgraph $H$ with $\delta(H)>\beta pn$ has an
  $(\eps,d,p)$-regular-partition $V_0\cup V_1 \cup \cdots \cup V_t$ with
  $t_0\leq t \leq t_1$ such that the corresponding reduced graph $R$ is
  triangle-free and has minimum degree at least $(\beta-d-\eps)v(R)$.
\end{lemma}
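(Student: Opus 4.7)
The plan is to combine the sparse regularity lemma with the K\L{}R triangle lemma: one chooses parameters so that any regular triple in the reduced graph would force a triangle in $H$, and hence the triangle-freeness of $H$ propagates to the reduced graph $R$.

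Given $\eps,d,\beta\in(0,1)$ and $t_0$, I would first invoke Lemma~\ref{Lem:KLR} on the density~$d$ to obtain an $\eps'=\eps'(d)>0$. Set $\tilde\eps:=\min\{\eps/2,\eps',1/2\}$ and $\tilde\beta:=\beta-\eps/2$, then feed $\tilde\beta$, $\tilde\eps$ and $t_0$ into Lemma~\ref{Lem:MinDegSparseReg} to obtain an upper bound $t_1$ on the number of non-exceptional parts. Finally apply Lemma~\ref{Lem:KLR} once more, with density~$d$ and size parameter $\eta:=1/(2t_1)$, to obtain the threshold constant $c$, enlarging $c$ if necessary so that $p\ge cn^{-1/2}$ forces $p=\omega(\log^4n/n)$ (which is automatic for any fixed $c$).

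For $p\ge cn^{-1/2}$, a.a.s.\ the conclusions of both Lemma~\ref{Lem:MinDegSparseReg} and Lemma~\ref{Lem:KLR} hold on $\Gamma=G(n,p)$, and a Chernoff bound yields $\deg_\Gamma(v)=(1+o(1))pn$ uniformly in $v$. Let $H$ be any triangle-free spanning subgraph with $\delta(H)\ge\beta pn$. Then $\deg_H(v)\ge\tilde\beta\deg_\Gamma(v)$ for every $v$ once $n$ is large enough, so Lemma~\ref{Lem:MinDegSparseReg} produces an $(\tilde\eps,d,p)$-regular partition $V_0\cup V_1\cup\cdots\cup V_t$ with $t_0\le t\le t_1$ whose reduced graph $R$ satisfies $\delta(R)\ge(\tilde\beta-d-\tilde\eps)v(R)\ge(\beta-d-\eps)v(R)$. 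Since $\tilde\eps\le\eps$, this partition is in particular $(\eps,d,p)$-regular, as required.

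It remains to verify that $R$ is triangle-free. Suppose for contradiction that $ijk$ were a triangle in $R$. Then the three pairs among $V_i,V_j,V_k$ would all be $(\tilde\eps,d,p)$-regular, and hence $(\eps',d,p)$-regular, and each of these parts would have size at least $(1-\tilde\eps)n/t_1\ge\eta n$. Lemma~\ref{Lem:KLR} would then embed a triangle into $H$, contradicting the hypothesis. The only subtlety is the order of parameter choices: $\eps'$ must be determined from $d$ before $t_1$ is known, while $\eta$ and hence $c$ depend on $t_1$ afterwards. This dependency chain causes no circularity, and the argument is a clean assembly of the two tools rather than presenting any real obstacle.
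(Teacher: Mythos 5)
Your proposal is correct and follows essentially the same route as the paper: obtain $\eps'$ from the K\L{}R lemma first, run the minimum-degree sparse regularity lemma with the smaller regularity parameter, then choose $\eta=1/(2t_1)$ so that any triangle in $R$ yields a regular triple on parts of size at least $\eta n$ and hence a triangle in $H$. Your extra bookkeeping with $\tilde\beta=\beta-\eps/2$ to reconcile $\delta(H)\ge\beta pn$ with the hypothesis $\deg_H(v)\ge\beta\deg_\Gamma(v)$ of Lemma~\ref{Lem:MinDegSparseReg} is a minor refinement the paper elides, and it does not change the argument.
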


\begin{proof}
  Given $\eps$ and $d$ both in $(0,1)$ we apply Lemma~\ref{Lem:KLR} to
  obtain $\eps'$.  We assume $\eps' \leq \eps$ (otherwise decrease $\eps'$).  We apply
  Lemma~\ref{Lem:MinDegSparseReg} to $\beta$, $\eps'$ and $t_0$ to obtain
  $t_1$. We choose the $\eta$ of Lemma~\ref{Lem:KLR} to be less than
  $1/(2t_1)$ and obtain a $c>0$.

  Now assume that $p\geq cn^{-1/2}$, and that the likely events of
  Lemma~\ref{Lem:MinDegSparseReg} and Lemma~\ref{Lem:KLR} hold for
  $\Gamma=G(n,p)$. Thus, by Lemma~\ref{Lem:MinDegSparseReg} we obtain for
  any subgraph $H$ of $\Gamma$ an $(\eps',d,p)$-regular partition into at
  most $t_1$ parts such that the corresponding reduced graph $R$ has
  minimum degree at least $(\beta-d-\eps)v(R)$.  Since $\eps' \leq \eps$ an
  $(\eps',d,p)$-regular partition is also an $(\eps,d,p)$-regular
  partition.  By Lemma~\ref{Lem:KLR}, if there is an $(\eps',d,p)$-regular triple in
  $\Gamma$ on parts of size at least $\eta n$ with density at least $d$
  then $H$ contains a triangle.  Therefore, since any triangle in the reduced graph
  corresponds to a regular triple on parts larger than $\eta n$, our
  reduced graph must be triangle-free.
\end{proof}

Next, we state an inheritance lemma, which can be found in~\cite{BUL_sparse} and is
based on techniques from~\cite{KRSS}.  It uses the concept of lower-regular
pairs, rather than regular pairs, which drops the upper bound on the
density of subpairs. More precisely, a pair $(X,Y)$ is
\emph{$(\eps,d,p)$-lower-regular} if for any $X'\subset X$, $Y'\subset Y$
with $|X'| \geq \eps |X|$, $|Y'|\geq \eps |Y|$ we have
$d(X',Y')\ge(1-\eps)d p$.

\begin{lemma}[Regularity Inheritance]\label{Lem:RI}
  For any $0<\eps',d$ there exists $\eps_0$ and $C'$ such that for any
  $0<\eps<\eps_0$ and any $0<p=p(n)<1$ the random graph $\Gamma=G(n,p)$
  a.a.s.\ has the following property. For any $X,Y \subset V(\Gamma)$ with
  $|X|,|Y|\geq C'\max\{p^{-2},p^{-1}\log n\}$ and any subgraph $H$ of
  $\Gamma[X,Y]$ which is $(\eps,d,p)$-lower-regular, there are at most
  $C'\max\{p^{-2},p^{-1} \log n\}$ vertices $v$ of $V(\Gamma)$ such that
  $(X\cap N_\Gamma(v),Y\cap N_\Gamma (v))$ is not
  $(\eps',d,p)$-lower-regular in $H$.
\end{lemma}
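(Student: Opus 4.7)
The plan is to bound the number of ``bad'' vertices --- those $v\in V(\Gamma)$ whose $\Gamma$-neighborhood intersected with $(X,Y)$ fails to inherit the $(\eps',d,p)$-lower-regularity of $H$ --- by combining Chernoff concentration on the random neighborhoods with a union bound that exploits the lower-regularity hypothesis on $H$ itself.

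First, I would apply Lemma~\ref{Chernoff} to the random variables $|X\cap N_\Gamma(v)|$ and $|Y\cap N_\Gamma(v)|$ for each $v\in V(\Gamma)$. Since $|X|,|Y|\ge C'p^{-1}\log n$ with $C'$ large, the probability that a fixed $v$ deviates from the expectations $p|X|,p|Y|$ by more than a $(1\pm\tfrac{\eps'}{10})$-factor is $\exp(-\Omega(C'\log n))$, which after a union bound over $v$ leaves only $o(p^{-2})$ exceptional vertices with high probability. For every remaining $v$ I would call $v$ \emph{bad} if there exist witness sets $X'_v\subset X\cap N_\Gamma(v)$ and $Y'_v\subset Y\cap N_\Gamma(v)$ with $|X'_v|\ge\eps'|X\cap N_\Gamma(v)|$, $|Y'_v|\ge\eps'|Y\cap N_\Gamma(v)|$, and $e_H(X'_v,Y'_v)<(1-\eps')dp|X'_v||Y'_v|$.

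Next, I would enumerate potential witness pairs $(A,B)\subset X\times Y$ of sizes $a\approx\eps'p|X|$ and $b\approx\eps'p|Y|$: for each such pair with $e_H(A,B)<(1-\eps')dp\cdot ab$, the expected number of vertices $v\in V(\Gamma)$ with $A\cup B\subset N_\Gamma(v)$ is $(n-a-b)p^{a+b}$, and Chernoff gives strong concentration provided the deviation exponent $\Omega(a+b)=\Omega(\eps'p(|X|+|Y|))$ dominates the log of the number of witness pairs counted. The main obstacle is controlling this latter quantity, since a naive union bound over all $\binom{|X|}{a}\binom{|Y|}{b}$ subset-pairs is exponentially too weak.

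The key device to beat the naive bound is to exploit the $(\eps,d,p)$-lower-regularity of $H$: if too many witness pairs existed, a compression/averaging argument would combine them into a single pair of sets of sizes at least $\eps|X|$ and $\eps|Y|$ on which $H$ had density below $(1-\eps)dp$, contradicting the assumed lower-regularity. This is precisely where the choice $\eps_0\ll\eps'$ becomes essential, and where the lower bound $|X|,|Y|\ge C'p^{-2}$ enters: it guarantees $a,b=\Omega(p^{-1})$ so that the Chernoff exponent $\Omega(a+b)$ can absorb the entropy of the enumeration after the regularity bound cuts it down. A careful quantitative tracking of these parameters yields at most $C'\max\{p^{-2},p^{-1}\log n\}$ bad vertices with high probability, as required. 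I expect the compression step and the simultaneous handling of the two regimes $p^{-2}$ versus $p^{-1}\log n$ (which correspond respectively to the $p$ small and $p$ large cases) to be the subtlest part of the argument.
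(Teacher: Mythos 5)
The paper does not actually prove Lemma~\ref{Lem:RI}: it is imported verbatim from~\cite{BUL_sparse}, where its proof (building on the machinery of Gerke, Kohayakawa, R\"odl and Steger and of~\cite{KRSS}) occupies a substantial dedicated section. Your sketch correctly identifies the overall shape of that argument --- reduce to fixed-size witness pairs $(A,B)$ with $a\approx\eps'p|X|$, $b\approx\eps'p|Y|$, observe that the naive union bound over $\binom{|X|}{a}\binom{|Y|}{b}$ pairs loses against the containment probability $p^{a+b}$, and recover the loss from a counting lemma saying that a lower-regular $H$ has at most $\beta^{a}\binom{|X|}{a}\binom{|Y|}{b}$ sparse sub-pairs of the given sizes. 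But the step you describe as ``a compression/averaging argument'' is precisely the hard core of the lemma, and averaging does not deliver it. Averaging over sub-pairs of fixed sizes only tells you that the \emph{mean} density of $(A,B)$ equals the density of $(X,Y)$; it gives no exponentially small bound on the \emph{fraction} of sparse sub-pairs, and taking unions of many sparse pairs need not produce a large sparse pair. Worse, the sets $A,B$ here have size $\Theta(p|X|)=o(|X|)$, and for sublinear subsets of a sparse graph the desired concentration genuinely fails without extra structure (this failure is exactly why the K\L{}R conjecture was difficult); the actual proof is a delicate vertex-by-vertex exposure/induction with a defect inequality, using lower-regularity at every scale, not a one-line compression.

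There is a second gap you do not address: adaptivity. The subgraph $H$ is chosen by an adversary \emph{after} $\Gamma$ is revealed, so the family of sparse witness pairs $\mathcal{W}(H)$ is a function of the same randomness as the neighbourhoods $N_\Gamma(v)$. You therefore cannot fix a sparse pair $(A,B)$, apply Chernoff to the number of $v$ with $A\cup B\subseteq N_\Gamma(v)$, and union-bound over $\mathcal{W}(H)$: the event ``$(A,B)\in\mathcal{W}(H)$'' is not independent of the event ``$A\cup B\subseteq N_\Gamma(v)$'', and a union bound over all candidate families $\mathcal{W}$ is doubly exponential. Handling this quantifier order (for all $X,Y$, for all $H\subseteq\Gamma[X,Y]$, few bad $v$) is a genuine part of the difficulty and requires restructuring the probabilistic statement, not just ``careful quantitative tracking''. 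A smaller point: in the regime $p\le 1/\log n$ your first Chernoff step cannot conclude via a union bound over all $v$ (the tail is only $e^{-\Omega(C'p^{-1})}$, not $n^{-\omega(1)}$); one must instead bound the number of atypical vertices, as in Lemma~\ref{Lem:Basics}\ref{Atypical:Vertices}. As written, the proposal is a plausible roadmap but not a proof; the two missing ingredients are exactly the theorems one would otherwise cite.
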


We shall need the following consequence of this lemma, stating that for every
regular partition of every $H \subset G(n,p)$ the neighbourhoods of most
vertices induce lower-regular subgraphs on the regular pairs of the
partition.

\begin{lemma}\label{Lem:RIpart} 
  For any $0<\eps',d<1$ there exist $\eps_0$ and $C'$ such that for any
  $t_1\in \NN$ and any $p>2C't_1n^{-1/2}$ the random graph $\Gamma=G(n,p)$
  a.a.s.\ satisfies the following. For any $0<\eps<\eps_0$, any spanning
  subgraph $H$ of $\Gamma$ and any $(\eps,d,p)$-regular-partition $V_0\cup
  V_1 \cup \cdots \cup V_t$ of $H$ with $t\leq t_1$ and reduced graph~$R$,
  all but at most $ \binom{t_1}{2}C'\max \{p^{-2},p^{-1}\log n\}$ vertices
  $v$ of $H$ have the property that for each $ij\in E(R)$ the pair
  $(N_\Gamma(v)\cap V_i,N_\Gamma(v)\cap V_j)$ is
  $(\eps',d,p)$-lower-regular in $H$.
\end{lemma}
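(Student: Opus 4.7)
The proof reduces Lemma~\ref{Lem:RIpart} to a union bound over the regular pairs of the partition, each instance handled by Lemma~\ref{Lem:RI}. Let $\eps_0^{\text{RI}}$ and $C'$ be the constants supplied by Lemma~\ref{Lem:RI} applied with inputs $\eps'$ and $d$, and take the outputs of Lemma~\ref{Lem:RIpart} to be $\eps_0 = \eps_0^{\text{RI}}/2$ and the same $C'$ (possibly enlarged so that $C' \geq 1$). I then condition on the a.a.s.\ event guaranteed by Lemma~\ref{Lem:RI} when applied with parameter $\eps_0^{\text{RI}}/2$. Because this event is a property of $\Gamma$ quantified universally over all $X,Y\subseteq V(\Gamma)$ and all $(\eps_0^{\text{RI}}/2,d,p)$-lower-regular subgraphs of $\Gamma[X,Y]$, it holds simultaneously for every spanning subgraph $H \subseteq \Gamma$ and every $(\eps,d,p)$-regular-partition of $H$ that we subsequently consider.

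Now fix any $\eps < \eps_0$, any spanning $H \subseteq \Gamma$, and any $(\eps,d,p)$-regular-partition $V_0 \cup V_1 \cup \cdots \cup V_t$ with $t \leq t_1$ and reduced graph $R$. For each edge $ij \in E(R)$ the bipartite subgraph $H[V_i,V_j]$ is $(\eps,d,p)$-regular, and so in particular $(\eps,d,p)$-lower-regular in $\Gamma[V_i,V_j]$. Since $|V_0|\leq \eps |V|$ and the other parts have equal size, $|V_i|,|V_j| \geq (1-\eps)n/t \geq n/(2t_1)$. The hypothesis $p > 2C't_1 n^{-1/2}$ yields $C'p^{-2} < n/(4C't_1^2) \leq n/(2t_1)$, and combined with $\log n \leq n^{1/2}$ for large $n$ it also gives $C'p^{-1}\log n \leq n/(2t_1)$, so $|V_i|,|V_j|\geq C'\max\{p^{-2}, p^{-1}\log n\}$ as required by Lemma~\ref{Lem:RI}. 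Applying that lemma produces a set $B_{ij}$ of at most $C'\max\{p^{-2},p^{-1}\log n\}$ vertices $v$ for which $(N_\Gamma(v)\cap V_i, N_\Gamma(v)\cap V_j)$ is not $(\eps',d,p)$-lower-regular in $H$. Every vertex outside $\bigcup_{ij \in E(R)} B_{ij}$ has the required property for every edge of $R$, and $|E(R)|\leq \binom{t}{2}\leq \binom{t_1}{2}$, so a union bound gives the stated bound of $\binom{t_1}{2}C'\max\{p^{-2},p^{-1}\log n\}$ exceptional vertices.

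The only subtlety is arranging that a \emph{single} a.a.s.\ event of $\Gamma$ handles the arbitrary choice of $\eps$, of $H$, and of partition. For $H$ and the partition this is built into the quantifier structure of Lemma~\ref{Lem:RI}. For $\eps$, I invoke the monotonicity of lower-regularity: if $(X,Y)$ is $(\eps,d,p)$-lower-regular and $\tilde\eps \geq \eps$, then $(X,Y)$ is also $(\tilde\eps,d,p)$-lower-regular, since the size threshold $\tilde\eps|X|$ is more restrictive on subsets while the density requirement $(1-\tilde\eps)dp$ is weaker. Hence every $(\eps,d,p)$-regular pair arising in the proof is $(\eps_0^{\text{RI}}/2,d,p)$-lower-regular, so Lemma~\ref{Lem:RI} invoked once with parameter $\eps_0^{\text{RI}}/2$ indeed governs all $\eps < \eps_0 = \eps_0^{\text{RI}}/2$ uniformly, completing the argument.
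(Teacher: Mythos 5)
Your proof is correct and follows essentially the same route as the paper's: apply Lemma~\ref{Lem:RI} once with inputs $\eps'$ and $d$, verify that $|V_i|,|V_j|\ge n/(2t_1)\ge C'\max\{p^{-2},p^{-1}\log n\}$ using $p>2C't_1n^{-1/2}$, and take a union bound over the at most $\binom{t_1}{2}$ edges of $R$. Your explicit treatment of the uniformity over $\eps<\eps_0$ via monotonicity of lower-regularity is a detail the paper leaves implicit, but it does not change the argument.
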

\begin{proof}
  By applying Lemma~\ref{Lem:RI} with $\eps'$ and $d$ we are given $\eps_0$
  and $C'$.  Suppose $p\geq 2C'tn^{-1/2}$ and that $\Gamma$ satisfies the
  probable event of Lemma~\ref{Lem:RI}. Now let $H\subset\Gamma$ and a
  partition $V_0\cup V_1 \cup \cdots \cup V_t$ of $H$ with reduced
  graph~$R$ be given. Let $ij\in E(R)$.  For large enough $n$ we have
  $C'\max\{p^{-2},p^{-1}\log n\}\leq
  C'\max\{\frac{n}{4C'^2t_1^2},\frac{\sqrt{n}\log n}{2C't_1}\}\leq
  \frac{n}{2t_1}\le|V_i|,|V_j|$. So we conclude from Lemma~\ref{Lem:RI}
  that for all but at most $C'\max\{p^{-2},p^{-1}\log n\}$ vertices $v\in
  V(H)$ the pair $(N_\Gamma(v)\cap V_i,N_\Gamma(v)\cap V_j)$ is
  $(\eps',d,p)$-lower-regular in $H$.  The lemma follows by summing over
  all $ij\in E(R)$.
\end{proof}

Finally, we need the following special case of the Slicing Lemma.

\begin{lemma}[Slicing Lemma]\label{Lem:Slicing}
  Let $(V_i,V_j)$ be $(\eps,d,p)$-lower-regular.  For any $X\subset V_i$,
  $Y\subset V_j$ such that $|X|\geq d|V_i|,|Y|\geq d|V_j|$ the pair $(X,Y)$
  is $(\frac{\eps}{d},d,p)$-lower-regular.
\end{lemma}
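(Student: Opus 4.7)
The plan is to unpack the definition of $(\eps/d,d,p)$-lower-regularity for $(X,Y)$ and verify it directly from the hypothesis. Specifically, I would take an arbitrary pair of subsets $X'\subset X$ and $Y'\subset Y$ with $|X'|\ge\frac{\eps}{d}|X|$ and $|Y'|\ge\frac{\eps}{d}|Y|$, and show the density bound $d(X',Y')\ge(1-\frac{\eps}{d})dp$.

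The first step is to observe that $X'$ and $Y'$ are also large enough to trigger the lower-regularity of the original pair $(V_i,V_j)$: using the size hypothesis $|X|\ge d|V_i|$, one gets $|X'|\ge\frac{\eps}{d}|X|\ge\frac{\eps}{d}\cdot d|V_i|=\eps|V_i|$, and symmetrically $|Y'|\ge\eps|V_j|$. Then the $(\eps,d,p)$-lower-regularity of $(V_i,V_j)$ applied to $(X',Y')$ yields $d(X',Y')\ge(1-\eps)dp$.

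Finally, since $d\le1$ we have $\eps\le\frac{\eps}{d}$, hence $(1-\eps)dp\ge(1-\frac{\eps}{d})dp$, which is exactly the bound required for $(\frac{\eps}{d},d,p)$-lower-regularity. Since $(X',Y')$ was arbitrary subject to the size thresholds, this completes the verification.

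There is really no obstacle: the argument is a one-line chain of inequalities once the right subsets are substituted into the regularity definition. The only mild subtlety is the one-sided nature of lower-regularity (no upper bound on density), which means, unlike the original two-sided Slicing Lemma, one need not worry about the $(1+\eps)dp$ side of the density estimate; this makes the proof a pure monotonicity argument in the parameter $\eps$.
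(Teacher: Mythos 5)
Your proposal is correct and coincides with the paper's own proof: the paper also takes $X'\subset X$, $Y'\subset Y$ with $|X'|\ge\frac{\eps}{d}|X|\ge\eps|V_i|$ and $|Y'|\ge\frac{\eps}{d}|Y|\ge\eps|V_j|$, applies lower-regularity of $(V_i,V_j)$, and concludes $d(X',Y')\ge(1-\eps)dp\ge\big(1-\frac{\eps}{d}\big)dp$. No further comment is needed.
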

\begin{proof}
  Let $X'\subset X,\,Y'\subset Y$ satisfy $|X'|\geq
  \frac{\eps}{d}|X|\ge\eps|V_i|$ and $|Y'|\geq \frac{\eps}{d}|Y|\ge\eps|V_j|$. So $d(X',Y')\ge(1-\eps)dp\ge\big(1-\frac{\eps}{d}\big)dp$.
\end{proof}

\section{Proof of Theorem~\ref{Thm:Construction}}

Recall that Theorem~\ref{Thm:Construction} asserts that for any $\gamma>0$
and $r\in\NN$, there are $c,c'>0$ such that for any $n^{-1/2}/c'\le
p\le c'$ the random graph $G(n,p)$ a.a.s.\ contains a subgraph which is
triangle-free, whose minimum degree is at least
$\big(\tfrac12-\gamma\big)pn$, and which cannot be made $r$-partite by
removing any $cp^{-1}n$ edges.  

The idea of the proof of this theorem is as follows. Let
$\Gamma=G(n,p)$ and partition~$[n]$ into sets $B=[n/2]$ and
$A=[n]\setminus B$. We remove all edges in~$A$. We further `sparsify'
$\Gamma[B]$, keeping edges with a suitable probability~$p'$. The goal of
this `sparsification' is to obtain a subgraph of $\Gamma[B]$ which is still
complex enough for the rest of the argument, but is such that for each
vertex~$a$ in~$A$ the number of edges in $N(a,B)$ is negligible compared to
the degree of~$a$ (see Lemma~\ref{lem:G1nice}\ref{G1nice:nbhB}). Observe
that this subgraph is distributed as the following inhomogeneous random
graph model.  We define $G(n,p,p')$ to be the random graph on $[n]$
obtained by letting pairs of vertices within $[n/2]$ be edges independently
with probability $pp'$, letting pairs in $[n]\setminus [n/2]$ all be non-edges, and letting all other pairs be edges independently
with probability $p$.

We next use the fact, first proved in~\cite{Tutte}, that there
exists a triangle-free graph~$F$ which is not $r$-partite.  
Let~$[\ell]$ be the vertex set of~$F$. We place a `random blow-up' of~$F$
into~$B$ as follows: We partition $B$ into $\ell$ equal sets
$B_1,\ldots,B_\ell$ and keep only those edges in~$B$ running between
$B_i$ and $B_j$ with $ij\in F$. Finally,
we remove in~$B$ all edges with an endpoint whose degree in~$B$ deviates
too much from expectation, and then all edges between~$A$ and~$B$ which are in a
triangle with a vertex from~$A$. This last step is the only
step in which we delete edges between~$A$ and~$B$.

It is easy to check that the resulting graph is triangle-free by
construction. Using some properties of $G(n,p,p')$ and the blow-up of~$F$
we can also show that it cannot be made $r$-partite by deleting $cp^{-1}n$
edges. Moreover, using the fact that for each vertex~$a$ in~$A$ the number
of edges in $N(a,B)$ is small and hence in the last step not many edges
were deleted at any vertex, we can also conclude that the minimum degree of
the resulting graph is at least $\big(\tfrac12-\gamma\big)pn$.

\medskip

The typical properties of $G(n,p,p')$ we need are the following.

\begin{lemma}\label{lem:G1nice}
  For any $\eps>0$ and $K\ge 10$, there exists $0<c<\eps$ such
  that the following holds. If $K n^{-1/2}\le p(n)\le \eps^2 c/(10^4 K^2)$ and $p'=cK^2 p^{-2}n^{-1}$, then a.a.s.\ the random
  graph $G(n,p,p')$ has the following properties. Let $B=[n/2]$ and
  $A=[n]\setminus B$.
  % For any $\eps>0$ and $2\leq r,\ell\in\NN$, there exists $0<c<\eps$ such
  % that the following holds. If $8r\ell n^{-1/2}\le p(n)\le c/10^4$ and $p'=64r^2\ell^2c p^{-2}n^{-1}$, then a.a.s.\ the random
  % graph $G(n,p,p')$ has the following properties. Let $B=[n/2]$ and
  % $A=[n]\setminus B$.
 \begin{enumerate}[label=\abc]
  \item\label{G1nice:deg} $\deg(b,A),\deg(a,B)=\big(\tfrac{1}{2}\pm \eps\big)pn$ for every $a \in A$ and $b \in B$.
  \item\label{G1nice:nbhB} For each $a\in A$, at most $p'p^3n^2$ edges have both
    ends in $N(a,B)$.
  \item\label{G1nice:triB} For each $b\in B$ with $\deg(b,B) \geq \frac{1}{10}p'pn$, the number of vertices $a\in A$ such that there exists $b'\in B$ with $abb'$ a triangle is at most $pn\big(1-(1-p)^{\deg(b,B)}\big)$.  
  \item\label{G1nice:degB} At most $cp^{-1}n$ edges in $B$ are incident to
    some~$b\in B$ with $\deg(b,B)\ge pp'n$ or $\deg(b,B)\leq \frac{1}{10}p'pn$.
 \item\label{G1nice:nosparse} $e(U,V)> 2cp^{-1}n$ for every pair of
    disjoint sets
    $U,V\subset B$ with $|U|,|V|\ge 2n/K$.
    % $|U|,|V|\ge n/(4r\ell)$.
 \end{enumerate}
\end{lemma}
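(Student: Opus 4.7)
The plan is to verify each of the five properties separately by applying Chernoff's inequality (Lemma~\ref{Chernoff}) to an appropriate sum of independent Bernoulli random variables and then taking a union bound.

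For \itm{a}, for each $v$ the degree in question is distributed as $\Bin(n/2,p)$, since the $A$--$B$ edges of $G(n,p,p')$ are independent with probability $p$; Chernoff's inequality with deviation~$\eps$ together with $pn\ge K\sqrt n\gg\log n$ and a union bound over the $n$ vertices finishes this case. For \itm{b}, after conditioning on $N(a,B)$ (whose size is controlled by \itm{a}), the number of edges inside $N(a,B)$ is distributed as $\Bin\bigl(\binom{|N(a,B)|}{2},pp'\bigr)$, with expectation at most $\tfrac18(1+\eps)^2 p^3p'n^2$---a tiny fraction of the target $p^3p'n^2$---and that expectation is $\Omega(cK^2pn)\gg \log n$, so Chernoff and a union bound over $a\in A$ suffice. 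For \itm{c}, fix $b\in B$ with $\deg(b,B)=k\ge \tfrac1{10}p'pn$: for distinct $a\in A$ the event ``$a\sim b$ and $a$ has a neighbour in $N(b,B)$'' depends on pairwise disjoint sets of $\Gamma$-edges, so the count of such $a$ is distributed as $\Bin\bigl(n/2,p(1-(1-p)^k)\bigr)$, whose expectation is exactly half of the target $pn(1-(1-p)^k)$ and is $\Omega(pn)$ since $pk\ge cK^2/10$ is bounded away from zero; Chernoff with $\delta=1$ and a union bound over $b\in B$ then finish this case. For \itm{e}, the edge count $e(U,V)$ between fixed disjoint $U,V\subset B$ with $|U|,|V|\ge 2n/K$ is $\Bin(|U||V|,pp')$ with expectation at least $4n^2 pp'/K^2=4cp^{-1}n$, and Chernoff with $\delta=\tfrac12$ gives failure probability at most $\exp(-cp^{-1}n/3)$; since $p\le\eps^2 c/(10^4 K^2)$ forces $cp^{-1}\ge 10^4 K^2/\eps^2\gg \log 4$, this easily absorbs the $4^n$-union bound over choices of $(U,V)$.

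Property \itm{d} is the hardest. Chernoff applied to $\deg(b,B)\sim \Bin(n/2-1,pp')$ shows that each $b\in B$ is \emph{bad} (i.e.\ has degree outside $\bigl[\tfrac1{10}pp'n,\,pp'n\bigr]$) with probability at most $2\exp(-c_0pp'n)$ for an absolute constant $c_0$. Since $pp'n=cK^2/p\ge 10^4K^4/\eps^2$, this probability is a tiny function of $K$ and $\eps$. We split the bad edges into those incident to a low-degree bad vertex and those incident to a high-degree bad vertex. For the low-degree side, the number of bad edges is at most $\tfrac1{10}pp'n\cdot N_L$, where $N_L$ is the number of low-degree bad vertices; the expected value of $N_L$ is at most $n\exp(-c_0pp'n)$, and because two distinct vertices in $B$ share only one potential common edge an explicit covariance computation gives the variance of $N_L$ as $O\bigl(pp'n^2\exp(-c_0pp'n)\bigr)$, whence a Chebyshev argument yields a.a.s.\ that $N_L=O(n\exp(-c_0pp'n))$. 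Multiplying by $\tfrac1{10}pp'n$ and choosing $K$ large (depending on $\eps$) makes the low-degree bad edges at most $cp^{-1}n/2$. The high-degree side is handled similarly, via the truncated moment estimate $\Exp\bigl[\deg(b,B)\mathbf 1\{\deg(b,B)\ge pp'n\}\bigr]=O(pp'n\exp(-c_0pp'n))$.

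The main obstacle is this concentration step in \itm{d}: when $p$ is close to its upper bound $c'$, the quantity $pp'n$ is merely a large constant, so bad vertices may form a positive fraction of $B$ and a vertex-level Chernoff plus union bound no longer yields $o(1)$. This forces the second-moment argument above, whose effectiveness relies on the fact that the bad-status events of different vertices in $B$ are nearly independent.
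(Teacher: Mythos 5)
Your handling of parts \itm{a}, \itm{b}, \itm{c} and \itm{e} is essentially the paper's argument: reveal the $B$-internal and $A$--$B$ edges in two rounds, apply Chernoff to the resulting binomial, and union-bound over vertices (or, for \itm{e}, over the at most $3^{n/2}$ pairs $(U,V)$, which the exponent $cp^{-1}n/3\ge 10^4K^2\eps^{-2}n/3$ comfortably absorbs). For \itm{d} you take a genuinely different route, and you correctly identify why the naive vertex-level union bound fails when $pp'n=cK^2/p$ is only a constant. The paper sidesteps second moments entirely: by Chernoff and a union bound over all $\binom{n/2}{n/(2K^2)}\le 2^n$ sets $S\subset B$ of size exactly $n/(2K^2)$ it establishes $e(S)\le(1+\eps)p'p\binom{|S|}{2}$ and $e(S,B\setminus S)=(1\pm\eps)p'p|S||B\setminus S|$ --- feasible because these expectations are of order $cp^{-1}n/K^2\ge 10^4n/\eps^2$, so the $\eps^2$ in the Chernoff exponent cancels and the exponent beats $n$ --- and then argues deterministically: a subset of the low-degree set $C$ of size exactly $n/(2K^2)$ would send too few edges to its complement, one of the high-degree set $D$ too many, so $|C|,|D|\le n/(2K^2)$, and any set of that size meets at most $\tfrac12cp^{-1}n$ edges. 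Your Chebyshev route also works: since two degrees share exactly one edge indicator, the covariance of the two badness indicators equals $pp'(1-pp')\Prob[Y=m]^2$, the covariance sum is $O(pp'n^2\rho^2)$ with $\rho$ the badness probability, and $\Prob[N_L>5n/K^2]=O(K^4 pp'\rho)+O(K^4\rho/n)=o(1)$ because $pp'\le cKn^{-1/2}\to0$; the high-degree side goes through with the analogous computation for the truncated degree sum. Two corrections to your write-up. First, $K$ is an input to the lemma and cannot be ``chosen large''; fortunately you do not need this, since $\rho\le 2e^{-c_0\cdot 10^4K^4/\eps^2}\ll 5/K^2$ already holds for every $K\ge10$ in the relevant regime of small $\eps$, and $5n/K^2$ low-degree vertices is exactly what the bound $\tfrac12cp^{-1}n=\tfrac1{10}pp'n\cdot\tfrac{5n}{K^2}$ permits. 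Second, the conclusion to extract from Chebyshev should be the absolute bound $N_L\le 5n/K^2$ rather than $N_L=O(ne^{-c_0pp'n})$: for small $p$ the mean $ne^{-c_0pp'n}$ is itself $o(1)$ and relative concentration about it is not available (there one should instead use Markov to get $N_L=0$ a.a.s., or simply target the absolute bound throughout).
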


We delay the proof of this lemma to after the proof of Theorem~\ref{Thm:Construction}.

\begin{proof}[Proof of Theorem~\ref{Thm:Construction}]
 Given $\gamma>0$ and $r\in\NN$, let $F$ be a triangle-free graph which is
 not $r$-partite. Let $\ell=v(F)$. We set $K=8r\ell$ and
 \begin{equation}\label{construction:eps}
  \eps=\tfrac{1}{400}\gamma r^{-2}\ell^{-2}\,.
 \end{equation}
 Now we let $c>0$ with $c<\eps$ be returned by Lemma~\ref{lem:G1nice} for
 input $\eps$ and $K$. We choose $c'=\min\big(\tfrac{1}{K},\frac{c}{10^4} \big)$.
 
 Given $n^{-1/2}/c'\le p(n)\le c'$, let $p'=cK^2p^{-2}n^{-1}$. Observe that $p'\le 1$ by choice of $p$. Let $B=[n/2]$, and $A=[n]\setminus B$. We generate $\Gamma=G(n,p)$, and let $G_1$ be the subgraph of $\Gamma$ obtained by sparsifying $B$, keeping edges independently with probability $p'$ and removing all edges of $A$. Since $G_1$ is distributed as $G(n,p,p')$, by Lemma~\ref{lem:G1nice} it a.a.s.\ satisfies the properties~\ref{G1nice:deg}--\ref{G1nice:nosparse}. We now condition on $G_1$ satisfying these properties.
 
 Partition $B$ into $\ell$ equal sets $B_1,\ldots,B_\ell$.  Let $G_2$ be
 the subgraph of $G_1$ obtained by keeping only edges of the form $ab$ with
 $a\in A$ and $b\in B$, or of the form $bb'$ with $b\in B_i$ and $b'\in
 B_j$ for some $ij\in F$. We claim that $G_2[B]$ is far from $r$-partite.
 
 \begin{claim}\label{clm:notpart}
  $G_2[B]$ cannot be made $r$-partite by deleting any $2cp^{-1}n$ edges.
 \end{claim}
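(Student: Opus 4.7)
The plan is to fix an arbitrary $r$-colouring $\chi\colon B\to[r]$ and exhibit more than $2cp^{-1}n$ edges of $G_2[B]$ that are monochromatic under $\chi$, forcing any $r$-partition to require more than $2cp^{-1}n$ deletions. The key ingredients are the pigeonhole principle (applied inside each part $B_i$), the hypothesis that $F$ is not $r$-partite, and property~\ref{G1nice:nosparse} of $G_1$.

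The main step is the following. For each $i\in[\ell]$, the set $B_i$ is partitioned by $\chi$ into at most $r$ colour classes, so by pigeonhole there is some colour $c(i)\in[r]$ with
\[
 U_i:=\{b\in B_i:\chi(b)=c(i)\},\qquad |U_i|\ge\frac{|B_i|}{r}=\frac{n}{2r\ell}.
\]
This defines a map $c\colon[\ell]\to[r]$. Since $F$ is not $r$-partite, $c$ cannot be a proper colouring of $F$, so there exist $i,j\in[\ell]$ with $ij\in E(F)$ and $c(i)=c(j)$. Because $K=8r\ell$, we have $|U_i|,|U_j|\ge n/(2r\ell)=4n/K\ge 2n/K$, so property~\ref{G1nice:nosparse} applies to the pair $(U_i,U_j)$, yielding
\[
 e_{G_1}(U_i,U_j)>2cp^{-1}n.
\]

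Now every edge of $G_1$ with one endpoint in $U_i\subset B_i$ and the other in $U_j\subset B_j$ is preserved in $G_2$, since $ij\in E(F)$ and these are exactly the $B$-edges $G_2$ keeps between $B_i$ and $B_j$. All of these edges are monochromatic in colour $c(i)=c(j)$, so any $r$-partition consistent with $\chi$ requires removing all of them, which is strictly more than $2cp^{-1}n$ edges. Since $\chi$ was arbitrary, $G_2[B]$ cannot be made $r$-partite by deleting any $2cp^{-1}n$ edges, proving the claim. There is essentially no obstacle here beyond lining up the constants $K=8r\ell$ correctly so that the pigeonholed sets $U_i,U_j$ are large enough to invoke~\ref{G1nice:nosparse}; the non-$r$-partiteness of $F$ does all the structural work.
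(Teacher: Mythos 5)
Your proof is correct and is essentially the paper's own argument: both take the majority colour class in each $B_i$ (your pigeonhole step is exactly the paper's ``majority colouring''), use the non-$r$-partiteness of $F$ to find $ij\in E(F)$ whose majority classes coincide, and then invoke property~\ref{G1nice:nosparse} on the two resulting disjoint sets of size at least $2n/K$. No gaps; the constant bookkeeping ($|U_i|\ge n/(2r\ell)\ge 2n/K$) matches the paper's.
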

 \begin{claimproof}
  Given a (not necessarily proper) $r$-colouring $\chi:B\to [r]$, we
  define a majority $r$-colouring $\chi':[\ell]\to[r]$ by setting
  $\chi'(i)$ equal to the smallest $j$ such that $\big|\chi^{-1}(j)\cap
  B_i\big|\ge |B_i|/r$. Since $F$ is not $r$-partite, the colouring $\chi'$
  is not proper, and hence there exists $ij\in F$ such that
  $\chi'(i)=\chi'(j)$. The subsets $B'_i$ and $B'_j$ of $B_i$ and $B_j$
  respectively which are given colour $\chi'(i)$ by $\chi$ are by
  construction disjoint and each of size at least $n/(4r\ell)=2n/K$. Thus by Lemma~\ref{lem:G1nice}\ref{G1nice:nosparse} we have $e(B'_i,B'_j)>2cp^{-1}n$, and the claim follows.
 \end{claimproof}
 
 Now we let $G_3$ be obtained from $G_2$ by deleting all edges of $G_2[B]$ which use a vertex $b\in B$ with $\deg(b,B)\ge pp'n$ or $\deg(b,B) \leq pp'n/10$. By Lemma~\ref{lem:G1nice}\ref{G1nice:degB} the number of edges deleted is at most $cp^{-1}n$.
 
 Finally, we let $H$ be obtained from $G_3$ by deleting all edges $ab$ of $G_3$ with $a\in A$ and $b\in B$ such that there exists $b'\in B$ with $abb'$ a triangle of $G_3$. Observe that since $A$ is independent in $H$, any triangle of $H$ has at most one vertex in $A$. By construction of $H$, there are no triangles with exactly one vertex in $A$, so any triangle of $H$ has all three vertices in $B$. But then the three vertices of a triangle in $H$ would lie in sets $B_i$, $B_j$ and $B_k$ with $ijk$ a triangle in $F$, and we chose $F$ to be a triangle-free graph. We conclude that $H$ is triangle-free. Furthermore, if $H$ can be made $r$-partite by deleting $cp^{-1}n$ edges, then certainly $H[B]$ can be made $r$-partite by deleting $cp^{-1}n$ edges. But since we deleted at most $cp^{-1}n$ edges from $G_2[B]$ in order to obtain $G_3[B]$, and no further edges to obtain $H[B]$, this implies $G_2[B]$ can be made $r$-partite by deleting at most $2cp^{-1}n$ edges, in contradiction to Claim~\ref{clm:notpart}.
 
 It remains only to show that $\delta(H)\ge\big(\tfrac12-\gamma\big)pn$. First consider any vertex $b\in B$. By Lemma~\ref{lem:G1nice}\ref{G1nice:deg} we have $\deg_{G_1}(b,A)\ge\big(\tfrac12-\eps)pn$. By construction, no edge from $b$ to $A$ was deleted in creating $G_2$ from $G_1$, or $G_3$ from $G_2$. By construction of $G_3$, either $\deg_{G_3}(b,B)=0$, in which case no edge from $b$ to $A$ was deleted in creating $H$, or we have $\frac{1}{10}pp'n \le \deg_{G_1}(b,B)\le pp'n$. By Lemma~\ref{lem:G1nice}\ref{G1nice:triB} we conclude that the total number of edges deleted from $b$ to $A$ in forming $H$ from $G_3$ is at most
 \[pn\big(1-(1-p)^{pp'n}\big)\le
 p^3p'n^2\le64r^2\ell^2cpn\leByRef{construction:eps}\tfrac12\gamma p n\,,\]
because $c<\eps$.
  Thus we have \[d_H(b)\ge\big(\tfrac12-\eps\big)pn-\tfrac12\gamma pn\geByRef{construction:eps}\big(\tfrac12-\gamma\big)pn\]
 as desired.
 
 Now consider any $a\in A$. Again by Lemma~\ref{lem:G1nice}\ref{G1nice:deg} we have $\deg_{G_1}(a,B)\ge\big(\tfrac12-\eps)pn$. Again no edges from $a$ to $B$ are deleted in forming $G_2$ or $G_3$. In forming $H$ from $G_3$, we delete edges from $a$ to each of $b$ and $b'$ in $B$ whenever $abb'$ forms a triangle in $G_3$. Since $G_3[B]$ is a subgraph of $G_1[B]$, this means that we delete at most $2e\big(N_{G_1}(a;B)\big)$ edges from $a$ to $B$, which by Lemma~\ref{lem:G1nice}\ref{G1nice:nbhB} is at most $2p'p^3n^2$. Thus we have
 \[d_H(a)\ge\big(\tfrac12-\eps\big)pn-2p'p^3n^2\geByRef{construction:eps}\big(\tfrac12-\tfrac12\gamma\big)pn-\tfrac12\gamma pn=\big(\tfrac12-\gamma\big)pn\,,\]
 which completes the proof.
\end{proof}

We now give the proof of Lemma~\ref{lem:G1nice}.

\begin{proof}[Proof of Lemma~\ref{lem:G1nice}]
  Choose $c= \min\{\frac{1}{2}\eps,K^{-2}\}$. These
  properties follow from easy applications of the Chernoff bound,
  Lemma~\ref{Chernoff}. We omit the proof of~\ref{G1nice:deg} as it is
  standard.

  % \ref{G1nice:deg}: For $a \in A$ the expected number of vertices in $B$
  % which are adjacent to $a$ is $pn/2$ and so by the Chernoff bound,
  % Lemma~\ref{Chernoff}, the probability that $a$ does not have
  % $(\frac{1}{2} \pm \eps)pn$ neighbours is less than $2\exp(\frac{-
  % 2\eps^2}{3}pn)$.  Similarly for any vertex $b \in B$ the probability
  % that $b$ does not have $(\frac{1}{2} \pm \eps)pn$ neighbours in $A$ is
  % less than $2\exp(\frac{- 2\eps^2}{3}pn)$.  By a union bound over all
  % vertices, property~\ref{G1nice:deg} holds with probability at least
  % $1-2n\exp(\frac{- 2\eps^2}{3}pn)$.  This tends to 1 as $n$ tends to
  % infinity as $p= \omega(\ln n/ n)$.

  \smallskip
 
  \ref{G1nice:nbhB}: By property~\ref{G1nice:deg} we may assume that there
  are at most $(\frac{1}{2} + \eps)pn$ vertices in $N(a,B)$ for each $a \in
  A$.  Now consider an arbitrary set $S$ of $(\frac{1}{2} + \eps)pn$
  vertices in $B$.  The expected number of edges in $S$ is
  $\binom{|S|}{2}p'p\le \frac{1}{2}|S|^2 p'p$. By Lemma~\ref{Chernoff} the
  probability that $S$ has more than $|S|^2p'p \le p'p^3n^2$ edges is less
  than
  $\exp(\frac{-1}{6}|S|^2p'p)\le\exp(-\frac{1}{100}p'p^3n^2)=\exp(-\frac{1}{100}K^2cpn)$.
  % $=\exp(\frac{-1}{24}(1+2\eps)^2p'p^3n^2)$.
  Hence the claimed property follows by
  taking a union bound over all $a \in A$.
  % we conclude that the probability that $e(N(a,B)) \leq p'p^3n^2$ for all $a \in A$ is at least
  % $1- \frac{n}{2}\exp(\frac{-1}{24}(1+2\eps)^2p'p^3n^2)$.  Since
  % $p'=64r^2\ell^2 cp^{-2}n^{-1}$ this is equal to $1- \frac{n}{2}\exp(
  % \frac{-8}{3}(1+2\eps)^2r^2\ell^2 c pn)$.  Since $p= \omega(\ln n/ n)$
  % this tends to~$1$.
 
  \smallskip

  \ref{G1nice:triB}: Assume that we first only reveal the edges of
  $G(n,p,p')$ in~$B$ and consider a vertex $b \in B$ for which $\deg(b,B)\geq
  \frac{1}{10}p'pn$. Now reveal also the edges between~$A$ and~$B$.
  Then a fixed $a\in A$ forms a triangle with $b$
  in which the third vertex is also in $B$ with probability 
  $p\cdot (1-(1-p)^{\deg(b,B)})$.
  Therefore the expected number of such $a\in A$
  is \[\frac{1}{2}n p(1-(1-p)^{\deg(b,B)})\ge
  \frac{1}{2}n p\cdot (1-(1-p)^{p'pn/10})\ge \frac{1}{40}p'p^3n^2\,,\]
  % This is because $a$ is required to be adjacent
  % to $b$ which occurs with probability $p$ and also must have a neighbour
  % in $N(b,B)$ which happens independently with probability
  % $(1-(1-p)^{\deg(b,B)})$.  
  where the inequality follows from
  $1-(1-p)^{p'pn/10}\ge\frac{1}{10}p'p^2n-\frac{1}{100}{p'}^2p^4n^2\ge\frac{1}{20}p'p^2n$,
  which uses $p'=K^2cp^{-2}n^{-1}$.  Hence by Lemma~\ref{Chernoff} the
  probability that there are more than $pn(1-(1-p)^{\deg(b,B)})$ such $a\in
  A$ is less than $\exp(-10^{-3}p'p^3n^2)=\exp(-10^{-3}K^2cpn)$.  Taking a union bound over
  vertices in~$B$ the claimed property follows.

  \smallskip
 
  \ref{G1nice:degB}: 
  % Let $\alpha=\frac{5}{352}r^{-2}\ell^{-2}$.  By
  % Lemma~\ref{Chernoff} and a union bound, for any $S \subset B$ with
  % $|S|=\alpha n$ there are at most $(1+ \eps)\binom{\alpha n}{2}p'p$ edges
  % in $S$ with probability at least $1-2^n exp(\frac{-4}{12}\binom{\alpha
  %   n}{2}p'p)$, which tends to 1 for $p \leq (16 \alpha^2 r^2 \ell^2 c)/(9
  % \ln 2)$.  Similarly, $e(S,B \setminus S)=(1\pm
  % \frac{1}{2})\alpha(\frac{1}{2}-\alpha)p'pn^2$ for all such $S$ with
  % probability at least $1-2^{n+1}
  % \exp(\frac{-1}{12}\alpha(\frac{1}{2}-\alpha)p'pn^2)\ge 1-2^{n+1}
  % \exp(\frac{-4}{3}\alpha r^2 \ell^2 c p^{-1} n)$, which tends to 1 for $p
  % \leq ( \alpha r^2 \ell^2 c)/( \ln 2)$.
  Two applications of Lemma~\ref{Chernoff} and simple union bounds show
  that a.a.s.\ for any $S\subset B$ with $|S|=n/(2K^2)$ we have
  \begin{align}
    e(S) &\le (1+\eps)p'p\binom{|S|}{2} \quad\text{and} \label{G1nice:eS} \\
    e(S,B\setminus S) &= (1\pm\eps)p'p|S||B\setminus S|\,, \label{G1nice:eSB}
  \end{align}
  since $p\le\eps^2c/(10^4K^2)$. This implies that for any $S\subset B$
  with $|S|\le n/(2K^2)$ the number of edges in~$B$ adjacent to~$S$ is at most
  \[(1+\eps)p'p\binom{n/(2K^2)}{2} +
  (1+\eps)p'p\frac{n}{2K^2}\Big(\frac{n}{2}-\frac{n}{2K^2}\Big)
  \le(1+\eps)p'p\frac{n}{2K^2}\cdot\frac{n}{2}
  \le \frac12 cp^{-1}n \,.
  \]
  Hence, with $C=\{b\in B\colon \deg(b,B)\leq
  \frac{1}{10}p'pn\}$ and $D=\{b\in B\colon \deg(b,B)\ge
  p'pn\}$, the claimed property follows if $|C|\le\ n/(2K^2)$ and $|D|\le\ n/(2K^2)$.

  So assume that there is $C'\subset C$ with
  $|C'|=n/(2K^2)$.  But then $e(C',B\setminus C') \le
  |C'|\frac{1}{10}p'pn\le \frac{1}{20
    K^2}p'pn^2$, contradicting~\eqref{G1nice:eSB}. Similarly, assuming
  there is $D'\subset D$ with $|D'|=n/(2K^2)$ and using~\eqref{G1nice:eS} we get
  \[e(D',B\setminus D')\ge |D'|p'pn - 2
  e(D')\ge\frac{n^2p'p}{2K^2}-(1+\eps)p'p\Big(\frac{n}{2K^2}\Big)^2\ge\frac{1}{3K^2}p'pn^2\,,\]
  contradicting~\eqref{G1nice:eSB}.

  % If there are more than $\alpha n$ vertices consider
  % some set $C'\subset C$ with $|C'|=\alpha n$.  We must have
  % $e(C',B\setminus C') \leq \frac{1}{10}\alpha p'pn^2$ by the degree
  % condition on $C$ however as this is less than
  % $\frac{1}{2}\alpha(\frac{1}{2}-\alpha)p'pn^2$ we have seen this does not
  % happen.  Therefore we have that $|C| < \alpha n $ which implies at most
  % $\alpha \frac{1}{10}p'pn^2$ edges are adjacent to $C$.
  %
  % If
  % $|D| \geq \alpha n$ consider some set $D'\subset D$ of $\alpha n$ of
  % them.  Since $D'$ supports at most $\frac{3}{2}\binom{\alpha n}{2}p'p$
  % edges the degree condition on $D$ implies that $e(D',B \setminus D')
  % \geq \alpha p'pn^2 - 2e(D') \geq \alpha p'pn^2 -3\binom{\alpha n}{2}p'p
  % \geq \alpha p'pn^2(1-\frac{3}{2}\alpha)$.  Again we have seen this does
  % not happen and so $|D| < \alpha n$ implying that at most
  % $\frac{3}{2}\binom{\alpha n}{2}p'p +
  % \frac{3}{2}\alpha(\frac{1}{2}-\alpha)p'pn^2$ edges are adjacent to $D$.
  % This is at most $\alpha p'pn^2$ and hence there are at most
  % $\frac{11}{10}\alpha p'pn^2 \leq cp^{-1}n$ are adjacent to $C \cup D$
  % as
  % required.

  \smallskip
 
  \ref{G1nice:nosparse}: For any disjoint $U,V \subset B$ each with at
  least $\frac{2n}{K}$ vertices the expected number of edges between
  $U$ and $V$ is $|U||V|p'p \geq \frac{4n^2}{K^2}p'p=4cp^{-1}n$, so
  the result follows from another application of Lemma~\ref{Chernoff} and a
  union bound (using $p\le \eps^2 c/(10^4 K^2)$).
  % By Theorem~\ref{Chernoff} applied with $\delta= \frac{1}{2}$ the
  % probability that there are fewer than $2cp^{-1}n$ edges between $U$ and
  % $V$ is less than $\exp(\frac{-1}{3}cp^{-1}n)$.  By a union bound over
  % the at most $2^n$ pairs of subsets of $B$ we see that
  % property~\ref{G1nice:nosparse} holds with probability at least $1- 2^n
  % \exp(\frac{-1}{3}cp^{-1}n)$, which tends to~$1$ as $p\le c/10^4$.
\end{proof}

\section{Auxiliary properties of \texorpdfstring{$G(n,p)$}{G(n,p)}}

In this section we list some typical properties of $G(n,p)$, which we shall
use in the proofs of Theorems~\ref{Thm:Main} and~\ref{Thm:ChromaticThreshold}.

\begin{lemma}\label{Lem:Basics} 
  For any $0<\eps<\frac{3}{2}$ and $M\in \NN$ and any
  $p=\omega\big(\frac{\ln n}{n}\big)$, the graph $\Gamma=G(n,p)$ a.a.s.\
  satisfies the following.
  \begin{enumerate}[label=\abc]
  \item\label{Basics:Nbs} 
    $\deg_\Gamma(v)=(1\pm\eps)pn$ for every $v\in V(\Gamma)$.
  \item\label{Basics:Sets} $e_\Gamma(A) \leq
    \max\{|A|^2p,9n\}$ for every $A\subset V(\Gamma)$.
  \item\label{Basics:Pairs} $e_\Gamma(A,B)=(1\pm \eps)p|A||B|$ for every
    disjoint $A,B\subset V(\Gamma)$ with $|A|,|B|\ge\frac{n}{M}$.  If on the
    other hand $|A|<M^{-1}n$, then $e_\Gamma(A,B)\le (1+\eps)pM^{-1}n^2$.
  \item\label{Atypical:Vertices} For any $A\subset V(\Gamma)$ with $|A|\geq
    \frac{n}{M}$ all but at most $10M\eps^{-2}p^{-1}$ vertices in
    $V(\Gamma)$ have $(1\pm \eps)p|A|$ neighbours in $A$.
\end{enumerate}
\end{lemma}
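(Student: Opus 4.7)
The plan is that each of (a)--(d) is a concentration statement for a Binomial random variable, so the strategy throughout is to apply the Chernoff bound (Lemma~\ref{Chernoff}) to the relevant event and then union-bound. The hypothesis $p=\omega(\log n/n)$ gives $pn\gg\log n$, which provides ample slack.

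Parts (a) and (c) are the most direct. For (a), $\deg_\Gamma(v)\sim\Bin(n-1,p)$, so Chernoff gives a deviation probability of $2\exp(-\Omega(\eps^2 pn))=n^{-\omega(1)}$ and a union bound over the $n$ vertices succeeds. For the main half of (c), when $|A|,|B|\ge n/M$ the variable $e_\Gamma(A,B)\sim\Bin(|A||B|,p)$ has mean $\ge pn^2/M^2=\omega(n)$, which gives a Chernoff deviation probability smaller than $4^{-n}$, beating the $\binom{n}{|A|}\binom{n}{|B|}\le 4^n$ union bound. For the small-$A$ half of (c), it suffices to observe
\[
e_\Gamma(A,B)\le e_\Gamma\bigl(A,V(\Gamma)\setminus A\bigr)\le\sum_{v\in A}\deg_\Gamma(v)\le(1+\eps)pn\cdot|A|\le(1+\eps)pM^{-1}n^2,
\]
using (a) in the third step.

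For (b), the Chernoff bound must be tightened a little to beat the $2^n$ union bound over subsets~$A$. If $|A|\le\sqrt{18n}$ then $e_\Gamma(A)\le\binom{|A|}{2}\le 9n$ trivially. Otherwise, applying Chernoff to $e_\Gamma(A)\sim\Bin(\binom{|A|}{2},p)$ with target $\max(|A|^2p,9n)$ (falling back to the sharper tail $\Prob[X\ge t]\le(e\mu/t)^te^{-\mu}$ when the deviation exceeds the regime of Lemma~\ref{Chernoff}) gives a failure probability of the form $\exp(-c_0 n)$ with some constant $c_0>\log 2$ in both sub-cases $|A|^2p\ge 9n$ and $|A|^2p<9n$, which the $2^n$ union bound survives.

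Part (d) is the subtlest and is where the main obstacle lies, because one needs a statement that holds for every admissible~$A$ simultaneously while the $2^n$ union bound over~$A$ is essentially tight. The key observation is that once $A$ is fixed, the variables $\deg_\Gamma(v,A)$ for $v\notin A$ are \emph{independent} (the relevant edges are pairwise vertex-disjoint), so if $X_A^+$ (resp.~$X_A^-$) denotes the number of $v\notin A$ with $\deg_\Gamma(v,A)>(1+\eps)p|A|$ (resp.~$<(1-\eps)p|A|$), then each $X_A^\pm$ is itself a sum of at most~$n$ independent indicators, each of success probability at most $q:=2\exp(-\eps^2 p|A|/3)\le 2\exp(-\eps^2 pn/(3M))$ by a one-vertex Chernoff. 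Taking $k=5M\eps^{-2}p^{-1}$, the tail bound $\Prob[X_A^\pm\ge k]\le\binom{n}{k}q^k\le(enq/k)^k$ combined with $pn\gg\log n$ yields a probability of size at most $\exp(-\tfrac{5n}{3}+o(n))$, which comfortably beats the $2^n$ union bound over choices of~$A$ with $|A|\ge n/M$; combining $X_A^+$ and $X_A^-$ then doubles the threshold to give the stated bound of $10M\eps^{-2}p^{-1}$ on the number of atypical vertices.
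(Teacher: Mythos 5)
Your treatments of \ref{Basics:Nbs}, \ref{Basics:Sets} and \ref{Basics:Pairs} are correct and in the same spirit as the paper, which only writes out \ref{Basics:Sets} and declares the rest standard; the only cosmetic difference is that for small $|A|$ in \ref{Basics:Sets} the paper compares with a set of size exactly $3p^{-1/2}n^{1/2}$ by monotonicity, whereas you invoke the sharper Poisson-type tail $\Prob[X\ge t]\le(e\mu/t)^te^{-\mu}$ — both work, and your deterministic reduction of the small-$|A|$ half of \ref{Basics:Pairs} to \ref{Basics:Nbs} is clean.

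The one genuine gap is in \ref{Atypical:Vertices}. The statement (and its later uses) quantifies over \emph{all} vertices of $V(\Gamma)$, including those of $A$ itself: in Lemma~\ref{Lem:Atypical} one needs $\deg(u,A)$ to be typical for the endpoints $u$ of edges inside $A=X$, and in the proof of Theorem~\ref{Thm:ChromaticThreshold} the set $W$ must catch vertices of $V_i$ with atypical degree into $V_i$. Your independence argument covers only $v\notin A$: for $v,v'\in A$ the single potential edge $vv'$ contributes to both $\deg(v,A)$ and $\deg(v',A)$, so the indicators are not independent and the bound $\Prob[X_A^\pm\ge k]\le\binom{n}{k}q^k$ is unjustified for the count over $v\in A$. (A smaller quibble: even for $v,v'\notin A$ the relevant edge sets are edge-disjoint, not vertex-disjoint — they share $A$ — though edge-disjointness is all you need.) The fix is standard but must be said: for a candidate bad set $B\subset A$ with $|B|=k$ one has $\sum_{v\in B}\deg(v,A)=e(B,A\setminus B)+2e(B)$, a sum of independent variables taking values in $\{0,1,2\}$ with mean $pk(|A|-1)$, to which a weighted Chernoff/Bernstein estimate applies and, for $k=\Theta(M\eps^{-2}p^{-1})$, still beats the $2^n\binom{n}{k}$ union bound over $A$ and $B$; alternatively one can control $e(B)$ separately via part \ref{Basics:Sets}. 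Note you cannot simply pass to $\deg(v,A\setminus B)$ and absorb the difference, because $k=\Theta(p^{-1})$ need not be $o(\eps p|A|)$ when $p$ is near the lower threshold $\log n/n$.
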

\begin{proof}
  These properties follow from standard applications of the Chernoff
  bound, Lemma~\ref{Chernoff}. Here we only show~\ref{Basics:Sets}; the
  other properties follow similarly.

  % By a simple application of Chernoff's inequality and a union bound
  % part~\ref{Basics:Nbs} each vertex has $(1 \pm \eps/2)p(n-1)$ neighbours
  % with probability at least $1-n\exp\big(\frac{-\eps^2}{12}p(n-1)\big)$
  % since the expected number of neighbours at each vertex is $p(n-1)$.  This
  % tends to zero as $n$ tends to infinity and since $(1 \pm \eps)pn$ is
  % contained in $(1 \pm \eps/2)p(n-1)$ for large enough $n$ the result
  % holds.
  %
  % For any arbitrary disjoint pair of vertex subsets $A,B$ the expected
  % number of edges between $A$ and $B$ is $|A||B|p$.  If $|A|,|B|\geq
  % M^{-1}n$ then with probability less than
  % $2\exp\big(\frac{-\eps^2}{3}|A||B|p\big)\leq
  % 2\exp\big(\frac{-\eps^2}{3M^2}pn^2\big)$there are $(1\pm \eps)|A||B|p$
  % edges between $A$ and $B$.  Taking a union bound over pairs of subsets
  % the probability that some such pair $A,B$ does not have $(1\pm
  % \eps)|A||B|p$ edges is less than $2^{2n}\cdot 2 \cdot
  % \exp\big(\frac{-\eps^2}{3M^2}pn^2\big)$ which tends to zero for
  % $p=\omega(n^{-1})$.  If either of $A$ or $B$ are smaller than $M^{-1}n$
  % we expect fewer than $M^{-1}pn^2$ edges and without loss of generality we
  % may assume in fact that $|A|=M^{-1}n$ and $|B|=(1-M^{-1})n$.  The
  % probability of some such pair of sets having more than $(1+\eps)M^{-1}
  % pn^2$ is less than $2^{2n}\cdot \exp\big(\frac{-\eps^2}{3M}pn^2\big)$
  % which, again, tends to zero for $p=\omega(n^{-1})$. This proves
  % part~\ref{Basics:Pairs}.

  Suppose that~$A$ is an arbitrarily chosen vertex subset.  The expected
  number of edges in $A$ is $\binom{|A|}{2}p\leq |A|^2p$.  By
  Lemma~\ref{Chernoff} the probability that there are more than $|A|^2p$
  edges in $A$ is less than $\exp(\frac{-1}{3}\binom{|A|}{2}p)\leq
  \exp(\frac{-1}{7}|A|^2p)$.  For $|A|\geq 3p^{-1/2}n^{1/2}$ this
  probability is less than $\exp(\frac{-9}{7}n)$ and so taking a union
  bound over all subsets the probability that Property~\ref{Basics:Sets}
  fails for a set of size at least $3p^{-1/2}n^{1/2}$ is less than $2^n
  \exp(\frac{-9}{7}n)$, which tends to zero.  A set $A$ with
  $|A|<3p^{-1/2}n^{1/2}$ is less likely to have more than $9n$ edges than a
  set $B$ with $|B|=3p^{-1/2}n^{1/2}\leq n$.  Therefore, since $|B|^2p=9n$
  and by the previous argument, the probability that a set $A$ of size less
  than $3p^{-1/2}n^{1/2}$ has more than $9n$ edges tends to zero.
  % \ref{Atypical:Vertices}: For an arbitrary set $A$ with $|A|\geq
  % \frac{n}{3}$ and an arbitrary vertex set $B$ with $|B|\geq
  % 30\eps^{-2}p^{-1}$, by Chernoff's inequality, the probability that all
  % $v\in B$ do not have $(1 \pm \eps)p|A|$ neighbours in $A$ is less than
  % $2^{|B|}\exp(\frac{-\eps^2}{3}p|A||B|)\leq
  % 2^{30\eps^{-2}p^{-1}}\exp(-3n)$.  Taking a union bound over choices of
  % $A$ and $B$ the probability that \ref{Atypical:Vertices} fails is less
  % than $2^n 2^n 2^{30\eps^{-2}p^{-1}}\exp(-3n)$ which tends to zero as $n$
  % tends to infinity for $p=\omega(1/n)$.
\end{proof}

The next lemma shows that for any partition $V\big(G(n,p)\big)=A\cup B$ with neither $A$ nor $B$ very small, most edges of $G(n,p)$ have `typical' neighbourhoods in each set. 

\begin{lemma}\label{Lem:Atypical} For any $0<\eps<\frac{1}{2}$, $M\in \NN$ and $p=\omega\big(\frac{\ln n}{n}\big)$ in $\Gamma=G(n,p)$ a.a.s.\ 
for any two subsets $A,B$ of $V(\Gamma)$ with $\frac{n}{M} \leq |A|,|B|$
all but at most $10^3M\eps^{-2}p^{-1}n$ edges $uv$ in $\Gamma$ satisfy all of the following:
\begin{itemize}
\item $\deg_{\Gamma}(u,A),\deg_{\Gamma}(v,A)=(1 \pm \eps)p|A|$.
\item $\deg_{\Gamma}(u,B),\deg_{\Gamma}(v,B)=(1 \pm \eps)p|B|$.
\item $\deg_{\Gamma}(u,v,B)\geq (1-\eps)p^2|B|$.
\end{itemize}
\end{lemma}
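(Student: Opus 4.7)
The proof splits by the three bulleted conditions. For bullets (1) and (2), which are individual-vertex degree conditions, I would apply Lemma~\ref{Lem:Basics}\ref{Atypical:Vertices} (with parameter $\eps$) to each of $A$ and $B$. This bounds the number of vertices with atypical degree into $A$ or into $B$ by $20M\eps^{-2}p^{-1}$ in total. By Lemma~\ref{Lem:Basics}\ref{Basics:Nbs}, every vertex has degree at most $(1+\eps)pn$, so these atypical vertices are incident to at most $40M\eps^{-2}n$ edges, which is absorbed into the target bound $10^3M\eps^{-2}p^{-1}n$ (using $p\le1$).

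Bullet (3), the codegree condition, is the substantive case. Fix $B$ with $|B|\ge n/M$. The key observation is that for each pair $(u,v)$, the codegree $\deg_\Gamma(u,v,B) = |N_\Gamma(u)\cap N_\Gamma(v)\cap B|$ depends only on the edges from $u,v$ to $B$, which are disjoint from the edge $uv$ itself. Hence $\deg_\Gamma(u,v,B) \sim \Bin(|B\setminus\{u,v\}|, p^2)$ independently of the event $\{uv\in E(\Gamma)\}$, and Chernoff yields
\[
    \Pr\big[\deg_\Gamma(u,v,B) < (1-\eps)p^2|B|\big] \le \exp\!\big(-\eps^2 p^2 n /(3M)\big).
\]
Summing over the $\binom{n}{2}$ pairs and weighting by $\Pr[uv\in E]=p$ gives an expected bad-edge count (for a fixed $B$) of at most $\binom{n}{2}p\exp(-\eps^2 p^2 n/(3M))$. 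Substituting $x=\eps^2 p^2 n/(3M)$ and using $xe^{-x}\le e^{-1}$ bounds this expectation by $O(M\eps^{-2}p^{-1}n)$ for every admissible $B$.

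The main obstacle is converting this pointwise expectation into an a.a.s.\ statement uniform over every choice of $B$. Two observations handle this. First, when $p^2 n = O(M\eps^{-2})$ (equivalently $p=O(n^{-1/2})$), the target bound $10^3M\eps^{-2}p^{-1}n$ already exceeds $|E(\Gamma)|$ a.a.s., so the conclusion is vacuous in this sparse regime. In the complementary regime the Chernoff tail $\exp(-\eps^2 p^2 n/(3M))$ is sufficiently small to enable a concentration argument via a two-stage edge exposure: first reveal the edges incident to $B$ (which determine the set $P$ of pairs with atypical codegree), then reveal the edges inside $V\setminus B$ so that the number of bad edges disjoint from $B$ becomes $\Bin(|P_0|,p)$ conditioned on Stage~1, for some $P_0\subseteq P$. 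Chernoff on this conditional Binomial, together with a variance or McDiarmid control on $|P|$, then gives tail bounds sharp enough to survive a union bound over the $2^n$ choices of $B$. The delicate part is calibrating these bounds in the intermediate range where $p$ sits just above $n^{-1/2}$, where neither the trivial edge-count argument nor the easy union bound suffices on its own.
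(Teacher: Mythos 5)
Your handling of the two degree bullets via Lemma~\ref{Lem:Basics}\ref{Atypical:Vertices} and \ref{Basics:Nbs} is fine and is essentially what the paper does. The gap is in the codegree bullet. Your expectation computation for a fixed $B$ is correct, and so is the observation that the statement is vacuous when $p\le C(M,\eps)n^{-1/2}$, but the plan for the remaining range does not close. To survive a union bound over the $2^n$ choices of $B$ you need, for each fixed $B$, failure probability at most $2^{-n}o(1)$. A direct union bound over pairs requires $\exp(-\eps^2p^2n/(3M))\le 2^{-n}n^{-2}$, i.e.\ $p=\Omega(1)$. Your two-stage exposure reduces the problem to controlling $|P|$, the number of atypical pairs after revealing the edges at $B$; but $|P|$ is a function of the $\Theta(pn|B|)$ edges at $B$, and flipping a single edge $uw$ with $w\in B$ can change $\deg_\Gamma(u,v,B)$ for every $v$, hence change $|P|$ by up to $n$. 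With that Lipschitz constant McDiarmid gives deviations vastly exceeding the mean, and a variance or Markov bound only yields failure probability $o(1)$, not $2^{-n}$. So the entire range $n^{-1/2}\ll p\ll 1$ --- which you yourself flag as ``delicate'' --- is left open, and that range is the whole content of the lemma. (You would also need a separate treatment of edges with an endpoint in $B$, which your two-stage split does not cover.)

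The paper avoids the union bound over $B$ entirely, by two devices. First, whether an edge $uv$ is bad for the third bullet depends on $B$ only through $P=N_\Gamma(v)\cap B$, a subset of $N_\Gamma(v)$, so one can union bound over the $2^{O(pn)}$ subsets $P\subset N_\Gamma(v)$ rather than over all $B$. Second, one does not ask for individual codegrees to be concentrated (the tail $\exp(-\eps^2p^2n/(3M))$ is far too large for small $p$); instead one bounds the probability that some single vertex $v$ has a set $Q$ of $100M\eps^{-2}p^{-1}$ neighbours \emph{all} of which have atypically small degree into $P$. Since the relevant edge sets for distinct $u\in Q$ are essentially disjoint, this probability is $\exp(-\Omega(\eps^2p|P||Q|))=\exp(-\Omega(pn))$, which beats the union bound over $v$, $P$ and $Q$. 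This shows each good vertex $v$ has at most $100M\eps^{-2}p^{-1}$ bad co-neighbours, and summing over $v$ gives the $O(M\eps^{-2}p^{-1}n)$ bound in one step, with no case split on $p$.
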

\begin{proof}
  By Lemma~\ref{Lem:Basics}\ref{Atypical:Vertices} we may assume that all
  but a set~$S$ of at most $20M\eps^{-2}p^{-2}$ vertices in $\Gamma$ have
  $(1\pm\eps)p|B|$ neighbours in~$B$ and $(1\pm\eps)p|A|$ neighbours in~$A$.  By Lemma~\ref{Lem:Basics}\ref{Basics:Pairs} we further may assume
  that we have 
  \begin{equation}\label{eq:Atypical:eSA}
    e(S,A)\leq (1+\eps)p \cdot 20M\eps^{-2}p^{-2}n = 20(1+\eps)M\eps^{-2}p^{-1}n\,.
  \end{equation}
  We now consider
  an arbitrary vertex $v$ in $V\setminus S$ and two arbitrary sets
  $P,Q\subset N(v)$ satisfying $|P|\geq (1-\frac12\eps)p|B|$ and $|Q|\geq
  100M\eps^{-2}p^{-1}$.  The probability that all vertices in $Q$ have fewer
  than $(1-\eps)p^2|B|\leq (1-\frac12\eps)p|P|$ neighbours in $P$ is less than
  \[\exp\Big(-\frac{\eps^2}{12}p|P||Q|\Big)\leq
  \exp\Big(-\frac{\eps^2}{12}p\cdot\frac12p\frac nM\cdot 100M\eps^{-2}p^{-1}\Big)
  \leq \exp(-3pn)\,.\] 
  Since
  $P,Q\subset N(v)$ we have $|P|,|Q|\le(1+\eps)pn$.  So, taking a union
  bound, the probability that there exist $v,P,Q$ as above
  % such that all vertices in $Q$ have fewer than $(1-
  % 2\eps)p^2|B|\leq (1-\eps)p|P|$ neighbours in $P$
  is less than $n 2^{(1+\eps)pn} 2^{(1+\eps)pn}\exp(-3pn)$ which tends to
  zero as $n$ tends to infinity for $p=\omega(\log n /n)$.  Hence a.a.s.\ each
  vertex $v$ in $V \setminus S$ has at most $100M\eps^{-2}p^{-1}$ neighbours $u$ such that
   $\deg(u,v,B)< (1-\eps)p^2|B|$. Summing over $v$ we obtain at most $100M\eps^{-2}p^{-1}n$ such edges, which along
  with the edges incident to~$S$ by~\eqref{eq:Atypical:eSA} gives at most $10^3M\eps^{-2}p^{-1}n$
  edges.
\end{proof}

The following lemma is crucial in the proofs of
Theorems~\ref{Thm:Main} and~\ref{Thm:ChromaticThreshold}. Before stating it we need some definitions. For any $s\in \NN$, the \emph{$s$-star} is the
star $K_{1,s}$. The vertex of degree~$s$ in the $s$-star is called its
\emph{centre}, all other vertices are its \emph{leaves}.
For $A\subset V(\Gamma)$ and $0<q,\eps<1$ 
we say that an $s$-star with centre~$x$ is \emph{$(q,\eps)$-bad} for~$A$ if 
there is $S\subset N_{\Gamma}(x,A)$ 
with $|S|\le qp|A|$
% with $|S|\le q(1-3\eps)p|A|$ 
such that each leaf~$y$ of the $s$-star satisfies $\deg_\Gamma(y,S)\ge
(1+\eps)qp^2|A|$; in other
words~$y$ has substantially more neighbours in~$S$ than expected. We also
say that~$S$ \emph{witnesses} this badness.

When we use this definition, we will choose a star with centre $x$ and set $S=N_\Gamma(x,A)\setminus N_H(x,A)$, where $H$ is a triangle-free subgraph of $\Gamma$ with large minimum degree, and we will choose our star such that that $N_\Gamma(y,S)$ is quite large for each leaf $y$. Now if the star is good it follows that $S$ itself must be quite large, so that the degree of $x$ in $H$ cannot be too large, leading to a contradiction to the minimum degree of $H$. The following lemma however implies that bad stars cover only $\mathcal{O}(p^{-1}n)$ edges, which is where the sharp bounds in Theorems~\ref{Thm:Main} and~\ref{Thm:ChromaticThreshold} come from.

\begin{lemma}\label{Lem:Stars} For every $0<\eps<1$ and every $p$ the random
  graph $G(n,p)$ a.a.s.\ satisfies the following. For every $A\subset
  V(\Gamma)$ with $\frac{n}{3}\leq |A|$, every $q$ with $\eps<q<1$, and
  every $s\geq 100q^{-1}\eps^{-2}p^{-1}$ there are fewer than
  $\frac{1}{2}p^{-1}$ vertex disjoint $s$-stars in $V(\Gamma)\setminus A$ which are $(q,\eps)$-bad
  for~$A$.
\end{lemma}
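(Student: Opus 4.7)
My plan is a first moment calculation. Fix $\eps\in(0,1)$ and write $\ell=\lceil p^{-1}/2\rceil$. I begin by observing that the statement is vacuous unless $\ell(s+1)\le n-|A|\le 2n/3$, which combined with $s\ge 100 q^{-1}\eps^{-2}p^{-1}\ge 100\eps^{-2}p^{-1}$ forces $p\ge c\eps^{-1}n^{-1/2}$ for some constant $c>0$, so I may assume $p=\Omega(n^{-1/2})$ in the non-trivial regime. Since any $(q,\eps)$-bad $s'$-star with $s'>s$ contains a $(q,\eps)$-bad $s$-star (restrict to any $s$ of its leaves), it suffices to prove the statement for the smallest $s=\lceil 100 q^{-1}\eps^{-2}p^{-1}\rceil$.

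Next, I take a union bound over subsets $A\subseteq V(\Gamma)$ with $|A|\ge n/3$ and over integer witness sizes $s^*\in\{1,\dots,\lfloor p|A|\rfloor\}$. For each $(A,s^*)$, I bound the expected number of ordered $\ell$-tuples of vertex-disjoint $s$-stars with centres $x_i\in V(\Gamma)\setminus A$, leaf sets $L_i\subset V(\Gamma)\setminus A$, and witnesses $S_i\subset N_\Gamma(x_i,A)$ of common size $s^*$ such that each $y\in L_i$ satisfies $\deg_\Gamma(y,S_i)\ge(1+\eps)qp^2|A|$, where $q:=\max\{\eps,s^*/(p|A|)\}$; any $\ell$ vertex-disjoint $(q,\eps)$-bad $s$-stars with common witness size $s^*$ give such a tuple. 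For any fixed such configuration, the centre--leaf edges (within $V(\Gamma)\setminus A$), the centre--witness edges, and the witness--leaf edges are pairwise disjoint in $\Gamma$, hence jointly independent. Using the Chernoff bound in the form $\Prob[X\ge(1+\delta)\mu]\le\exp(-\mu h(\delta))$ with $h(\delta)=(1+\delta)\ln(1+\delta)-\delta$, and checking by hand (splitting into $\delta\le 3/2$ and $\delta>3/2$) that $s^* h(\delta)\ge\eps^2 qp|A|/3$ uniformly in $s^*\le qp|A|$, each leaf's degree condition has probability at most $\exp(-\eps^2 qp^2|A|/3)$. Thus a single configuration is bad with probability at most $p^{s+s^*}\exp(-\eps^2 sqp^2|A|/3)\le p^{s+s^*}\exp(-100p|A|/3)$.

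The number of ordered vertex-disjoint $\ell$-tuples of configurations is at most $\bigl[n\binom{|B|}{s}\binom{|A|}{s^*}\bigr]^\ell$ with $|B|=n-|A|$. Dividing by $\ell!$ for unordered tuples, and applying $\binom{m}{k}\le(em/k)^k$, the sharp elementary inequality $\ln(ew/100)\le w/100$, and $q\ln(e/q)\le 1$ for $q\in(0,1]$, the log-bound for fixed $(A,s^*)$ works out to
\[
\tfrac{|B|}{2}+\tfrac{|A|}{2}-\tfrac{50|A|}{3}+o(n)=\tfrac{n}{2}-\tfrac{50|A|}{3}+o(n).
\]
Summing over $A$ via $\binom{n}{|A|}\le\exp(nH(|A|/n))$ (with $H$ the natural-log binary entropy) and over the at most $n$ values of $s^*$, the total log-bound becomes $n\psi(\alpha)+o(n)$ for $\alpha:=|A|/n$ and $\psi(\alpha)=H(\alpha)+\tfrac12-\tfrac{50\alpha}{3}$. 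A direct computation gives $\psi(1/3)=H(1/3)+\tfrac12-\tfrac{50}{9}\approx-4.42<0$, and $\psi'(\alpha)=\ln\bigl((1-\alpha)/\alpha\bigr)-\tfrac{50}{3}<0$ on $[1/3,1]$, so $\psi<0$ throughout. Hence the expected number of bad $\ell$-tuples is $\exp(-\Omega(n))$, and Markov's inequality closes the argument.

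The main difficulty will be making the per-configuration Chernoff estimate strong enough uniformly in $s^*$, which requires the tight large-deviation form of Chernoff when the effective $\delta$ is large (i.e.\ when $s^*$ is far below $qp|A|$), and then balancing the three competing contributions (witness enumeration $|A|/2$, leaf enumeration $|B|/2$, and Chernoff exponent $-50|A|/3$) so that $\psi(\alpha)<0$ even at the extremal value $\alpha=1/3$. This balance is exactly what the constant $100$ in the hypothesis $s\ge 100 q^{-1}\eps^{-2}p^{-1}$ arranges via the sharp bound $\ln(ew/100)\le w/100$.
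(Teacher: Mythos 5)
Your overall strategy --- a first-moment/union bound over the set $A$, over witness sets, and over collections of $\lceil\tfrac12p^{-1}\rceil$ disjoint stars, with a per-leaf Chernoff estimate whose exponent is rescued by the identity $\eps^2qs\ge 100p^{-1}$ --- is essentially the paper's proof. Your treatment of the tail (via $h(\delta)=(1+\delta)\ln(1+\delta)-\delta$, checking that $\mu h(\delta)$ is decreasing in $\mu=ps^*$ for fixed threshold, so the worst case is $s^*=qp|A|$, $\delta=\eps$) is in fact more careful than the paper's, which silently applies a bound stated only for $\delta<3/2$; the independence claim is correct because the centre--leaf, centre--witness and witness--leaf pairs are pairwise distinct across a vertex-disjoint configuration; and the final entropy balance $\psi(\alpha)=H(\alpha)+\tfrac12-\tfrac{50\alpha}{3}<0$ on $[1/3,1]$ has enough slack to absorb the rounding in your estimates.

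There is, however, a genuine gap in how you fold the quantification over $q$ into the union bound. You set $q:=\max\{\eps,s^*/(p|A|)\}$ and count $s$-stars with $s=\lceil100q^{-1}\eps^{-2}p^{-1}\rceil$ for this \emph{derived} $q$. But the lemma must hold for every $q_0\in(\eps,1)$, and a collection of $(q_0,\eps)$-bad $s(q_0)$-stars whose witnesses have size $s^*\ll q_0p|A|$ has derived parameter $q_1=\max\{\eps,s^*/(p|A|)\}<q_0$ and hence only $s(q_0)\le s(q_1)$ leaves per star: such collections are simply not among the $s(q_1)$-star configurations you enumerate, so the sentence ``any $\ell$ vertex-disjoint $(q,\eps)$-bad $s$-stars with common witness size $s^*$ give such a tuple'' does not cover what the lemma demands. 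If instead you keep the true leaf count $s(q_0)$ but test against the lowered threshold $(1+\eps)q_1p^2|A|$, the per-star exponent becomes $\eps^2q_1p^2|A|s(q_0)/3=\tfrac{100q_1}{3q_0}p|A|$, which can be as small as $\tfrac{100\eps}{3}p|A|$, and then $H(\alpha)+\tfrac12-\tfrac{50\eps\alpha}{3}>0$ for small $\eps$, so the balance breaks. The repair is to keep $q$ coupled to the leaf count: parametrise also by $\sigma=s(q)$ (at most $100\eps^{-3}p^{-1}\le n$ values) and use the threshold $(1+\eps)\hat qp^2|A|$ with $\hat q=\max\{\eps,\,s^*/(p|A|),\,100\eps^{-2}/(p\sigma)\}$, which preserves $\hat q\sigma\ge100\eps^{-2}p^{-1}$ and dominates every admissible $(q_0,s_0)$; alternatively, enlarge each witness to size $\lfloor q_0p|A|\rfloor$ before counting. (The paper avoids this by arguing for each fixed $q$ separately.) A second, minor point: the $\ell$ witnesses in one collection need not have a common size, so you must sum over $\ell$-tuples of sizes --- an affordable extra factor $n^{\ell}=e^{o(n)}$ --- rather than a single $s^*$.
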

\begin{proof}
  First let~$A$ be fixed.
  Consider an $s$-star with centre~$x$ and a set $S\subset N_{\Gamma}(x,A)$
  with $|S|\le qp|A|$.  
  % $|S|\le q(1-3\eps)p|A|$.  
  By the Chernoff bound, Lemma~\ref{Chernoff}, the probability that $S$
  witnesses that this star is $(q,\eps)$-bad for~$A$ is less than $\exp
  \big(\frac{-\eps ^2}{3}\cdot qp^2 |A|s\big)$.  Observe that $|S|\le
  qp|A|\le pn$ and that we may assume $s\le \deg_\Gamma(x)\le 2pn$ by
  Lemma~\ref{Lem:Basics}\ref{Basics:Nbs}.  So by taking a union bound over
  choices of~$S$ for a single $s$-star, and then considering collections of
  $\frac12p^{-1}$ vertex disjoint $s$-stars, and taking another union bound
  over all such collections, we obtain that the probability that there are
  at least $\frac{1}{2}p^{-1}$ disjoint $(q,\eps)$-bad stars for~$A$ in $V(\Gamma)\setminus A$ is
  less than
  \begin{equation*}
    \Big(n\cdot 2^{2pn}\Big)^{\frac12 p^{-1}}
    \cdot\Big(2^{pn}\exp\big(\tfrac{-\eps^2}{3}qp^2|A|s\big)\Big)^{\tfrac{1}{2}p^{-1}}
    \leq
    \big(2^{4pn}\exp\big(\tfrac{-\eps^2}{9}qp^2ns\big)\big)^{\tfrac{1}{2}p^{-1}}\,. 
  \end{equation*} 
  By taking a union bound over choices of~$A$ we find that
  the probability that there is~$A$ such that $\frac{1}{2}p^{-1}$ stars $K_{1,s}$ outside $A$
  are $(q,\eps)$-bad for~$A$ is less than
  \begin{equation*}
    2^n
    \big(2^{4pn}\exp\big(\tfrac{-\eps^2}{9}qp^2ns\big)\big)^{\tfrac{1}{2}p^{-1}}\leq
    \exp \big(n+2n-\tfrac{\eps^2}{18}qpns\big)\,,
  \end{equation*}
  which tends to zero for $s\geq
  100\eps^{-2}q^{-1}p^{-1}$. (Observe that we do not have to take a union
  bound over~$s$, because for $s'>s$ any $s$-star which is a subgraph of a
  $(q,\eps)$-bad $s'$-star is also $(q,\eps)$-bad.)
\end{proof}

\section{Proof of Theorem~\ref{Thm:Main}}

Recall that Theorem~\ref{Thm:Main} states the following.

\begin{thmMain}
For any $\gamma>0$, there exists $C$ such that for any $p(n)$ the random graph $\Gamma =G(n,p)$ a.a.s.\ has the property that all triangle-free spanning subgraphs $H\subset\Gamma$ with $\delta(H)\geq (\frac{2}{5} + \gamma)pn$ can be made bipartite by removing at most $\min\big(C p^{-1}n,(\frac14+\gamma)pn^2\big)$ edges.
\end{thmMain}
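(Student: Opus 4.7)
First I split on the size of~$p$. If $p\le cn^{-1/2}$ for an appropriately small $c=c(\gamma)$, then Lemma~\ref{Lem:Basics}\ref{Basics:Pairs} gives $e(\Gamma)\le(1+\eps)pn^2/2$ a.a.s., so any bipartition of $V(H)$ at least as good as a random one has at most $e(H)/2\le(1+\eps)pn^2/4\le(\tfrac14+\gamma)pn^2$ edges within parts, matching the second term in the minimum. For the rest I assume $p\ge cn^{-1/2}$ and target the $Cp^{-1}n$ bound.

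Applying Lemma~\ref{Lem:TriangleFreeReducedGraph} with $d,\eps\ll\gamma$ and a large $t_0$ to~$H$ produces an $(\eps,d,p)$-regular partition $V_0\cup V_1\cup\dots\cup V_t$ of~$H$ whose reduced graph~$R$ is triangle-free with $\delta(R)\ge(\tfrac25+\tfrac{\gamma}{2})v(R)$. By Theorem~\ref{Thm:AES} the reduced graph is bipartite, $V(R)=R_1\cup R_2$, and the standard AES degree inequality forces $|R_1|,|R_2|\in[(\tfrac25+\tfrac{\gamma}{3})v(R),(\tfrac35-\tfrac{\gamma}{3})v(R)]$. Setting $A_0=\bigcup_{i\in R_1}V_i$ and $B_0=\bigcup_{i\in R_2}V_i$ (and assigning~$V_0$ arbitrarily), the ``Furthermore'' clause of Lemma~\ref{Lem:MinDegSparseReg} gives $\deg_H(v,\text{own side of }(A_0,B_0))\le(d+\eps)pn$ for every~$v$, hence the coarse bound $e_H(A_0)+e_H(B_0)=O((d+\eps)pn^2)$, which is too weak on its own but pins down the structure.

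Next I refine $(A_0,B_0)$ to the bipartition $(A,B)$ minimising $e_H(A)+e_H(B)$. Optimality yields $\deg_H(v,\text{own})\le\deg_H(v,\text{other})$ for every~$v$, and together with the coarse bound, the minimum degree of~$H$, and the typical $\Gamma$-statistics of Lemmas~\ref{Lem:Basics} and~\ref{Lem:Atypical} this forces $|A|,|B|\in[(\tfrac25+\tfrac{\gamma}{4})n,(\tfrac35-\tfrac{\gamma}{4})n]$. Fix $q\in(\eps,1)$ just above $\tfrac35-\gamma+2\eps$ (so that the inequality ``$|S|>qp\cdot\min(|A|,|B|)$'' already contradicts the min-degree condition via the optimality relation), and set $s=\lceil 100q^{-1}\eps^{-2}p^{-1}\rceil$. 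Assume for contradiction that $e_H(A)+e_H(B)\ge Cp^{-1}n$ with $C$ large. A greedy extraction taking the highest-degree vertex in $H[A]\cup H[B]$ yields $\Omega(p^{-1})$ vertex-disjoint $s$-stars, of which WLOG a positive fraction sit in $H[B]$ with centre $x\in B$ and leaves $y_1,\dots,y_s\in N_H(x)\cap B$. For each such star set $S_x=N_\Gamma(x,A)\setminus N_H(x,A)$. If $|S_x|>qp|A|$ then by the choice of~$q$ and optimality we obtain $\deg_H(x)<(\tfrac25+\gamma)pn$, contradicting the minimum degree. Otherwise $|S_x|\le qp|A|$, and triangle-freeness $N_H(y_i,A)\subseteq A\setminus N_H(x,A)=(A\setminus N_\Gamma(x,A))\cup S_x$, combined with the $\Gamma$-typicality guaranteed by Lemmas~\ref{Lem:Basics} and~\ref{Lem:Atypical} for all but $O(p^{-1})$ leaves per~$x$, and the minimum-degree of each~$y_i$, pushes $\deg_\Gamma(y_i,S_x)\ge(1+\eps)qp^2|A|$. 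So the star is $(q,\eps)$-bad for~$A$, and Lemma~\ref{Lem:Stars} caps the number of vertex-disjoint such stars at $\tfrac12p^{-1}$, contradicting the greedy count once~$C$ is chosen large enough in terms of $\gamma,\eps,d$.

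The main obstacle I expect is the verification that $\deg_\Gamma(y_i,S_x)\ge(1+\eps)qp^2|A|$: this threshold sits just above the typical $\Gamma$-value $p|S_x|$, so it has to be wrung out of the combination of triangle-freeness and the minimum-degree hypothesis, with tight tracking of all the $\pm\eps pn$ error terms produced by Lemmas~\ref{Lem:Basics} and~\ref{Lem:Atypical}. Calibrating $q$ as a function of~$\gamma$ so that the two branches of the dichotomy cover every admissible $|A|\in[(\tfrac25+\tfrac{\gamma}{4})n,(\tfrac35-\tfrac{\gamma}{4})n]$—including the delicate balanced case $|A|\approx n/2$—and arranging the greedy extraction to avoid the few $\Gamma$-atypical leaves per star is the technical core of the proof.
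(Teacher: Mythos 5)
Your overall architecture matches the paper's: handle small $p$ separately, use Lemma~\ref{Lem:TriangleFreeReducedGraph} plus Theorem~\ref{Thm:AES} to get the coarse bipartite structure and the bounds $|A|,|B|=(\tfrac12\pm\tfrac1{10})n$ up to $\gamma$, then use triangle-freeness, a star dichotomy and Lemma~\ref{Lem:Stars} to push the count of internal edges down to $\mathcal O(p^{-1}n)$. But the step you flag as ``the main obstacle'' is not a technical verification to be wrung out --- it is a genuine gap, and the single-threshold dichotomy you set up is arithmetically infeasible. The two branches place incompatible demands on $q$. For the branch ``$|S_x|>qp|A|$ contradicts the minimum degree'', the best you have for a general centre $x\in B$ is $\deg_H(x)\le 2\deg_H(x,A)\le 2(1+\eps-q)p|A|$, and with $|A|$ as large as $(\tfrac35-\tfrac\gamma4)n$ this forces $q>\tfrac23$; even in the balanced case $|A|=n/2$ it forces $q>\tfrac35$. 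For the other branch, triangle-freeness only gives $N_\Gamma(x,y_i,A)\subseteq S_x\cup S_{y_i}$, so each missing common neighbour may be charged to the leaf rather than the centre; without orienting each edge towards the endpoint responsible for the larger share (which your greedy highest-degree extraction does not do), a star can fail to be $(q,\eps)$-bad for \emph{any} $q>0$, and even with a majority orientation the best badness parameter obtainable is $q\approx(1-2\eps)\tfrac12<\tfrac12$. Since $\tfrac12<\tfrac35<\tfrac23$, no choice of $q$ closes the dichotomy, including the balanced case.

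The paper escapes this with two devices you are missing. First, edges inside $X$ are \emph{oriented}, with in-stars (not arbitrary stars) extracted greedily, so that every leaf of an in-star genuinely charges a constant fraction of the common $\Gamma$-neighbourhood to the centre's deleted set $S_x$. Second, and crucially, the orientation is \emph{biased} via the set $\tilde X$ of vertices with $\deg_H(x,X)\ge\gamma\deg_H(x)$: an edge meeting $\tilde X$ is directed to its $\tilde X$-endpoint already when $\tfrac23$ of the common neighbourhood is missing there, and to the other endpoint otherwise (guaranteeing a $\tfrac13$-fraction). This yields two badness parameters, $\tilde q\approx\tfrac23$ for centres in $\tilde X$ and $q\approx\tfrac13$ for the rest, and the arithmetic closes because centres outside $\tilde X$ satisfy the much stronger relation $\deg_H(x)\le(1+2\gamma)\deg_H(x,Y)$ rather than $2\deg_H(x,Y)$, so that $q\approx\tfrac13$ suffices there ($\tfrac23\cdot\tfrac35=\tfrac25$), while centres in $\tilde X$ get the $\tfrac23$-threshold needed for the factor-two bound; Claim~\ref{Cl:tildeX} separately guarantees $\tilde X$ carries at most half of $e_H(X)$. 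Without introducing something equivalent to the orientation and the $\tilde X$ split, your proof cannot be completed as written.
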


The main strategy of the proof is as follows.  
% As mentioned in the introduction the case when $p=O(n^{-1/2})$ is trivial.  We then restrict to
% the case of $p\geq cp^{-1}n$.  
% We start by considering an arbitrary triangle-free $H \subset \Gamma$ with $\delta(H)\geq \big(\frac{2}{5} + \gamma\big)pn$.
We first apply Lemma~\ref{Lem:TriangleFreeReducedGraph} (which is a
consequence of the Sparse Regularity Lemma) to~$H$ to obtain a dense
triangle-free reduced graph~$R$ of~$H$ with minimum degree above
$\frac25v(R)$, which by the Andr\'asfai--Erd\H{o}s--S\'os Theorem,
Theorem~\ref{Thm:AES}, is bipartite. We conclude that $H$ can be made
bipartite by removing $\mathcal{O}(p n^2)$ edges. Hence in a
maximum cut $X\cup Y$ of~$H$ we have $e_H(X),e_H(Y)=\mathcal{O}(pn^2)$. Our goal
will then be to improve this bound on $e_H(X)$ and $e_H(Y)$ by
distinguishing between `typical' and `atypical' edges in these sets and
applying the results established in the previous section to count these,
using that $X\cup Y$ is a maximum cut and that~$H$ is triangle-free.

\begin{proof}[Proof of Theorem~\ref{Thm:Main}]
Let
\begin{equation}\label{eq:Main:const}
  \eps=\frac{\gamma^2}{10^4},\quad d=\frac{\gamma^2}{10^3},\quad\eta=d+3\eps,\quad 
  \beta=\frac{2}{5}+\gamma, \quad t_0=\frac{1}{\eps}
\end{equation}
and let $c$ and $t_1$  be the values attained by applying Lemma~\ref{Lem:TriangleFreeReducedGraph} with inputs~$\eps$,
$d$, $\beta$ and $t_0$. Let $M=t_1^2$, and let 
\begin{equation}\label{eq:Main:constC}
 C=\max\big(10^{10}\eps^{-2},c^2\big)\,.
\end{equation}

We first consider the easy case that $p$ is small. If $p\le n^{-7/4}$, then the expected number of paths with two edges in $G(n,p)$ is at most $p^2n^3\le n^{-1/2}$. In particular a.a.s\ there are no such paths, so a.a.s.\ $G(n,p)$ is bipartite and the statement of Theorem~\ref{Thm:Main} holds trivially. We may therefore assume $p\ge n^{-7/4}$, so by Lemma~\ref{Chernoff} a.a.s.\ $G(n,p)$ has at most $\big(\tfrac12+\gamma\big)pn^2$ edges. Now if $G$ is any graph with at most $\big(\tfrac12+2\gamma\big)pn^2$ edges, then we can make $G$ bipartite by removing all the edges of $G$ not in a maximum cut. Since a maximum cut of $G$ contains at least half its edges, we remove at most $\big(\tfrac14+\gamma\big)pn^2$ edges. Again, if $\min\big(Cp^{-1}n,(\tfrac14+\gamma)pn^2\big)=(\tfrac14+\gamma)pn^2$, which occurs when $p\le cn^{-1/2}$, the statement of Theorem~\ref{Thm:Main} follows.

It remains to consider the hard case that $p\ge cn^{-1/2}$.
We now assume $\Gamma=G(n,p)$ satisfies the properties stated in
Lemma~\ref{Lem:Basics} with input~$\eps$ and~$M$, Lemma~\ref{Lem:Atypical}
with input~$\eps$ and~$M$, Lemma~\ref{Lem:Stars} with input~$\eps$ and Lemma~\ref{Lem:TriangleFreeReducedGraph} for the parameters given above.

Consider any triangle-free $H \subset \Gamma$ with $\delta (H) \geq
(\frac{2}{5} + \gamma)pn$ and let $X \cup Y$ be a maximum cut of the vertex
set of $H$. Assume without loss of generality that $e_H(X)\ge e_H(Y)$. Our
goal is to show $e_H(X)\le\frac12 Cp^{-1}n$. We start with the following
observation.

\begin{claim}\label{Cl:eHX}
  $e_H(X)\le \eta pn^2$.
\end{claim}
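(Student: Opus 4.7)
My plan is to apply the sparse regularity lemma to $H$, use the Andr\'asfai--Erd\H{o}s--S\'os theorem to deduce that the corresponding reduced graph must be bipartite, and then compare the lifted bipartition against the maximum cut $(X,Y)$.

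First, I apply Lemma~\ref{Lem:TriangleFreeReducedGraph} with the parameters $\eps,d,\beta,t_0$ from~\eqref{eq:Main:const} to obtain an $(\eps,d,p)$-regular partition $V_0\cup V_1\cup\cdots\cup V_t$ of $V(H)$ whose reduced graph $R$ is triangle-free and satisfies $\delta(R)\ge(\beta-d-\eps)v(R)$; by~\eqref{eq:Main:const} this is strictly greater than $\tfrac{2}{5}v(R)$, so Theorem~\ref{Thm:AES} tells me that $R$ is bipartite. I fix a bipartition $V(R)=A\cup B$ and lift it to $V(H)$ by setting $X':=V_0\cup\bigcup_{i\in A}V_i$ and $Y':=\bigcup_{i\in B}V_i$.

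Next, I bound $e_H(X')+e_H(Y')$ by splitting the same-side edges into three types. Edges lying within a single part $V_i$ contribute at most $\sum_{i=1}^{t}|V_i|^2p\le(n/t)\cdot np\le\eps pn^2$ by Lemma~\ref{Lem:Basics}\ref{Basics:Sets} and $t\ge t_0=1/\eps$. Edges incident to $V_0$ contribute at most $2\eps pn^2$, since $|V_0|\le\eps n$ and each vertex of $\Gamma$ has degree at most $2pn$ by Lemma~\ref{Lem:Basics}\ref{Basics:Nbs}. Finally, every same-side edge between distinct parts $V_i,V_j$ (with $i,j\in A$ or $i,j\in B$) has $ij\notin R$ because $R$ is bipartite, so the ``Furthermore'' clause of Lemma~\ref{Lem:MinDegSparseReg} (inherited through Lemma~\ref{Lem:TriangleFreeReducedGraph}) caps each vertex's $H$-neighbourhood in $\bigcup_{j:ij\notin R}V_j$ by $(d+\eps)pn$, contributing at most $\tfrac{1}{2}n(d+\eps)pn=\tfrac{1}{2}(d+\eps)pn^2$ such edges in total. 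Summing, $e_H(X')+e_H(Y')\le\tfrac{1}{2}(d+\eps)pn^2+3\eps pn^2\le(d+3\eps)pn^2=\eta pn^2$, where the last inequality uses $\eps\le d$ from~\eqref{eq:Main:const}.

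Since $(X,Y)$ is a maximum cut of $H$, we have $e_H(X)+e_H(Y)\le e_H(X')+e_H(Y')$, and hence $e_H(X)\le e_H(X)+e_H(Y)\le\eta pn^2$, as desired. The only subtle point is ensuring that the ``Furthermore'' conclusion of Lemma~\ref{Lem:MinDegSparseReg} is inherited through Lemma~\ref{Lem:TriangleFreeReducedGraph} and applied correctly; this upgrade from the crude $O(pn^2)$ bound coming from bipartiteness of $R$ alone to the sharper $\eta pn^2$ bound is exactly what the careful choice of constants in~\eqref{eq:Main:const} is tuned for, and is what the remaining steps of the proof of Theorem~\ref{Thm:Main} will exploit.
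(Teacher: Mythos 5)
Your proof is correct and follows essentially the same route as the paper's: apply Lemma~\ref{Lem:TriangleFreeReducedGraph} and the Andr\'asfai--Erd\H{o}s--S\'os theorem to get a bipartite reduced graph, bound the edges of $H$ not lying across $R$-pairs by $\eta pn^2$, and compare the lifted bipartition against the maximum cut. The only divergence is bookkeeping: you control the non-$R$-pair edges via the ``Furthermore'' degree cap of Lemma~\ref{Lem:MinDegSparseReg} (which, as you rightly flag, is not formally carried in the statement of Lemma~\ref{Lem:TriangleFreeReducedGraph} but does hold for the partition it produces), whereas the paper counts irregular and low-density pairs directly via Lemma~\ref{Lem:Basics}; both give the stated bound.
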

\begin{claimproof}[Proof of Claim~\ref{Cl:eHX}]
  By the property asserted by Lemma~\ref{Lem:TriangleFreeReducedGraph} we
  obtain an $\big(\eps,d,p\big)$-regular partition
  $V(\Gamma)=V_0\cup V_1 \cup \cdots \cup V_t$ of~$H$ with $t_0\le t\le
  t_1$ whose corresponding reduced graph $R$ is triangle-free and has
  minimum degree at least $(\frac{2}{5}+\gamma
  -d-\eps)v(R)>\frac{2}{5}v(R)$.  Therefore, by the
  Andr\'asfai--Erd\H{o}s--S\'os Theorem, Theorem~\ref{Thm:AES}, $R$ is
  bipartite.

  By Lemma~\ref{Lem:Basics}\ref{Basics:Nbs} at most
  $\eps n (1+\eps) pn$ edges have at least one end
  in~$V_0$. Moreover, since at most an $\eps$-fraction of all
  pairs are
  irregular, by Lemma~\ref{Lem:Basics}\ref{Basics:Pairs} at
  most $\eps(1+\eps)pn^2$ edges are contained in
  irregular pairs. Finally, at most $dpn^2$ edges are in pairs with density
  less than $d$.
  We conclude that at most
  $(d+2(1+\eps)\eps)pn^2 \leq\eta pn^2$
  edges of $H$ do not lie in pairs corresponding to edges of $R$, which
  proves the claim.
\end{claimproof}

We next bound the sizes of~$X$ and~$Y$.

\begin{claim}\label{Cl:Sizes}
$\big(\frac{2}{5} + \frac12\gamma\big)n \leq |X|,|Y|\leq \big(\frac{3}{5} - \frac12\gamma\big)n$.
\end{claim}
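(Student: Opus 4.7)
The plan is to exploit Claim~\ref{Cl:eHX} together with the assumption $e_H(Y)\le e_H(X)$, which yields $e_H(X),e_H(Y)\le\eta pn^2$, so each side of the cut has few internal edges. A Markov argument applied to $\sum_{v\in X}\deg_H(v,X)=2e_H(X)\le 2\eta pn^2$ then shows that all but at most a $\gamma/5$-fraction of vertices $v\in X$ satisfy $\deg_H(v,X)\le(\gamma/100)pn$, and symmetrically for $Y$. For any such ``good'' $v\in X$, the minimum degree condition gives $\deg_H(v,Y)\ge\delta(H)-(\gamma/100)pn\ge(\tfrac25+\tfrac{9}{10}\gamma)pn$. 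I would then upper-bound $\deg_H(v,Y)\le\deg_\Gamma(v,Y)\le(1+\eps)p|Y|$ using Lemma~\ref{Lem:Basics}\ref{Atypical:Vertices}, which applies provided $|Y|\ge n/M$ and excludes at most $10M\eps^{-2}p^{-1}$ further vertices. Picking any $v\in X$ good in both senses and rearranging yields $|Y|\ge(\tfrac25+\tfrac\gamma2)n$, since $\eps\ll\gamma$ by \eqref{eq:Main:const}. The symmetric argument gives $|X|\ge(\tfrac25+\tfrac\gamma2)n$, and the upper bounds then follow from $|X|+|Y|=n$.

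The main obstacle is ensuring that $|X|$ and $|Y|$ are not too small, so that Lemma~\ref{Lem:Basics}\ref{Atypical:Vertices} applies and so that a good vertex actually exists in each part. I plan to dispatch this by a handshake argument: summing degrees gives
\[
(\tfrac25+\gamma)pn^2\le\sum_v\deg_H(v)=2e_H(X)+2e_H(Y)+2e_H(X,Y)\le 4\eta pn^2+2e_H(X,Y),
\]
so $e_H(X,Y)\ge(\tfrac15+\tfrac\gamma2-2\eta)pn^2$. On the other hand, if $|X|<\gamma n/4$, then Lemma~\ref{Lem:Basics}\ref{Basics:Pairs} gives either $e_\Gamma(X,Y)\le(1+\eps)pn^2/M$ (when $|X|<n/M$) or $e_\Gamma(X,Y)\le(1+\eps)(\gamma/4)pn^2$ (otherwise); both bounds contradict the preceding lower bound once $M=t_1^2$ is large and $\eta$ is small. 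This forces $|X|,|Y|\ge\gamma n/4$, which comfortably exceeds the total count of bad vertices, since the error term $10M\eps^{-2}p^{-1}=o(n)$ given that we are in the regime $p\ge cn^{-1/2}$.
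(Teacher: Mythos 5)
Your proposal is correct in outline but takes a genuinely different route from the paper. The paper proves the claim by a single global double count of the cut: assuming $|Y|<(\tfrac25+\tfrac12\gamma)n$, it compares the lower bound $e_H(X,Y)\ge(\tfrac25+\gamma)pn\max\{|X|,|Y|\}-2\eta pn^2$ (from the minimum degree condition and Claim~\ref{Cl:eHX}) with the upper bound $e_\Gamma(X,Y)\le(1+\eps)p|X||Y|$ from Lemma~\ref{Lem:Basics}\ref{Basics:Pairs}, and reads off an immediate contradiction. You instead bootstrap a crude lower bound on $|X|$ and $|Y|$ by a handshake count, then use Markov to locate a single vertex of $X$ with negligible internal degree, whose $\Gamma$-degree into $Y$ forces $|Y|\ge(\tfrac25+\tfrac12\gamma)n$. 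Both arguments use the same ingredients; the paper's is shorter because it never needs to exhibit a good vertex, while yours only needs vertex-degree statistics into large sets rather than edge counts between pairs of large sets.

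One numerical step does not close as written. With the paper's constants, $\eta=d+3\eps=\tfrac{13\gamma^2}{10^4}$, so Markov with threshold $(\gamma/100)pn$ bounds the number of $v\in X$ with $\deg_H(v,X)>(\gamma/100)pn$ only by $200\eta n/\gamma=0.26\gamma n$ (not a ``$\gamma/5$-fraction''), and this exceeds the bound $|X|\ge\gamma n/4=0.25\gamma n$ that you extract from the handshake argument; a good vertex is therefore not guaranteed to exist, contrary to your ``comfortably exceeds''. The repair is immediate in either of two ways: your own inequality $e_H(X,Y)\ge(\tfrac15+\tfrac\gamma2-2\eta)pn^2$ against $e_\Gamma(X,Y)\le(1+\eps)p|X|n$ actually forces $|X|,|Y|\ge n/6$, far more than $\gamma n/4$; alternatively, raising the Markov threshold to $(\gamma/10)pn$ cuts the bad-vertex count to $0.026\gamma n$ while still yielding $\deg_H(v,Y)\ge(\tfrac25+\tfrac9{10}\gamma)pn$ for the chosen vertex. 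With either fix, together with the $o(n)$ exceptional set from Lemma~\ref{Lem:Basics}\ref{Atypical:Vertices}, your argument is complete.
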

\begin{claimproof}[Proof of Claim~\ref{Cl:Sizes}]
  % We show that if this were not the case there would not be enough edges between $X$ and $Y$ to satisfy our minimum degree condition.
  Suppose for a contradiction that $X$ satisfies
  $|X|>(\frac{3}{5}-\frac12\gamma)n$ and hence $|Y|<(\frac25+\frac12\gamma)$. Then by
  Lemma~\ref{Lem:Basics}\ref{Basics:Pairs} we see that
  $e_H(X,Y)\leq e_\Gamma(X,Y)\le (1+\eps)(\frac{3}{5}-\frac12\gamma)(\frac{2}{5}+\frac12\gamma)pn^2$.

  On the other hand, by our minimum degree condition $2e_H(X)+e_H(X,Y)\geq
  (\frac{2}{5}+\gamma)pn|X|$, and similarly $2e_H(Y)+e_H(X,Y)\geq
  (\frac{2}{5}+\gamma)pn|Y|$.  Since $e_H(X),e_H(Y)\leq\eta p n^2$
  this gives $e_H(X,Y)\geq
  (\frac{2}{5}+\gamma)pn\cdot\max\{|X|,|Y|\}-2\eta p n^2$.
  Since $\max\{|X|,|Y|\}\geq(\frac{3}{5}-\frac12\gamma)n$ we obtain
  $e_H(X,Y)\geq \big((\frac{3}{5}-\frac12\gamma)(\frac{2}{5}+\gamma) -2\eta \big)pn^2$, a contradiction.

  So $|X|\le(\frac{3}{5}-\frac12\gamma)n$, and analogously
  $|Y|\le(\frac{3}{5}-\frac12\gamma)n$, proving the claim.
\end{claimproof}

We next define
\begin{equation*}
  \tilde{X}=\big\{x\in X: \deg_H(x,X)\geq \gamma \cdot \deg_H(x)\big\}\,,
\end{equation*}
a set of vertices with high degree in~$X$, which require special treatment later
on. The next claim shows that~$\tilde X$ is small and contains at most half
of the edges in~$X$.

\begin{claim}\label{Cl:tildeX}
  $|\tilde{X}|\le \frac1{100}\gamma n$, and if $e_H(X)>\frac12Cp^{-1}n$  then
  $e_H(\tilde X)\le\frac12 e_H(X)$.
\end{claim}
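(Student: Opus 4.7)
The plan is a direct double-counting argument exploiting the minimum degree condition, combined with Lemma~\ref{Lem:Basics}\ref{Basics:Sets} and the bound $e_H(X)\le\eta pn^2$ already supplied by Claim~\ref{Cl:eHX}.

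I would begin by observing that every $x\in\tilde X$ satisfies
\[\deg_H(x,X)\ge\gamma\deg_H(x)\ge\gamma\big(\tfrac{2}{5}+\gamma\big)pn\ge\tfrac{2\gamma}{5}pn,\]
so summing and using $\sum_{x\in\tilde X}\deg_H(x,X)\le 2e_H(X)$ gives the basic estimate
\[|\tilde X|\le\frac{5\,e_H(X)}{\gamma pn}.\]
Feeding $e_H(X)\le\eta pn^2$ into this already yields $|\tilde X|\le 5\eta n/\gamma$; with the choices $d=\gamma^2/10^3$ and $\eps=\gamma^2/10^4$ made at the start of the proof, $\eta=d+3\eps\le 13\gamma^2/10^4$, and therefore $|\tilde X|\le 65\gamma n/10^4<\gamma n/100$, which is the first assertion.

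For the second assertion I would appeal to Lemma~\ref{Lem:Basics}\ref{Basics:Sets} to obtain $e_H(\tilde X)\le e_\Gamma(\tilde X)\le\max\{|\tilde X|^2 p,\,9n\}$, and split into two cases. If $|\tilde X|^2p\le 9n$, then $e_H(\tilde X)\le 9n$; but the hypothesis $e_H(X)>\tfrac12 Cp^{-1}n$ together with $C\ge 10^{10}\eps^{-2}$ and $p\le 1$ forces $e_H(X)\ge 18n\ge 2\,e_H(\tilde X)$, as required. Otherwise, using the displayed bound on $|\tilde X|$,
\[e_H(\tilde X)\le|\tilde X|^2p\le\frac{25\,e_H(X)^2}{\gamma^2pn^2}\le\frac{25\eta}{\gamma^2}\,e_H(X)\le\tfrac12 e_H(X),\]
where the penultimate inequality uses $e_H(X)\le\eta pn^2$ once more and the last uses $25\eta/\gamma^2\le 325/10^4<\tfrac12$.

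The whole argument is essentially a routine calculation, and I do not anticipate a genuine obstacle. The two slightly delicate points are (i) that the hypothesis $e_H(X)>\tfrac12 Cp^{-1}n$ is used precisely to rule out the regime in which $\tilde X$ is too small for the quadratic term in Lemma~\ref{Lem:Basics}\ref{Basics:Sets} to dominate $9n$, which is why $C$ was defined to be at least $10^{10}\eps^{-2}$; and (ii) that $\eta$ must be small relative to $\gamma^2$, which is exactly why $d$ and $\eps$ were chosen of order $\gamma^2$ in \eqref{eq:Main:const}.
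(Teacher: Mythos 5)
Your proposal is correct and follows essentially the same route as the paper: bound $|\tilde X|$ by double-counting degrees into $X$ against the minimum degree condition and Claim~\ref{Cl:eHX}, then apply Lemma~\ref{Lem:Basics}\ref{Basics:Sets} and split on whether the $9n$ term or the $|\tilde X|^2p$ term dominates. The only (immaterial) difference is bookkeeping in the last step, where the paper bounds $|\tilde X|^2p\le\frac{1}{100}\gamma pn|\tilde X|$ and reuses its displayed inequality, while you substitute your bound on $|\tilde X|$ twice.
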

\begin{claimproof}[Proof of Claim~\ref{Cl:tildeX}]
  By Claim~\ref{Cl:eHX} and the definition of~$\tilde X$ we have 
  \begin{equation}\label{eq:tildeX}
    \eta pn^2 \geq e_H(X) \geq \frac12|\tilde X|\gamma \delta(H)\geq \frac{\gamma}{2}\Big(\frac{2}{5}
    + \gamma\Big)pn |\tilde{X}|\,,
  \end{equation}
  hence 
  $|\tilde{X}|\leq \frac{2\eta n}{\gamma(2/5 + \gamma)}\leq 5\gamma^{-1}\eta
  n\le \gamma n/100$ by~\eqref{eq:Main:const}.

  % By the degree condition on $\tilde{X}$ we find
  % $2e_H(\tilde{X})+e_H(\tilde{X},X \setminus \tilde{X}) \geq
  % \gamma(\frac{2}{5} + \gamma)pn|\tilde{X}|$ and therefore it is enough
  % to show that $e_{\Gamma}(\tilde{X}) \leq
  % \max\{\frac{\gamma}{3}(\frac{2}{5} + \gamma)pn |\tilde{X}|,9n\}$.
  For the second part of the claim assume that $e_H(X)>\frac12Cp^{-1}n$.
  By Lemma~\ref{Lem:Basics}\ref{Basics:Sets} we have
  $e_H(\tilde X)\le e_{\Gamma}(\tilde{X})\leq \max\{|\tilde{X}|^2p,9n\}$. 
  If this maximum is attained by $9n$, then we are done because
  $9n\le\frac14Cp^{-1} n<\frac12e_H(X)$. Otherwise $e_H(\tilde X)\le
  |\tilde{X}|^2p$, and since
  $|\tilde{X}|\le\frac1{100}\gamma n$, we have 
  \begin{equation*}
    |\tilde{X}|^2p\le
    \frac1{100}\gamma pn|\tilde{X}| \le \frac{\gamma}{4}\Big(\frac{2}{5} +
    \gamma\Big) pn | \tilde{X}|
    \leByRef{eq:tildeX}\frac12 e_H(X)\,,
  \end{equation*}
  and we are also done.
\end{claimproof}

We continue by removing `atypical' edges from~$H$.  Let $H'$ be the
graph obtained from $H$ by removing edges from $E_H(X)$ which do not
satisfy the conditions of Lemma~\ref{Lem:Atypical} with respect to the
partition $X\cup Y$. We also remove the edges in $E_H(\tilde X)$. By
Lemma~\ref{Lem:Atypical} and Claim~\ref{Cl:tildeX} we have
$e_H(X)\le\frac12Cp^{-1}n$ or
\begin{equation}\label{eq:eHeH'}
  e_H(X)-e_{H'}(X)\le 10^3\eps^{-2}p^{-1}n + \frac12 e_H(X)
  \leByRef{eq:Main:constC}\frac1{10} Cp^{-1}n + \frac12 e_H(X)\,.
\end{equation}
Our goal in the remainder is to bound the number of $H'$-edges in $X$.
%
% It is now enough to show that in $H'$ there are at most
% $150\eps^{-2}p^{-1}n$ edges in $X$. We now know all $H'$-edges in $X$
% have at least $(1-2\eps)p^2 |Y|$ common $\Gamma$-neighbours in $Y$.
%
% We now show that there are at most
% \begin{equation}
% e(X)-e(\tilde{X}) \leq 250\eps^{-2}p^{-1}n\,.
% \end{equation}

Let $xz$ be any $H'$-edge in $X$. We have 
\begin{equation}\label{eq:Main:common}
  \deg_\Gamma(x,z,Y)\ge(1-\eps)p^2|Y|
\end{equation}
by construction of~$H'$, so this common neighbourhood constitutes many
$\Gamma$-triangles $xzy$, for each of which either $xy$ or $zy$ is not
present in~$H'$. We now would like to direct the edges in~$X$ according
which of these two cases is more common -- however, it turns out that we
need to favour vertices not in $\tilde X$ in this process; so we direct
with a bias.

More precisely, for any $H'$-edge in $X$, if one of its vertices is in~$\tilde X$ call
it~$x$, otherwise let~$x$ be any vertex of the edge. Let~$x'$ be the other
vertex of the edge.
We direct $xx'$ towards $x$ if 
\begin{equation*}
  |N_\Gamma(x,x',Y) \setminus N_{H'}(x,Y)|\ge\frac23 \deg_\Gamma(x,x',Y)\,,
\end{equation*}
that is if many edges from~$x$ to $N_\Gamma(x,x',Y)$ were deleted. We direct
$xx'$ towards~$x'$ otherwise, in which case we have
\begin{equation*}
  |N_\Gamma(x,x',Y) \setminus N_{H'}(x',Y)|>\frac13 \deg_\Gamma(x,x',Y)\,,
\end{equation*}
% \begin{equation*}
% |N_\Gamma(x,Y) \cap N_\Gamma(z,Y)| - |N_H(z,Y)|  \geq \frac{1}{2}(1-\eps)p^2|Y|\,.
% \end{equation*}
An \emph{$s$-in-star} in this directed graph is an $s$-star such that all
edges are directed towards the centre. Recall that an $s$-star with
centre~$x$ is
$(q,\eps)$-bad for~$Y$ if there is a witness $S\subset N_\Gamma(x,Y)$ with
$|S|\le qp|A|$ such that each leaf~$z$ of the $s$-star satisfies
$\deg_\Gamma(z,S)\ge (1+\eps)qp^2|Y|$.
The next claim shows that in-stars in $H'[X]$ are bad. We define
\[s=10^3\eps^{-2}p^{-1}\,,\quad \tilde q=(1-2\eps)\frac23\,,\quad q=(1-2\eps)\frac13\,.\]

\begin{claim}\label{Cl:bad}
  % Let $s=10^3\eps^{-2}p^{-1}$. For $\tilde q=(1-2\eps)\frac23$ 
  Each $s$-in-star in
  $H'[X]$ with centre $x\in\tilde X$ is $(\tilde q,\eps)$-bad for~$Y$, and
  % for $q=(1-2\eps)\frac13$ 
  each $s$-in-star in
  $H'[X]$ with centre $x\not\in\tilde X$ is $(q,\eps)$-bad for~$Y$.
\end{claim}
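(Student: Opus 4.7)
The plan is to check that $S:=N_\Gamma(x,Y)\setminus N_{H'}(x,Y)$ witnesses the desired badness in each of the two cases. Since we removed no $XY$-edges when forming $H'$ from $H$, this set equals $N_\Gamma(x,Y)\setminus N_H(x,Y)$, and the two conditions to verify are that $|S|$ is sufficiently small and that $\deg_\Gamma(z,S)$ is sufficiently large for every leaf $z$ of the star.

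For the leaf condition I first observe that $\deg_\Gamma(z,S)\ge a(xz):=|N_\Gamma(x,z,Y)\setminus N_{H'}(x,Y)|$: because $H'$ is triangle-free and $xz\in H'$, every $y\in N_\Gamma(x,z,Y)$ has $xy\notin H'$ or $zy\notin H'$, and such a $y$ lies in $S$ precisely when $xy\notin H'$. The directing rule gives $a(xz)\ge\tfrac{2}{3}\deg_\Gamma(x,z,Y)$ whenever $x$ is the vertex labelled $x$ in that rule, and $a(xz)>\tfrac{1}{3}\deg_\Gamma(x,z,Y)$ otherwise. The crucial observation is that when forming $H'$ we explicitly removed $E_H(\tilde X)$, so $H'[\tilde X]$ is edgeless; hence whenever $x\in\tilde X$, every in-edge $xz$ of our star has $z\notin\tilde X$, forcing the biased labelling to pick $x$ as the special vertex and upgrading the bound to $\tfrac{2}{3}$. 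Combining this with the common-neighbourhood estimate $\deg_\Gamma(x,z,Y)\ge(1-\eps)p^2|Y|$ from Lemma~\ref{Lem:Atypical} (applicable because the endpoints of every surviving $H'$-edge in $X$ satisfy the typicality conditions by construction) and the elementary $(1+\eps)(1-2\eps)\le1-\eps$ yields $\deg_\Gamma(z,S)\ge(1+\eps)\tilde q p^2|Y|$ in the first case and $\deg_\Gamma(z,S)\ge(1+\eps)qp^2|Y|$ in the second.

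For the size condition I would write $|S|\le\deg_\Gamma(x,Y)-\deg_H(x,Y)\le(1+\eps)p|Y|-\deg_H(x,Y)$, the first upper bound again using typicality of $x$. For $x\in\tilde X$ the maximum cut property of $X\cup Y$ gives $\deg_H(x,Y)\ge\tfrac{1}{2}\deg_H(x)\ge\bigl(\tfrac{1}{5}+\tfrac{\gamma}{2}\bigr)pn$, while for $x\notin\tilde X$ the definition of $\tilde X$ gives the stronger bound $\deg_H(x,Y)>(1-\gamma)\deg_H(x)\ge(1-\gamma)\bigl(\tfrac{2}{5}+\gamma\bigr)pn$. Substituting these into the target inequalities $|S|\le\tilde q p|Y|$ and $|S|\le qp|Y|$ respectively, using the upper bound $|Y|\le\bigl(\tfrac{3}{5}-\tfrac{\gamma}{2}\bigr)n$ from Claim~\ref{Cl:Sizes}, and using $\eps=\gamma^2/10^4\ll\gamma$, reduces each desired inequality to straightforward algebra.

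The main obstacle is the bookkeeping around the $\tfrac{2}{3}$ versus $\tfrac{1}{3}$ thresholds: the stronger $\tilde q$-badness for $\tilde X$-centred in-stars rests entirely on the fact that $H'[\tilde X]$ is edgeless, so that the biased labelling rule unambiguously picks $x\in\tilde X$ as the special vertex. Without this structural observation an edge $xz$ with both endpoints in $\tilde X$ could have $z$ as the special vertex, giving only $a(xz)>\tfrac{1}{3}\deg_\Gamma(x,z,Y)$ and collapsing the distinction between the two cases of the claim, which would ultimately cost the sharp constant delivered by Lemma~\ref{Lem:Stars}.
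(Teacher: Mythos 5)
Your proposal is correct and is essentially the paper's argument in contrapositive form: the paper assumes an in-star is not bad and derives a contradiction (either some leaf has too few $\Gamma$-neighbours in $S=N_\Gamma(x,Y)\setminus N_{H'}(x,Y)$, violating the orientation rule, or $|S|$ is too large, violating $\delta(H)$ via the max-cut and Claim~\ref{Cl:Sizes}), whereas you verify directly that this same $S$ is a witness using exactly those two ingredients, including the key observation that $E_{H'}(\tilde X)=\emptyset$ forces the biased labelling to give the $\tfrac23$ bound when $x\in\tilde X$.
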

\begin{claimproof}[Proof of Claim~\ref{Cl:bad}]
  First assume~$F$ is an $s$-in-star with centre $x\in\tilde X$ which is not
  $(\tilde q,\eps)$-bad. We first show that this implies
  \begin{equation}\label{eq:Main:in-star}
    |N_\Gamma(x,Y)\setminus N_{H'}(x,Y)|>\tilde qp|Y| \,.
  \end{equation}
  Indeed, assume otherwise. Then, since~$F$ is not $(\tilde q,\eps)$-bad for~$Y$
  we have for $S=N_\Gamma(x,Y)\setminus N_{H'}(x,Y)$ that there is a
  leaf~$z$ of~$F$ such that
  \[|N_\Gamma(x,z,Y)\setminus N_{H'}(x,Y)|=\deg_\Gamma(z,S)<(1+\eps)\tilde qp^2|Y|\le\frac23(1-\eps)p^2|Y|\,.\]
  This however contradicts the fact that~$F$ is an in-star and thus
  \[|N_\Gamma(x,z,Y)\setminus N_{H'}(x,Y)|\ge\frac23 \deg_\Gamma(x,z,Y)
  \geByRef{eq:Main:common} \frac 23(1-\eps)p^2|Y|\,.
  \]
  Accordingly~\eqref{eq:Main:in-star} holds.

  Since $\deg_H(x,Y)=\deg_{H'}(x,Y)$ we conclude that
  \[\deg_H(x,Y)\leq \deg_\Gamma(x,Y)-\tilde qp|Y|\le(1+\eps)p|Y|-(1-2\eps)\frac23p|Y|\le\Big(\frac13+3\eps\Big)p|Y|\,.\]
  Because $X\cup Y$ is a maximum cut this implies by Claim~\ref{Cl:Sizes} that
  \[\deg_H(x)\le2\Big(\frac13+3\eps\Big)p\Big(\frac35-\frac12\gamma\Big)n<\Big(\frac25+\gamma\Big)pn\,,\]
  contradicting the minimum degree of~$H$. So a $(\tilde q,\eps)$-bad $s$-in-star
  $F$ with centre $x\in\tilde X$ cannot exist.

  For the second part of the claim assume that~$F$ is an $s$-in-star with
  centre $x\not\in\tilde X$ which is not $(q,\eps)$-bad. By similar logic to the proof of~\eqref{eq:Main:in-star}, this implies that
  \[|N_\Gamma(x,Y)\setminus N_{H'}(x,Y)|>qp|Y|\] by using that for any leaf
  $z$ of~$F$ we have $|N_\Gamma(x,z,Y) \setminus N_{H'}(x,Y)|>\frac13 \deg_\Gamma(x,z,Y)$.
  Also analogously, this implies that
  $\deg_H(x,Y)\le(\frac23+3\eps)p|Y|$. Recall that $x\not\in\tilde X$ means
  that $\deg_H(x,X)<\gamma\deg_H(x)$ and hence $\deg_H(x)\le\frac1{1-\gamma}\deg_H(x,Y)\le(1+2\gamma)\deg_H(x,Y)$.
  Thus, by Claim~\ref{Cl:Sizes},
  \begin{equation*}
    \deg_H(x)\leq(1+2\gamma)\Big(\frac{2}{3}+3\eps\Big) p\Big(\frac35-\frac12\gamma\Big)n
    \le\Big(\frac23+\frac53\gamma\Big) p\Big(\frac35-\frac12\gamma\Big)n
    < \Big(\frac{2}{5} + \gamma\Big)pn\,,
  \end{equation*}
  again contradicting the minimum degree of~$H$. Hence also no star of this
  type exists.
\end{claimproof}

By Lemma~\ref{Lem:Stars}, however, the number of $s$-stars in $\Gamma$
which are either $(\tilde q,\eps)$-bad or $(q,\eps)$-bad is less than
$p^{-1}$.  So Claim~\ref{Cl:bad} implies that the number of $s$-in-stars in
$H'[X]$ is less than~$p^{-1}$. The following claim shows that this implies
that $e_{H'}(X)$ is small.

\begin{claim}\label{Cl:Greedy}
  $e_{H'}(X)\le\frac1{10}Cp^{-1}n$.
  % Either there are fewer than $150\eps^{-2}p^{-1}n$ edges left in $H$ or
  % there are $p^{-1}$ vertex disjoint $75\eps^{-2}p^{-1}$-in-stars.
\end{claim}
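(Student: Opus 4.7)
The plan for Claim~\ref{Cl:Greedy} is a straightforward greedy extraction argument that converts Claim~\ref{Cl:bad} (every $s$-in-star is bad) and Lemma~\ref{Lem:Stars} (few vertex-disjoint bad stars) into a bound on $e_{H'}(X)$. The first step is to observe that Lemma~\ref{Lem:Stars} applies with $A=Y$, since by Claim~\ref{Cl:Sizes} we have $|Y|\ge n/3$, and any $s$-in-star of $H'[X]$ lives in $X\subset V(\Gamma)\setminus Y$. Applying the lemma twice, once with parameter $q$ and once with $\tilde q$ (both lie in the admissible range once one checks $\eps<q,\tilde q<1$ and that $s=10^3\eps^{-2}p^{-1}$ comfortably exceeds $100q^{-1}\eps^{-2}p^{-1}$ because $q,\tilde q\ge 1/4$), gives fewer than $p^{-1}$ pairwise vertex-disjoint $s$-stars in $V(\Gamma)\setminus Y$ that are either $(q,\eps)$- or $(\tilde q,\eps)$-bad for $Y$. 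By Claim~\ref{Cl:bad}, this bounds the number of vertex-disjoint $s$-in-stars in $H'[X]$ by $p^{-1}$.

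Next I would greedily build a maximal vertex-disjoint collection of $s$-in-stars in $H'[X]$ and let $U\subset X$ be the set of vertices they cover. By the previous paragraph the collection has fewer than $p^{-1}$ stars, each on $s+1$ vertices, so $|U|<p^{-1}(s+1)$. The key observation is that by maximality, every vertex $v\in X\setminus U$ has $H'$-in-degree strictly less than $s$ when restricted to $X\setminus U$: otherwise we could form one more vertex-disjoint in-star and extend the collection.

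I then split $e_{H'}(X)$ by whether an edge meets $U$. Edges entirely inside $X\setminus U$: summing in-degrees over $X\setminus U$ gives at most $sn$. Edges with an endpoint in $U$: bounded by $|U|\cdot\Delta_\Gamma\le p^{-1}(s+1)\cdot 2pn=2(s+1)n$, using Lemma~\ref{Lem:Basics}\ref{Basics:Nbs}. Adding these,
\[
e_{H'}(X)\le sn+2(s+1)n\le 4sn=4\cdot 10^3\eps^{-2}p^{-1}n\,,
\]
which by~\eqref{eq:Main:constC} is at most $\tfrac{1}{10}Cp^{-1}n$.

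There is no real obstacle here; the argument is routine once Claim~\ref{Cl:bad} and Lemma~\ref{Lem:Stars} are in hand. The only points requiring a little care are verifying the parameter conditions of Lemma~\ref{Lem:Stars} (done above), remembering to apply it separately for $q$ and for $\tilde q$ so as to capture both kinds of bad in-stars simultaneously, and recording that the bound $|U|\le p^{-1}(s+1)$ multiplied by $\Delta_\Gamma=O(pn)$ produces an $O(sn)$ contribution rather than an $O(p^{-1}sn)$ one — this is where the factor of $p$ in $\Delta_\Gamma$ cancels a factor $p^{-1}$ from $|U|$, and is what makes the final bound $O(\eps^{-2}p^{-1}n)$ rather than the weaker $O(\eps^{-2}p^{-2}n)$ one might naively fear.
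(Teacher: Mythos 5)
Your proof is correct and follows essentially the same route as the paper: both arguments convert the bound of $p^{-1}$ on vertex-disjoint bad (hence, by Claim~\ref{Cl:bad}, all) $s$-in-stars into an $O(sn)$ bound on $e_{H'}(X)$ via a greedy/maximality argument, the only difference being that the paper argues by contradiction (extracting $p^{-1}$ disjoint in-stars from a hypothetical dense $H'[X]$) while you take a maximal disjoint family directly and bound the leftover edges by the in-degree condition plus the degrees of the covered vertices. Your parameter checks for Lemma~\ref{Lem:Stars} and the final numerical comparison with $\frac{1}{10}Cp^{-1}n$ are all in order.
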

\begin{claimproof}[Proof of Claim~\ref{Cl:Greedy}]
  Assume for a contradiction that $e_{H'}(X)>\frac1{10}Cp^{-1}n\ge 10^4\eps^{-2}p^{-1}n$. Using
  a greedy argument, we will show that we then can find more than $p^{-1}$
  stars in $H'[X]$ which are $s$-in-stars (with $s=10^3\eps^{-2}p^{-1}$). Indeed, the average in-degree is at least
  $10^4\eps^{-2}p^{-1}$, so we can find at least one
  $(10^3\eps^{-2}p^{-1})$-in-star.  If we remove from $H'[X]$ this star and
  all edges adjacent to it this accounts for at most $(1+s)(1+\eps)pn\le
  2spn$ edges. So we can repeat this process $p^{-1}$ times, after which at
  most $2sn=2\cdot 10^3\eps^{-2}p^{-1}n$ edges have been deleted from~$H'[X]$,
  hence $H[X]$ still contains more than $10^3\eps^{-2}p^{-1}n$ edges in $X$,
  still giving an average in-degree of at least $10^3\eps^{-2}p^{-1}$, and
  hence we can find another $(10^3\eps^{-2}p^{-1})$-in-star, which is the desired contradiction.
\end{claimproof}

Now~\eqref{eq:eHeH'} and Claim~\ref{Cl:Greedy} imply
$e_H(Y)\le e_H(X)\le\frac12 Cp^{-1}n$, hence~$H$ can be made bipartite
by removing at most~$Cp^{-1}n$ edges as claimed.
\end{proof}

\section{Proof of Theorem~\ref{Thm:ChromaticThreshold}}

The proof of Theorem~\ref{Thm:ChromaticThreshold} adds the techniques
developed for the proof of Theorem~\ref{Thm:Main} to ideas used in~\cite{ABGKM,Lyle}. 
% As with Theorem~\ref{Thm:Main} for any $c>0$ and $p\leq cn^{-1/2}$ the
% result holds so we aim to find $c$ and $C$ such that the result holds for
% $p\geq cn^{-1/2}$.
Our strategy is as follows. Given a subgraph $H$ of $\Gamma=G(n,p)$ with
$\delta(H)\ge\big(\tfrac{1}{3}+\gamma\big)pn$, we will apply the sparse
regularity lemma to obtain a regular partition $V(H)=V_0\cup\dots\cup
V_t$ with $(\eps,d,p)$-reduced graph $R$. We let $W$ be the set of all vertices whose degree to some set $V_i$ is far from the expected $p|V_i|$, and then for each $I\subset[t]$ we let $N_I$ be the subset of vertices in $V(H)\setminus W$ with many neighbours in exactly the clusters $\{V_i:i\in I\}$, which gives a partition of $V(H)$ into $2^t+1$ sets. We will
show that there are $O(p^{-1}n)$ edges in $W$ and in each $N_I$, hence we can remove
all such edges to obtain a graph with bounded chromatic number. We do this by showing that $W$ is too small to contain many edges, and that the same is true for any $N_I$ such that $R[I]$ contains an edge. If on the other hand $R[I]$ is independent, we use an argument similar to that in the
proof of Theorem~\ref{Thm:Main}.

\begin{proof}[Proof of Theorem~\ref{Thm:ChromaticThreshold}]
Given $\gamma>0$, let
\begin{equation}\label{eq:ChromThresh:deps'}
 d=\frac{\gamma}{20}\,,\quad 
 \eps'=\frac{d^2}{30}\,, \quad
 \beta=\frac{1}{3}+\gamma\,, \quad 
 t_0=\frac{1}{\eps'}\,.
\end{equation}
Let $\eps_0$, $C_{\sublem{Lem:RIpart}}$ be the outputs if
Lemma~\ref{Lem:RIpart} is applied with $\eps'$ and $d$.  We take
$\eps=\min\{\eps_0,\eps'\}$ and let $t_1$ be the output if
Lemma~\ref{Lem:MinDegSparseReg} is applied with $\beta$, $\eps$ and $t_0$.
We require as well that $t_1 \geq 10$.  We choose
$c=2C_{\sublem{Lem:RIpart}}t_1$ (which is needed for the application of
Lemma~\ref{Lem:RIpart}). 
Finally we choose
\begin{equation}\label{eq:ChromThresh:rC}
  M=2t_1\,,\quad
  r=2^{t_1}+1\,,\quad 
  C'=10^4\cdot 2^{10t_1}\eps^{-3}\,,\quad
  C=\max(rC'^2,c^2)\,.
\end{equation}

As in the proof of Theorem~\ref{Thm:Main}, if $p\le n^{-7/4}$ a.a.s.\ $G(n,p)$ is bipartite and the statement is trivially true, while for any graph $G$ a maximum $r$-partition of $G$ contains at least $\tfrac{r-1}{r}e(G)$ edges, so that when $p\ge n^{-7/4}$ a.a.s.\ we can make any subgraph of $G(n,p)$ $r$-partite by deleting at most $\big(\tfrac{1}{2r}+\gamma\big)pn^2$ edges. Again, this leaves the hard case when $p\ge cn^{-1/2}$.

Now sample $\Gamma=G(n,p)$.
Since $p>cn^{-1/2}=\omega(\frac{\ln n}{n})$ we can assume that $\Gamma$
satisfies the properties of
Lemmas~\ref{Lem:MinDegSparseReg}, \ref{Lem:Basics}, \ref{Lem:Atypical},
and~\ref{Lem:Stars} with the parameters chosen above.

Let $H$ be a triangle-free spanning subgraph of $\Gamma$ with
$\delta(H)\geq \big(\frac{1}{3}+\gamma\big)n$.  By
Lemma~\ref{Lem:MinDegSparseReg} there is an $(\eps,d,p)$-regular partition
$V_0\cup V_1\cup\dots\cup V_t$ of $H$ with $t\le t_1$ such that the reduced
graph~$R$ has $\delta(R)\geq \big(\frac{1}{3} + \gamma
-d-3\eps\big)v(R)\geq\big(\frac{1}{3}+\frac{\gamma}{2}\big)v(R)$, and such that for each $i$ and each $v\in V_i$, the vertex $v$ has at most $(d+\eps)pn$ neighbours in $\bigcup_{j:ij\not\in R}V_j$.

Let $W$ consist of all vertices which either have more than $(1+\eps)p|V_i|$ neighbours in $V_i$ for some $i$, or more than $2\eps p n$ neighbours in $V_0$. By Lemma~\ref{Lem:Basics}\ref{Atypical:Vertices} we have $|W|\le 10M(t+1)\eps^{-2}p^{-1}$, and by Lemma~\ref{Lem:Basics}\ref{Basics:Sets} the number of edges in $W$ is therefore at most $\max\big(100M^2(t+1)^2\eps^{-4}p^{-1},9n\big)\le 10p^{-1}n$, where the inequality holds for all sufficiently large $n$. Now for each $I\subset [t]$, let $N_I$ be the set of vertices of~$H$
with many neighbours exactly in the clusters~$V_i$ with $i\in I$, that is,
\[N_I=\{v\in V(H)\colon|N(v)\cap V_i|>10dp|V_i| \text{ if and only if } i\in I\}\,.\]
\begin{claim}\label{Cla:NoSmall}
  $\{N_I\colon|I|>\frac{t}{3}\}$ partitions $V(H)\setminus W$.  
\end{claim}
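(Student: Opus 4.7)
The proof plan splits into two parts: (i) $\{N_I\}_{I\subseteq[t]}$ is a partition of $V(H)$, and (ii) for every $v\in V(H)\setminus W$, the unique index set $I$ with $v\in N_I$ satisfies $|I|>t/3$. Part (i) is immediate from the definition of $N_I$: each vertex $v\in V(H)$ belongs to exactly one $N_I$, namely the one associated with $I_v=\{i\in[t]\colon|N(v)\cap V_i|>10dp|V_i|\}$.

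For part (ii), fix $v\in V(H)\setminus W$ and let $I=I_v$. The plan is to bound $\deg_H(v)$ from above by splitting this degree according to $V_0$, the clusters $V_j$ with $j\in I$, and the clusters $V_j$ with $j\in[t]\setminus I$, and then compare the result with the minimum degree hypothesis $\deg_H(v)\geq(\tfrac13+\gamma)pn$. Since $v\notin W$, the definition of $W$ gives $\deg_H(v,V_0)\leq 2\eps pn$ and $\deg_H(v,V_j)\leq(1+\eps)p|V_j|$ for every $j\in[t]$. Moreover, the definition of $I$ yields $\deg_H(v,V_j)\leq 10dp|V_j|$ for $j\in[t]\setminus I$.

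Writing $m=|V_j|$ for $j\geq 1$ (so $tm\leq n$ and $|V_0|\leq\eps n$), summing these bounds and comparing with the minimum degree gives
\[\Big(\tfrac13+\gamma\Big)pn \;\leq\; 2\eps pn + (1+\eps)p|I|m + 10dp(t-|I|)m.\]
Dividing by $pm$ and using $n/m\geq t$, this rearranges to
\[(1+\eps-10d)\,|I| \;\geq\; \Big(\tfrac13+\gamma-2\eps-10d\Big)\,t.\]
Plugging in $d=\gamma/20$ (so $10d=\gamma/2$) and using that $\eps\leq\eps'=d^2/30$ is negligible compared to $\gamma$, a direct calculation gives $|I|>t/3$, which completes the proof.

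The only obstacle is routine bookkeeping of the constants $\eps$, $d$, and $\gamma$; the choices in~\eqref{eq:ChromThresh:deps'} were made precisely to leave room in this last inequality (the resulting condition is essentially $2\gamma-7\eps>0$). No additional tools beyond the definitions of $W$ and $N_I$ and the minimum degree of $H$ are needed.
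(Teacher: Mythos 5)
Your proof is correct and follows essentially the same route as the paper: both arguments use $v\notin W$ to bound $\deg(v,V_0)$ and $\deg(v,V_i)$, the definition of $N_I$ to bound $\deg(v,V_j)$ for $j\notin I$, and then compare the sum against the minimum degree $(\tfrac13+\gamma)pn$ to force $|I|>t/3$ (the paper phrases this as a contradiction for $|I|\le t/3$, you phrase it as a direct rearrangement, but the computation is identical). The constant bookkeeping you defer is indeed routine, since $\eps\le d^2/30$ is negligible against $\gamma$.
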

\begin{claimproof}
  The sets $\{N_I:I \subset [t]\}$ are disjoint and
  partition $V(H)\setminus W$ by definition. If $|I|\leq
  \frac{t}{3}$ then any vertex $v\in N_I$ has at most $\sum_{i\in I}(1+\eps)p|V_i|+\sum_{i\not\in I}10dp|V_i|+2\eps p n<\big(\frac13+\gamma\big)pn$ neighbours since $v\not\in W$ and by definition of $N_I$, which is a contradiction, so $N_I=\emptyset$ if $|I|\le\tfrac{t}{3}$.
\end{claimproof}

Our goal is thus to show that $e_H(N_I)\le C'^2 p^{-1}n$ for any $I$ with
$|I|>\frac{t}{3}$, since this implies that $H$ can be made $r$-partite with
$r=2^{t_1+1}$ by removing at most $rC'^2 p^{-1}n\leq Cp^{-1}n$ edges. This is
established by the following two claims.

\begin{claim}\label{Cl:bigI}
  If $R[I]$ contains an edge, then $e_H(N_I)\le C'^2 p^{-1}n$.
\end{claim}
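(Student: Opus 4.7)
The plan is to exploit the edge $ij\in R[I]$ together with the triangle-freeness of~$H$ via regularity inheritance. The idea is that any vertex $v\in N_I$ has many $H$-neighbours in both $V_i$ and~$V_j$, and since $H$ is triangle-free these two neighbourhoods span no $H$-edges. If $v$ additionally inherits $(\eps',d,p)$-lower-regularity on the pair $(V_i,V_j)$, the density between these neighbourhoods would be forced to be at least $(1-\eps')dp>0$, a contradiction. Hence $N_I$ can contain only vertices where regularity inheritance fails, of which there are very few.

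Carrying this out, I first invoke Lemma~\ref{Lem:RIpart} (with parameters $\eps'$ and $d$) to obtain a set $U\subset V(H)$ of size at most $\binom{t_1}{2}C_{\sublem{Lem:RIpart}}\max\{p^{-2},p^{-1}\log n\}$ such that for every $v\notin U$ and every edge $i'j'\in R$, the pair $(N_\Gamma(v)\cap V_{i'},N_\Gamma(v)\cap V_{j'})$ is $(\eps',d,p)$-lower-regular in~$H$. I then show $N_I\subset U$. Suppose for contradiction that $v\in N_I\setminus U$. Since $\{N_{I'}\colon I'\subset[t]\}$ partitions $V(H)\setminus W$, we have $v\notin W$, so $|N_\Gamma(v)\cap V_i|\leq (1+\eps)p|V_i|$; combined with $v\in N_I$ and $i\in I$, this gives
\[|N_H(v)\cap V_i|>10dp|V_i|\geq \eps'|N_\Gamma(v)\cap V_i|,\]
because $\eps'=d^2/30\ll 10d/(1+\eps)$, and the analogous inequality holds with $V_j$ in place of $V_i$. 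Lower-regularity then produces an $H$-edge between $N_H(v)\cap V_i$ and $N_H(v)\cap V_j$, yielding a triangle with~$v$ in~$H$ and the desired contradiction.

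With $|N_I|\leq|U|$ in hand, Lemma~\ref{Lem:Basics}\ref{Basics:Sets} yields $e_H(N_I)\leq e_\Gamma(N_I)\leq\max\{|N_I|^2p,9n\}$. The bound $p\geq cn^{-1/2}=2C_{\sublem{Lem:RIpart}}t_1n^{-1/2}$ lets one rewrite $p^{-3}=p^{-1}\cdot p^{-2}\leq p^{-1}n/(4C_{\sublem{Lem:RIpart}}^2t_1^2)$, and for large $n$ one has $\log^2 n\leq n$, so a short calculation shows $|N_I|^2p$ is at most a constant (depending only on $t_1$) times $p^{-1}n$, comfortably below $C'^2p^{-1}n$; the term $9n$ is also dominated by $C'^2p^{-1}n$.

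The one subtle point is that invoking lower-regularity requires $|N_H(v)\cap V_i|$ to be a sufficiently large fraction of $|N_\Gamma(v)\cap V_i|$, which is exactly what the hierarchy $\eps'\ll d$ together with $v\notin W$ ensures. This is also why this argument only works when $R[I]$ contains a regular pair, and a separate, more involved, argument (using bad stars in the spirit of the proof of Theorem~\ref{Thm:Main}) will be needed for the complementary case in which $R[I]$ is an independent set.
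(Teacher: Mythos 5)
Your proof is correct and follows essentially the same route as the paper's: use Lemma~\ref{Lem:RIpart} plus the triangle-freeness of $H$ to show every vertex of $N_I$ must fail regularity inheritance on the pair $(V_i,V_j)$ (the paper phrases the key step via the Slicing Lemma applied to $(N_\Gamma(v,V_i),N_\Gamma(v,V_j))$, while you apply the definition of lower-regularity directly to the subsets $N_H(v)\cap V_i$ and $N_H(v)\cap V_j$ — the same computation), and then bound $e_H(N_I)$ via Lemma~\ref{Lem:Basics}\ref{Basics:Sets} and $p\ge cn^{-1/2}$.
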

\begin{claimproof}[Proof of Claim~\ref{Cl:bigI}]
 Suppose that $ij\in R[I]$. If $v\in N_I$ is such that $\big(N_\Gamma(v,V_i),N_\Gamma(v,V_j)\big)$ is $(\eps',d,p)$-lower-regular in $H$. Since $v\not\in W$, the pair $\big(N_H(v,V_i),N_H(v,V_j)\big)$ is $\big(\eps'\tfrac{1+\eps}{10d},d,p\big)$-lower-regular in $H$. Since $d>\eps'\tfrac{1+\eps}{10d}$, there is an edge of $H$ in this latter pair and hence $H$ contains a triangle, a contradiction.
  
  We conclude that there are no such vertices in $N_I$, so by Lemma~\ref{Lem:RIpart} we have $|N_I|\le C'\max\big(p^{-2},p^{-1}\log n\big)$. By Lemma~\ref{Lem:Basics}\ref{Basics:Sets} the number of edges in $N_I$ is therefore at most $\max\big(C'^2 p^{-3},C'^2 p^{-1}\log^2 n,9n\big)\le C'^2 p^{-1}n$ by choice of $p$ and $C'$.
\end{claimproof}

\begin{claim} 
\label{Cl:smallI}
  If $R[I]$ is independent, then $e_H(N_I)\le C'p^{-1}n$.
\end{claim}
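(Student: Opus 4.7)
My plan is to adapt the in-star argument of Theorem~\ref{Thm:Main}, taking $Y_I := \bigcup_{i \in I} V_i$ in place of the cut side $Y$. First, since $\delta(R) \ge (\tfrac{1}{3} + \tfrac{\gamma}{2})v(R)$ and $R[I]$ is independent, every $i \in I$ has at least $(\tfrac{1}{3} + \tfrac{\gamma}{2})t$ $R$-neighbors in $[t] \setminus I$, forcing $|I| \le (\tfrac{2}{3} - \tfrac{\gamma}{2})t$ and thus $|Y_I| \le (\tfrac{2}{3} - \tfrac{\gamma}{2})n$; this upper bound drives the final contradiction. Moreover, for every $v \in N_I$ the definition of $N_I$ together with $v \notin W$ gives $\deg_H(v, V \setminus Y_I) \le (10d + 2\eps)pn$. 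Using the `furthermore' clause of Lemma~\ref{Lem:MinDegSparseReg}, which yields $\deg_H(v, Y_I \setminus V_i) \le (d + \eps)pn$ whenever $v \in V_i$ with $i \in I$ (since $R[I]$ is independent), a short calculation shows $N_I \cap Y_I = \emptyset$: otherwise summing these bounds with $\deg_H(v, V_i) \le (1+\eps)p\eps' n$ yields $\deg_H(v) \le (11d + O(\eps))pn < (\tfrac{1}{3} + \gamma)pn$, violating $\delta(H)$.

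Next I apply Lemma~\ref{Lem:Atypical} with $A = B = Y_I$ (valid since $|Y_I| \ge n/M$) to remove the at most $10^3 M \eps^{-2} p^{-1} n$ atypical edges from $H[N_I]$, obtaining $H' \subset H[N_I]$ in which every edge $uv$ satisfies $\deg_\Gamma(u, v, Y_I) \ge (1-\eps)p^2|Y_I|$. For each such edge, triangle-freeness of $H$ forces each $y \in N_\Gamma(u, v, Y_I)$ to miss at least one of $uy, vy$ in $H$, so one of $u, v$ loses at least half of these common $\Gamma$-neighbors in $H$; I direct $uv$ toward that endpoint.

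The key step, mirroring Claim~\ref{Cl:bad}, is to show that with $s = 10^3 \eps^{-2} p^{-1}$ and $q = \tfrac{1 - 3\eps}{2}$, every $s$-in-star in $H'$ is $(q, \eps)$-bad for $Y_I$. Given an $s$-in-star at $u$ that is not bad, the candidate witness $S = N_\Gamma(u, Y_I) \setminus N_H(u)$ must satisfy $|S| > q p |Y_I|$ (else each leaf's $\Gamma$-degree into $S$, being at least $\tfrac{1}{2}(1-\eps)p^2|Y_I| \ge (1+\eps)qp^2|Y_I|$, would make $S$ a witness). This forces $\deg_H(u, Y_I) \le (1 + \eps - q)p|Y_I| = (\tfrac{1}{2} + \tfrac{5\eps}{2}) p|Y_I|$. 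Combining this with $\deg_H(u, V \setminus Y_I) \le (10d + 2\eps)pn$, $|Y_I| \le (\tfrac{2}{3} - \tfrac{\gamma}{2})n$, and $d = \tfrac{\gamma}{20}$, a direct arithmetic check yields $\deg_H(u) \le (\tfrac{1}{3} + \tfrac{\gamma}{4} + O(\eps))pn < (\tfrac{1}{3} + \gamma)pn$, contradicting $\delta(H)$. This tight arithmetic, in which the bound $|Y_I| \le (\tfrac{2}{3} - \tfrac{\gamma}{2})n$ coming from $R[I]$ being independent is essential, is the main obstacle.

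Finally, Lemma~\ref{Lem:Stars} bounds the number of vertex disjoint $(q, \eps)$-bad $s$-stars in $V(\Gamma) \setminus Y_I$ by fewer than $\tfrac{1}{2} p^{-1}$, and since $N_I \subset V(\Gamma) \setminus Y_I$, a greedy argument identical in spirit to Claim~\ref{Cl:Greedy} shows that if $e(H')$ exceeds $O(\eps^{-2} p^{-1} n)$ one could extract $p^{-1}$ vertex disjoint $s$-in-stars, a contradiction. Adding back the atypical edges yields $e_H(N_I) \le e(H') + 10^3 M \eps^{-2} p^{-1} n \le C' p^{-1} n$ by the choice of $C'$.
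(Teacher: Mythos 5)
Your proof is correct and follows essentially the same strategy as the paper's: establish that $N_I$ is disjoint from $\bigcup_{i\in I}V_i$, delete atypical edges via Lemma~\ref{Lem:Atypical}, orient the remaining edges by which endpoint loses more common $\Gamma$-neighbours, show every $s$-in-star is $(q,\eps)$-bad for $\bigcup_{i\in I}V_i$ via the minimum-degree contradiction, and finish with Lemma~\ref{Lem:Stars} and a greedy extraction of disjoint in-stars. The only (harmless) deviation is that by taking the witness set $S=N_\Gamma(u,Y_I)\setminus N_H(u)$ and bounding $\deg_H(u,V\setminus Y_I)$ directly from the definition of $N_I$, you avoid the paper's auxiliary exceptional set $Z$ of vertices that lose many edges when the atypical edges are removed.
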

\begin{claimproof}[Proof of Claim~\ref{Cl:smallI}]
  Since $\delta(R)\ge\big(\tfrac13+\tfrac\gamma2\big)t$, if $R[I]$ is independent then $|I|<\tfrac{2t}{3}$. Let $S_I:=\bigcup_{i\in I}V_i$. We first show that $S_I$ and $N_I$ are disjoint. Indeed, if $v\in N_i$ were in some $V_i$ with $i\in I$, then by definition of $N_I$ the vertex $v$ has at least $\sum_{j\in I}10dp|V_j|\ge 5dpn/3$ neighbours in $\bigcup_{j\in I}V_j$, where the inequality follows since $|I|>t/3$. Since $ij$ is not an edge of $R$ for any $j\in I$, this is in contradiction to the guarantee that $v$ has at most $(d+\eps)pn$ neighbours in $\bigcup_{j:ij\not\in R}V_j$.
  
  We now delete some `atypical' edges from~$H[N_I]$.
  Remove from $H[N_I]$ each edge $uv$ with $\deg_\Gamma(u,v,S_I)<(1-\eps)|S_I|p^2$. to obtain the graph $H'$.
  By Lemma~\ref{Lem:Atypical} this accounts for at most $10^3\cdot
  4\eps^{-2} p^{-1}n \le \frac{\eps}{10}C'p^{-1}n$ edges. 
  
  Let $Z$ be the set of vertices $v\in N_I$ such that $\deg_H(v)-\deg_{H'}(v)\ge\eps p n$. By double counting we have $|Z|\le \frac{\eps C'p^{-1}n}{5\eps p n}=\tfrac15 C'p^{-2}$.

  We now proceed similarly as in the proof of Theorem~\ref{Thm:Main}. We orient the edges $uv$ in $H'[N_I]$ towards~$u$
  if $|N_\Gamma(u,v,S_I) \setminus N_{H'}(u,S_I)|\ge\frac12 \deg_\Gamma(u,v,S_I)$ and towards~$v$ otherwise. 
  Again, for $s=10^3q^{-1}\eps^{-2}p^{-1}$ and $q=(1-2\eps)\frac12$ any
  $s$-in-star with centre~$x$ not in $Z$ is $(q,\eps)$-bad with respect to $S_I$. Indeed, otherwise,
  analogously to the proof of~\eqref{eq:Main:in-star}, we have
  $|N_\Gamma(x,S_I)\setminus N_{H'}(x,S_I)|>qp|S_I|$, which implies 
  \begin{equation*}
    \deg_{H'}(x,S_I)< (1+\eps)p|S_I|-qp|S_I|=\frac{1}{2}p|S_I|\leq 
    \frac12 p \frac23 n = \frac13pn
  \end{equation*}
  Since $x\not\in Z$, we have $\deg_H(x)\le\deg_{H'}(x)+\eps pn<\big(\tfrac13+\gamma)pn$,
  a contradiction.
  
  We now pick greedily vertex disjoint $s$-in-stars whose centres are not in $Z$ until no more remain. By Lemma~\ref{Lem:Stars}, since $S_I$ and $N_I$ are disjoint, this process terminates having found less than $\frac12p^{-1}$ such stars. Let $Y$ be the set of vertices contained in all these stars; then $|Y|\le\frac12p^{-1}s\le 10^3q^{-1}\eps^{-2}p^{-2}$. Now $e_{H'}\big(N_I\setminus (Y\cup Z)\big)\le s|N_I|$ since $N_I\setminus (Y\cup Z)$ contains no $s$-in-star, so we conclude
  \[e_H(N_I)\le (1+\eps)pn|Y\cup Z|+s|N_I|+\tfrac{1}{10}C'p^{-1}n\le C'p^{-1}n\,,\]
  as desired.
\end{claimproof}
 Finally, these claims show that deleting all edges internal to any of the sets $W$ and $N_I$ for $I\subset [t]$ yields a $2^t+1=r$-partite graph, and that the number of edges deleted is at most $Cp^{-1}n$, as desired.
\end{proof}

\bibliographystyle{amsplain}
\bibliography{Ref}

\end{document}